\newcommand{\pdim}{\mathrm{pdim}}
\newcommand{\ext}{\mathrm{Ext}}
\newcommand{\tor}{\mathrm{Tor}}
\newcommand{\HH}{HH}
\newcommand{\Homo}{H}
\newcommand{\Hom}{\mathrm{Hom}}
\newcommand{\cd}{\mathrm{cd}}
\newcommand{\Vect}{\mathrm{Vect}}
\newcommand{\Ker}{\mathrm{Ker}}
\newcommand{\id}{\mathrm{id}}
\newtheorem{thm}{Theorem}[section]
\newtheorem{coro}[thm]{Corollary}
\newtheorem{lemma}[thm]{Lemma}
\newtheorem{prop}[thm]{Proposition}
\theoremstyle{remark}
\newtheorem{rem}[thm]{Remark}
\newtheorem{ex}[thm]{Example}
\theoremstyle{definition}
\newtheorem{defi}[thm]{Definition}
\newtheorem*{nota}{Notations}
\title{Homological duality and exact sequences of Hopf algebras}
\date{}
\author{Julian Le Clainche\thanks{Université Clermont Auvergne, CNRS, LMBP, F-63000 CLERMONT-FERRAND, FRANCE}}
\begin{document}

\maketitle

\begin{abstract}
We study the stability of homological duality properties of Hopf algebras under extensions.
\end{abstract}


\section{Introduction}\label{Introduction}

Homological duality is a general phenomenon producing a duality between the cohomology and homology spaces of an algebraic system, analogous to Poincaré duality for closed manifolds in algebraic topology. A first axiomatization is due to Bieri-Eckmann \cite{bieri_groups_1973} in group cohomology, while  homological duality in Hochschild cohomology for algebras arose in Van den Bergh's work \cite{van_den_bergh_relation_1998,van_den_bergh_erratum_2002}. Since then, many developments have followed, including the important case of (twisted) Calabi-Yau algebras \cite{ginzburg_calabi-yau_nodate}.

Homological duality for groups is known to be well-behaved under extensions \cite{bieri_groups_1973,bie76}. Our main result, which produces new examples of (Hopf) algebras having homological duality, generalizes these results of Bieri-Eckmann to the setting of exact sequences of Hopf algebras, as follows, where $\cd$ denotes the Hochschild cohomological dimension of an algebra. 




\begin{thm}\label{th seq}
    Let $k \longrightarrow B \overset{i}{\longrightarrow} A \overset{p}{\longrightarrow} H \longrightarrow k$ be an exact sequence of Hopf algebras with bijective antipodes. Assume that $A$ is faithfully flat as a left and right $B$-module, and that $H$ and $B$ are smooth algebras. Then the following assertions are equivalent:
    \begin{enumerate}[label = (\roman*)]
        \item $B$ and $H$ have homological duality,
        \item $A$ has homological duality.
    \end{enumerate}    
If these conditions hold, we have
$\cd (A) = \cd(B) + \cd(H).$
\end{thm}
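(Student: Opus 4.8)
The plan is to reduce homological duality to the Artin--Schelter condition and then transport it across the extension through a Lyndon--Hochschild--Serre spectral sequence. First I would recall the homological dictionary: for a Noetherian Hopf algebra with bijective antipode, homological duality (Van den Bergh duality, equivalently twisted Calabi--Yau) is equivalent to being Artin--Schelter regular, that is, $A$ is smooth and $\ext^i_A(k,A)$ vanishes for $i\neq\cd(A)$ and is one-dimensional, say $k_\xi$ for a character $\xi$, in degree $\cd(A)$. Since $B$ and $H$ are smooth by hypothesis, assertion (i) is equivalent to ``$B$ and $H$ are AS-Gorenstein'' and (ii) to ``$A$ is smooth and AS-Gorenstein''. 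The whole argument therefore revolves around computing $\ext^*_A(k,A)$ out of the corresponding data for $B$ and $H$.

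The main tool I would construct is a spectral sequence of Lyndon--Hochschild--Serre type. The key point is the iterated-invariants identity $\Hom_A(k,M)=\Hom_H\bigl(k,\Hom_B(k,M)\bigr)$, valid because $M^A=(M^B)^H$, where normality of $B$ in $A$ equips $\Hom_B(k,M)=M^B$ with an $H$-module structure. This exhibits $\Hom_A(k,-)$ as the composite ${}_A\mathcal{M}\to{}_H\mathcal{M}\to\Vect$. Faithful flatness of $A$ over $B$ is exactly what makes restriction send injective $A$-modules to $\Hom_H(k,-)$-acyclic $H$-modules, via the (co)induction adjunction, so Grothendieck's theorem yields
$$E_2^{p,q}=\ext^p_H\bigl(k,\ext^q_B(k,M)\bigr)\;\Longrightarrow\;\ext^{p+q}_A(k,M).$$
Since $\cd(B)$ and $\cd(H)$ are finite, this spectral sequence lives in a finite box, giving at once $\cd(A)\le\cd(B)+\cd(H)<\infty$; in particular smoothness of $B$ and $H$ already forces smoothness of $A$.

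For (i) $\Rightarrow$ (ii) and the dimension formula I would set $M=A$. Using a finite resolution of $k$ by finitely generated projective $B$-modules (smoothness) together with faithful flatness to realise $A$ as an induced $B$-module, I obtain $\ext^q_B(k,A)\cong\ext^q_B(k,B)\otimes H$ compatibly with the $H$-actions. If $B$ is AS-Gorenstein this is concentrated in $q=\cd(B)$ and is a twist ${}^{\chi}H$ of the rank-one free $H$-module, so the spectral sequence collapses to $\ext^{p+\cd(B)}_A(k,A)\cong\ext^p_H(k,{}^{\chi}H)$. The AS-Gorenstein property of $H$ then makes this vanish unless $p=\cd(H)$, where it is one-dimensional. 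Hence $\ext^*_A(k,A)$ is concentrated in degree $\cd(B)+\cd(H)$ and one-dimensional there, so $A$ is AS-Gorenstein; being already smooth, $A$ has homological duality, and the non-vanishing in top degree gives $\cd(A)=\cd(B)+\cd(H)$.

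The converse (ii) $\Rightarrow$ (i) is where I expect the real difficulty. Here $A$ has duality and $B,H$ are only known to be smooth, and one must recover the AS-Gorenstein property of each factor. Running the same spectral sequence with $M=A$, the abutment $\ext^*_A(k,A)$ is concentrated in a single degree and one-dimensional, while $E_2^{p,q}=\ext^p_H\bigl(k,\ext^q_B(k,B)\otimes H\bigr)$ sits in the finite rectangle $0\le p\le\cd(H)$, $0\le q\le\cd(B)$. The extreme corner receives and emits no differentials, so it survives to $E_\infty$; comparing with the one-dimensional abutment pins down $\ext^{\cd(B)}_B(k,B)$ and $\ext^{\cd(H)}_H(k,H)$ as one-dimensional and identifies $\cd(A)$ with $\cd(B)+\cd(H)$. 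The hard part is then to kill all lower terms: vanishing of the abutment off the top degree does not by itself annihilate the lower $E_2$-entries, since differentials could cancel them in pairs. I would resolve this by a rigidity argument exploiting precisely the smoothness of $B$ and $H$ (finite resolutions by finitely generated projectives) --- for instance an Euler-characteristic count across the box, or an inductive descent from the surviving corner --- to conclude that each factor is AS-Gorenstein. This corner-survival plus rigidity step is the crux of the theorem and the precise place where the smoothness hypotheses on $B$ and $H$ become indispensable.
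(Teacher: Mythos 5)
Your overall strategy --- the Lyndon--Hochschild--Serre spectral sequence $E_2^{p,q}=\ext^p_H(k,\ext^q_B(k,M))\Rightarrow\ext^{p+q}_A(k,M)$, the identification $\ext^q_B(k,A)\simeq\ext^q_B(k,B)\otimes H$ as $H$-modules, and corner arguments in the resulting finite box --- is exactly the paper's. Two problems remain. First, a definitional one: the paper's ``homological duality'' for a Hopf algebra only asks that $\ext^i(k,\cdot)$ be concentrated in a single degree (Theorem \ref{duality Hopf}); it does \emph{not} require the top Ext space to be one-dimensional. Your reduction to ``AS-Gorenstein with one-dimensional top Ext'' is therefore strictly stronger than the statement: under hypothesis (ii) the abutment $\ext^{\cd(A)}_A(k,A)$ need not be one-dimensional, so your step ``comparing with the one-dimensional abutment pins down the factors as one-dimensional'' starts from a hypothesis you are not given. (One-dimensionality is the twisted Calabi--Yau case, treated separately in Corollary \ref{coro:tCY}.) This is repairable --- everywhere you invoke one-dimensionality you only need nonvanishing, since $\ext^q_B(k,B)\otimes H$ is still a direct sum of copies of $H$ and $\ext^p_H(k,-)$ commutes with direct sums by smoothness --- but as written both directions address a different theorem.

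Second, and more seriously, you explicitly leave the crux of (ii)$\Rightarrow$(i) unresolved: ``an Euler-characteristic count across the box, or an inductive descent from the surviving corner'' is not an argument (an Euler characteristic cannot detect vanishing, and the Ext spaces need not be finite-dimensional). The gap closes with a second corner argument for which you already have every ingredient. Let $r$ (resp.\ $s$) be the \emph{smallest} integer with $\ext^r_H(k,H)\neq0$ (resp.\ $\ext^s_B(k,B)\neq0$). Because $E_2^{p,q}\simeq\ext^q_B(k,B)\otimes\ext^p_H(k,H)$ is a tensor product, $E_2^{p,q}=0$ whenever $p<r$ or $q<s$; hence every differential into or out of position $(r,s)$ vanishes on every page, $E_\infty^{r,s}=E_2^{r,s}\neq0$, and $(r,s)$ is the only nonzero position in total degree $r+s$, so $\ext^{r+s}_A(k,A)\neq0$. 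Duality for $A$ forces $r+s=\cd(A)$, while your top-corner argument (or the paper's Theorem \ref{coro : cd equality}, which needs only smoothness of $B$ and finiteness of $\cd(H)$) gives $\cd(A)=\cd(B)+\cd(H)$; since $r\leq\cd(H)$ and $s\leq\cd(B)$, this forces $r=\cd(H)$ and $s=\cd(B)$, i.e.\ $\ext^\bullet_H(k,H)$ and $\ext^\bullet_B(k,B)$ are concentrated in top degree, which is exactly assertion (i). The cancellation of differentials you worry about cannot occur, precisely because the tensor structure of the $E_2$-page kills entire rows and columns at once rather than isolated entries.
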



Apart from a smoothness criterion due to Bieri-Eckmann \cite{Bieri_Eckmann74}, the main tools we use to prove Theorem \ref{th seq} are natural Hopf algebraic generalizations of the Lyndon-Hochschild-Serre spectral sequences. These spectral sequences are not new, because they are special cases of those of Stefan \cite{stefan_hochschild_1995} in the more general setting of Hopf-Galois extensions, but we work them in detail, on the one hand for the sake of completeness, and on the other hand because there are some specific technicalities  (notably regarding certain $H$-actions) that are not directly viewable, or that we have not been able to view, in \cite{stefan_hochschild_1995}, and are  needed in the proof of Theorem \ref{th seq}.

During the last phase of writing this paper, we were informed of the recent work of R. Zhu \cite{zhu}, where a result similar to the implication (i) $\Rightarrow$ (ii) in Theorem \ref{th seq} is proved (Theorem 2.15 in \cite{zhu}), in the more general framework of Hopf-Galois extensions. We still think that there is some interest in our direct approach in the case of exact sequences of Hopf algebras, on one hand because many considerations can be made more explicit than in the general Hopf-Galois case, and on the other hand because we deal with a more general version of homological duality (as suggested by the duality groups of Bieri-Eckmann) than the twisted Calabi-Yau or Van den Bergh duality considered in \cite{zhu}.

The paper is organized as follows. Section 2 consists of reminders and preliminaries about Hochschild (co)homology and homological duality. In Section 3 we recall the definition of exact sequences of Hopf algebras and we introduce the generalized Lyndon-Hochschild-Serre spectral sequences in Hopf algebra (co)homology. In Section 4 we establish a smoothness result for a Hopf algebras involved in an exact sequence. Section 5 is devoted to the proof of Theorem \ref{th seq}. In the final Section 6 we present some examples and applications of Theorem \ref{th seq}.

\begin{nota}
Throughout the paper, we work over a field $k$, and all algebras are (unital) $k$-algebras.
If $A$ is an algebra, the opposite algebra is denoted $A^{op}$ and the enveloping algebra $A\otimes A^{op}$ is denoted $A^e$.
The category of right (resp. left) $A$-modules is denoted $\mathcal{M}_{A}$ (resp. ${}_A \mathcal{M}$).
We have $\mathcal M_A={_{A^{op}}\mathcal{M}}$, hence any definition we give for left modules has an obvious analogue for right modules, that we will not necessarily give.
An $A$-bimodule structure is equivalent to a left (or right) $A^e$-module structure. Indeed, if $M$ is an $A$-bimodule, a left (resp. right) $A^e$-module structure on $M$ is defined by $$(a\otimes b) \cdot m = amb \text{ (resp. } m\cdot (a\otimes b) = bma\text{ )} \quad \mathrm{for} \quad a,b\in A \quad \mathrm{and}\quad m\in M.$$
The  category of $A$-bimodule is thus identified with the category $\mathcal{M}_{A^e}$, or with the category ${}_{A^e}\mathcal{M}$.

If $M$ and $N$ are left $A$-modules, $\ext$ spaces (\cite{weibel_introduction_2008}) are denoted $\ext_A^\bullet (M,N)$ and if $P$ is a right $A$-module $\tor$ spaces are denoted $\tor^A_\bullet(P,M)$. 
We will also need to consider $\ext$ spaces in categories of right modules, which we denote $\ext^\bullet_{A^{op}}(-,-)$.

 

If $H$ is a Hopf algebra, its comultiplication, counit and antipode are  denoted $\Delta, ~ \varepsilon$ and $S$ and we will use Sweedler notation in the usual way, i.e. for $h \in H$, we write $\Delta(h)=h_{(1)}\otimes h_{(2)}$. See \cite{montgomery_hopf_1993}.





 

\end{nota}

\section{Hochschild cohomology and homological duality}\label{Hochschild}

In this section, we briefly recall some notations and basic definitions about Hochschild (co)homology and homological duality for algebras.

\subsection{Finiteness conditions on modules}


\begin{defi}
    Let $A$ be an algebra. The \textit{projective dimension} of an $A$-module $M$ is defined by 
        \[\pdim_A(M) := \min\left\{ n \in \mathbb{N},~ M \text{ admits a length} \ n \ \text{resolution by projective } A\text{-modules} \right\} \in \mathbb{N}\cup \{\infty\}.\]
\end{defi}

The projective dimension can as well be characterized by
\begin{align*}
\pdim_A(M) &= \min\left\{ n \in \mathbb{N},~ \ext_A^{n+1}(M,N)=\{0\} \text{ for any $A$-module $N$}  \right\}  \\
& = \max \left\{ n \in \mathbb{N},~ \ext^{n}_A(M,N)\not=\{0\} \text{ for some $A$-module $N$}  \right\} 
\end{align*}
and if $\pdim_A(M)$ if finite, we have as well
\[ \pdim_A(M) = \max \left\{ n \in \mathbb{N},~ \ext^{n}_A(M,F)\not=\{0\} \text{ for some free $A$-module $F$}  \right\}\]

We now recall various finiteness conditions on modules.

\begin{defi}
    Let $A$ be an algebra and let $M$ be an $A$-module. 
    \begin{enumerate}
        \item The $A$-module $M$ is said to be \textit{of type $FP_\infty$} if it admits a projective resolution $P_\bullet \to M$ with $P_i$ finitely generated for all $i$. 
        \item The $A$-module $M$ is said to be \textit{of type FP} if it admits a finitely generated projective resolution of finite length.
    \end{enumerate}
\end{defi}

The following result characterizes modules of type $FP$ among those of type $FP_\infty$, see e.g \cite[Chapter VIII]{brown_cohomology_1982}.

\begin{prop}\label{prop:FP-FPinfty}
Let $A$ be an algebra. An $A$-module $M$ is of type $FP$ if and only if it is of type $FP_\infty$ and $\pdim_A(M)$ is finite. In this case we have     
\[\pdim_A(M) = \max\left\{ n \in \mathbb{N},~ \ext_A^{n}(M,A)\not=\{0\}   \right\}\]
\end{prop}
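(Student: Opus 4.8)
The plan is to treat the two halves of the statement separately, using the $\ext$-characterizations of $\pdim_A$ recalled just above as the principal input, and introducing finite generation of syzygies only where it is genuinely needed.

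For the equivalence, the implication ``type $FP$ $\Rightarrow$ type $FP_\infty$ and $\pdim_A(M)<\infty$'' is immediate: a finitely generated projective resolution of finite length is in particular a resolution by finitely generated projectives, and its length bounds $\pdim_A(M)$. For the converse I would suppose $M$ is of type $FP_\infty$ with $d:=\pdim_A(M)<\infty$, fix a resolution $\cdots \to P_1 \to P_0 \to M \to 0$ with each $P_i$ finitely generated projective, and truncate it at stage $d$. Setting $\Omega := \im(P_d \to P_{d-1}) = \Ker(P_{d-1}\to P_{d-2})$ gives an exact sequence $0 \to \Omega \to P_{d-1} \to \cdots \to P_0 \to M \to 0$. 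Here $\Omega$ is finitely generated, being a quotient of the finitely generated module $P_d$, and it is projective: dimension shifting yields $\ext_A^1(\Omega,N) \cong \ext_A^{d+1}(M,N) = 0$ for every $A$-module $N$, whence $\pdim_A(\Omega)=0$. The truncated sequence is then a finitely generated projective resolution of $M$ of finite length, so $M$ is of type $FP$.

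For the dimension formula I would start from the characterization $\pdim_A(M) = \max\{n : \ext_A^n(M,F)\neq 0 \text{ for some free } F\}$, which applies since $\pdim_A(M)$ is now known to be finite. The crucial point is that, $M$ being of type $FP$, the functors $\ext_A^n(M,-)$ commute with arbitrary direct sums: computing $\ext$ from a resolution $P_\bullet$ by finitely generated projectives, one has $\Hom_A(P_i, \bigoplus_j N_j) \cong \bigoplus_j \Hom_A(P_i,N_j)$ because each $P_i$ is finitely generated, and passing to cohomology commutes with direct sums. Writing an arbitrary free module as $F = A^{(I)}$, this gives $\ext_A^n(M,F) \cong \bigoplus_I \ext_A^n(M,A)$, which is nonzero exactly when $\ext_A^n(M,A)\neq 0$. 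Substituting ``$A$'' for ``$F$ free'' in the maximum then produces $\pdim_A(M) = \max\{n : \ext_A^n(M,A)\neq 0\}$.

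The only genuinely delicate step is the commutation $\Hom_A(P,\bigoplus_j N_j) \cong \bigoplus_j \Hom_A(P,N_j)$, which is precisely where finite generation of the $P_i$ (hence the type $FP$ hypothesis) enters: I would note that the canonical comparison map is always injective, and that finite generation is exactly what forces surjectivity. Everything else is formal truncation and dimension shifting.
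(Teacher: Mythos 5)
Your proof is correct and is essentially the standard argument: the paper does not prove this proposition itself but defers to the cited reference (Brown, Chapter VIII), where exactly your two ingredients appear --- the syzygy truncation $\Omega=\im(P_d\to P_{d-1})$ shown projective by dimension shifting, and the commutation of $\ext_A^n(M,-)$ with direct sums to replace an arbitrary free module by $A$ in the maximum. The only cosmetic point is the indexing at the low end: for $d=1$ the kernel should be read as $\Ker(P_0\to M)$, and for $d=0$ one notes directly that $M$ is a finitely generated projective module.
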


The following result is \cite[Corollary 1.6]{bieri_homological_1983}. The implication (iii)$\Rightarrow$(i), due to Bieri-Eckmann \cite{Bieri_Eckmann74}, gives an effective condition to check that a module is of type $FP_\infty$ and  will be useful in section \ref{Sec:Smoothness}.

\begin{prop}\label{characterization FP}
Let $A$ be an algebra and $M$ a left (resp. right) $A$-module. The following conditions are equivalent: 
\begin{enumerate}[label= (\roman*)]
    \item The left (resp. right) $A$-module $M$ is of type $FP_\infty$;
    \item For any direct product $\prod_{i\in I}N_i$ of right (resp. left) $A$-modules, the natural map  $\tor_k^A(\prod_{i \in I}N_i,M)\to \prod_{i\in I}\tor_k^A(N_i,M)$ (resp. $\tor_k^A(M,\prod_{i \in I}N_i)\to \prod_{i\in I}\tor_k^A(M,N_i)$ is an isomorphism for all $k\geq0$;
    \item For any direct product $\prod A$ of arbitrary many copies of $A$, we have $\tor_i^{A}(\prod A, M)= 0$ (resp. $\tor_i^{A}(M, \prod A)= 0$) for $i\geq 1$ and the natural map $\left(\prod A \right) \otimes_{A} M \to \prod M$(resp. $M \otimes_{A} \left(\prod A\right) \to \prod M$) is an isomorphism.
\end{enumerate}
\end{prop}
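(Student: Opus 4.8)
The plan is to prove the cyclic chain of implications (i)$\Rightarrow$(ii)$\Rightarrow$(iii)$\Rightarrow$(i), treating only the case of a left module $M$ (the right module case being symmetric), the implication (iii)$\Rightarrow$(i) being the substantial one. For (i)$\Rightarrow$(ii), I would fix a projective resolution $P_\bullet \to M$ with each $P_j$ finitely generated. The key observation is that for a finitely generated projective $A$-module $P$ the canonical comparison map $(\prod_{i\in I} N_i)\otimes_A P \to \prod_{i\in I}(N_i\otimes_A P)$ is an isomorphism: this is clear for $P=A$, and it passes to finite direct sums and to retracts, hence holds for all finitely generated projectives. Applying this degreewise gives an isomorphism of complexes $(\prod_i N_i)\otimes_A P_\bullet \cong \prod_i(N_i\otimes_A P_\bullet)$, and since direct products are exact in module categories, homology commutes with $\prod$; taking $H_k$ identifies the natural map of (ii) with this isomorphism. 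The implication (ii)$\Rightarrow$(iii) is then immediate by specializing to $N_i=A$ for all $i$: since $A$ is free, $\tor_0^A(A,M)=M$ and $\tor_k^A(A,M)=0$ for $k\geq 1$, so (ii) yields $\tor_k^A(\prod A,M)=0$ for $k\geq 1$ and, in degree $0$, the natural map $(\prod A)\otimes_A M \to \prod M$ is an isomorphism.

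For (iii)$\Rightarrow$(i) I would first isolate an elementary lemma: an $A$-module $N$ is finitely generated if and only if the natural map $\nu^N_I\colon (\prod_{i\in I} A)\otimes_A N \to \prod_{i\in I} N$ is surjective for every index set $I$. The nontrivial direction evaluates surjectivity for $I$ the underlying set of $N$ at the tautological element $(x)_{x\in N}\in\prod_{x\in N} N$: any preimage $\sum_{j=1}^r \alpha_j\otimes y_j$ forces $x=\sum_j (\alpha_j)_x\, y_j$ for all $x\in N$, so $y_1,\dots,y_r$ generate $N$. With this lemma in hand, I would construct a resolution of $M$ by finitely generated projectives by induction, proving the strengthened statement $P(n)$: there is a partial resolution $P_{n-1}\to\cdots\to P_0\to M\to 0$ by finitely generated free modules whose $n$-th syzygy $\Omega_n$ (with the convention $\Omega_0=M$) satisfies, for every index set $I$, that $\nu^{\Omega_n}_I$ is an isomorphism and $\tor_k^A(\prod A,\Omega_n)=0$ for all $k\geq 1$. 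The base case $n=0$ is exactly hypothesis (iii).

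For the inductive step, the lemma applied to $P(n)$ shows $\Omega_n$ is finitely generated, so I choose a finitely generated free $P_n\twoheadrightarrow \Omega_n$ and set $\Omega_{n+1}=\ker(P_n\to\Omega_n)$, which is the $(n+1)$-th syzygy of the extended resolution. The long exact $\tor$-sequence of $0\to\Omega_{n+1}\to P_n\to\Omega_n\to 0$, together with $\tor_k^A(\prod A,P_n)=0$ for $k\geq 1$ and the vanishing for $\Omega_n$ from $P(n)$, gives $\tor_k^A(\prod A,\Omega_{n+1})=0$ for $k\geq 1$. Since moreover $\tor_1^A(\prod A,\Omega_n)=0$, tensoring the short exact sequence with $\prod A$ keeps it short exact, and comparing it with the exact product sequence $0\to\prod\Omega_{n+1}\to\prod P_n\to\prod\Omega_n\to 0$ via the natural maps $\nu$, the five lemma forces $\nu^{\Omega_{n+1}}_I$ to be an isomorphism, because $\nu^{P_n}_I$ and $\nu^{\Omega_n}_I$ are isomorphisms. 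This establishes $P(n+1)$, and hence $M$ is of type $FP_n$ for every $n$, i.e. of type $FP_\infty$.

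The main obstacle is the correct formulation of the inductive hypothesis: carrying only the information ``$\Omega_n$ is finitely generated'' is insufficient to feed the five lemma at the next stage, so one must propagate simultaneously both the isomorphism $\nu^{\Omega_n}$ and the higher $\tor$-vanishing. The elementary lemma on finite generation is precisely what converts the homological input of (iii) into the finiteness conclusion, while checking that the comparison maps produced in (i)$\Rightarrow$(ii) and in degree $0$ of (ii)$\Rightarrow$(iii) genuinely coincide with the stated natural maps requires a little bookkeeping but is routine.
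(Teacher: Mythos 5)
Your proof is correct. Note that the paper does not prove this proposition at all: it is quoted from the literature (Corollary 1.6 of Bieri's notes, with (iii)$\Rightarrow$(i) attributed to Bieri--Eckmann), so there is no in-paper argument to compare against. Your write-up is the classical proof from those sources: the tensor-compatibility of finitely generated projectives with direct products for (i)$\Rightarrow$(ii), the specialization $N_i=A$ for (ii)$\Rightarrow$(iii), and for (iii)$\Rightarrow$(i) the characterization of finite generation via surjectivity of $\nu^N_I\colon \left(\prod_I A\right)\otimes_A N \to \prod_I N$ evaluated at the tautological element, fed into a syzygy induction. Your insistence on carrying both the isomorphism $\nu^{\Omega_n}_I$ and the higher $\tor$-vanishing through the induction is exactly the right formulation; the five-lemma step and the $\tor$ long exact sequence are applied correctly, and the induction does produce the required resolution by finitely generated free modules, so the argument is complete.
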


We have similar results for $\ext$ functors, but we will only use the following particular statement:

\begin{prop}\label{FP ext}
    Let $A$ be an algebra and $M$ a right (resp. left) $A$-module. If $M$ is of type $FP_\infty$, then $\ext_{A^{op}}(M,-)$ (resp. $\ext_A(M,-)$) commutes with direct sums.
\end{prop}

\subsection{Poincaré duality modules}

 In this subsection we recall the notion of Poincaré duality module, from \cite[Section V]{dicks_groups_1989}. This is the basic notion involved in homological duality questions. We fix an algebra $A$.

\begin{defi}\label{def:hdm}
A left $A$-module $X $ is said to be a \textsl{Poincaré duality module of dimension $d\geq 0$} if the following conditions hold:
\begin{enumerate}[label= (\roman*)]
	\item $X$ is of type $FP$ and $\pdim_A(X) = d$;
	\item There exists a right $A$-module $D_X$ together with, for any left $A$-module $N$ and any $i\in \{0, \ldots , d\}$, functorial isomorphims 
	$$\ext^i_A(X, N)\simeq \tor^A_{d-i}(D_X,N).$$
	\end{enumerate}
A right $A$-module $D_X$ as above is called a \textsl{dualizing module for $X$}.
\end{defi}


\begin{rem}\label{rem:unidualizing}
Let $X$ be a Poincaré duality module of dimension $d\geq 0$. If $X$ is an $A$-$S$-bimodule, with $S$ another algebra, the functoriality condition  in Definition \ref{def:hdm} implies that the isomorphism
$$\ext^i_A(X, N)\simeq \tor^A_{d-i}(D_X,N)$$
is right $S$-linear.
In particular, we have
$$\ext^i_A(X, A)\simeq \tor^A_{d-i}(D_X,A)\simeq \begin{cases} \{0\} \ \textrm{if} \ i\not=d \\
D_X \ \textrm{if} \  i=d
\end{cases}$$
where the  isomorphism is right $A$-linear. Therefore a dualizing module as in Definition \ref{def:hdm}  is unique up to isomorphism. 
\end{rem}

The following result is \cite[Theorem 2.21]{dicks_groups_1989}. For the sake of completeness, we give a direct proof of $(ii) \Rightarrow (i)$ following classical arguments of the setting of duality groups \cite{brown_cohomology_1982}.

\begin{thm}\label{thm:hdm}
Let $X$ be a left $A$-module of type $FP$ with $\pdim(X) = d\geq0$. The following assertions are equivalent:
\begin{enumerate}[label= (\roman*)]
\item $X$ is a Poincaré duality module of dimension $d$.
\item We have $\ext_A^i(X,A)=\{0\}$ if $i\not=d$.
\end{enumerate}	
In this case, we have $D_X\simeq \ext_A^d(X,A)$ as right $A$-modules.
\end{thm}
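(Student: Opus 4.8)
The plan is to dispatch $(i)\Rightarrow(ii)$ immediately and to devote the main effort to $(ii)\Rightarrow(i)$, which I would prove by the classical dualized-resolution argument familiar from the setting of duality groups. For $(i)\Rightarrow(ii)$, suppose $X$ is a Poincaré duality module. Applying Remark~\ref{rem:unidualizing} with $N=A$ gives $\ext^i_A(X,A)\simeq\tor^A_{d-i}(D_X,A)$, and since $A$ is free as a left module over itself the right-hand side vanishes for $i<d$ and equals $D_X$ for $i=d$; for $i>d$ the left-hand side vanishes because $\pdim_A(X)=d$. Hence $\ext^i_A(X,A)=\{0\}$ for $i\neq d$ and $D_X\simeq\ext^d_A(X,A)$, as claimed.

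For $(ii)\Rightarrow(i)$, I would start from a finite resolution
$$0\to P_d\to\cdots\to P_0\to X\to 0$$
by finitely generated projective left $A$-modules, which exists because $X$ is of type $FP$ with $\pdim_A(X)=d$. Dualizing by $\Hom_A(-,A)$ produces a cochain complex $\Hom_A(P_\bullet,A)$ of finitely generated projective \emph{right} $A$-modules, concentrated in degrees $0,\dots,d$, whose $i$-th cohomology is $\ext^i_A(X,A)$. Setting $D_X:=\ext^d_A(X,A)$, hypothesis $(ii)$ says exactly that this complex is acyclic except at the top, so that
$$0\to\Hom_A(P_0,A)\to\cdots\to\Hom_A(P_d,A)\to D_X\to 0$$
is a finitely generated projective resolution of the right $A$-module $D_X$, of length $d$.

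It then remains to compute $\tor^A_\bullet(D_X,N)$ from this resolution for an arbitrary left $A$-module $N$. The essential input is the natural isomorphism $\Hom_A(P,A)\otimes_A N\xrightarrow{\sim}\Hom_A(P,N)$, $\phi\otimes n\mapsto(p\mapsto\phi(p)n)$, valid for every finitely generated projective $P$; it identifies $\Hom_A(P_\bullet,A)\otimes_A N$ with the complex $\Hom_A(P_\bullet,N)$ up to the degree reversal $j\leftrightarrow d-j$. Passing to homology yields isomorphisms $\tor^A_{d-i}(D_X,N)\simeq\ext^i_A(X,N)$, functorial in $N$ since the displayed isomorphism is, which is precisely condition (ii) of Definition~\ref{def:hdm}. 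The only real care required---and the point I would treat most carefully---is the bookkeeping: verifying that the natural isomorphism commutes with the differentials and tracking the reindexing $Q_j:=\Hom_A(P_{d-j},A)$, so that the homological degree $j$ of the $\tor$ computation matches the cohomological degree $d-j$ of the $\ext$ computation. This step is where finiteness of the resolution (type $FP$) is indispensable, as the $\Hom$-$\otimes$ isomorphism fails without it.
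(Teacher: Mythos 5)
Your argument is correct and follows essentially the same route as the paper: $(i)\Rightarrow(ii)$ via the uniqueness remark applied to $N=A$, and $(ii)\Rightarrow(i)$ by dualizing a finite finitely generated projective resolution of $X$ to obtain a projective resolution of $\ext^d_A(X,A)$ and invoking the natural isomorphism $\Hom_A(P,A)\otimes_A N\simeq\Hom_A(P,N)$ for finitely generated projective $P$. No gaps.
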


\begin{proof}
$(i) \Rightarrow (ii)$ and the last assertion follow from the remark above.
Assume that $(ii)$ holds,
and consider a resolution of $X$
$$0 \rightarrow P_d \rightarrow P_{d-1} \rightarrow \cdots \rightarrow P_1 \rightarrow P_0 \rightarrow X \rightarrow 0$$
by finitely generated projective left $A$-modules. Consider the (projective) right $A$-modules 
$$\overline{P_i} = {\rm Hom}_A(P_i,A)$$
and the corresponding complex 
$$0 \rightarrow \overline{P_0} \rightarrow \overline{P_1} \rightarrow \cdots \rightarrow \overline{P_{d-1}} \rightarrow \overline{P_d} \rightarrow 0$$
We have $H^*(\overline{P})= {\ext}^*_A(X ,A)$, and (ii) ensures that
$$0 \rightarrow \overline{P_0} \rightarrow \overline{P_1} \rightarrow \cdots \rightarrow \overline{P_{d-1}} \rightarrow \overline{P_d} \rightarrow {\ext}^d_A(X, A)\rightarrow 0$$
is a resolution of ${\ext}^d_A(X, A)$ by projective right $A$-modules.
We then have, for a left $A$-module $N$ 
\begin{align*}
{\tor}_{d-i}^A({\ext}^d_A(X, A),N) &\simeq H^i(\overline{P} \otimes_A N) = H^i({\rm Hom}_A(P,A)\otimes_A N)\\
& \simeq H^i({\rm Hom}_A(P,N)) \ (\textrm{finite generation and projectivity} \ {\rm of} \ P_i) \\
& \simeq {\ext}_A^i(X,N).
\end{align*}
This proves (i), with $D_X= {\ext}^d_A(X, A)$.
	\end{proof}

\subsection{Hochschild (co)homology and homological duality}\label{Hochschild def}

The Hochschild (co)homology spaces (\cite{witherspoon_hochschild_2019}) of an algebra $A$ are defined by considering the functors $\ext$ and $\tor$ in the category of $A$-bimodules. 

\begin{defi}
Let $A$ be an algebra and $M$ an $A$-bimodule. The \textit{Hochschild cohomology} spaces of $A$ with coefficient in $M$ (resp. \textit{Hochschild homology} spaces of $A$ with coefficient in $M$) are the vector spaces \[\HH^\bullet(A,M) := \ext_{A^e}^\bullet(A,M), \quad \HH_\bullet(A,M) := \tor^{A^e}_\bullet(M,A).\]. 
\end{defi}


The general homological finiteness notions defined in the previous section specify to the case of bimodules which is our case of interest when considering Hochschild (co)homology.

\begin{defi}
    Let $A$ be an algebra.
\begin{enumerate}
    \item The \textit{cohomological dimension} of $A$ is defined to be
$\cd(A) := \pdim_{A^e}(A)$.
\item The algebra $A$ is said to be \textit{homologically smooth} if $A$ is of type $FP$ as a left $A^e$-module.
\end{enumerate}
\end{defi}

Notice that if $A$ is a homologically smooth algebra, then $\cd(A) = \max \left\{n\in \mathbb{N},~ \HH^n(A,A^e) \neq 0\right\}$, by Proposition \ref{prop:FP-FPinfty}.

For some (smooth) algebras, appropriate conditions ensure that there exists a duality between Hochschild homology and cohomology, analogous to Poincaré duality in algebraic topology.


\begin{defi}\label{def : duality}
Let $A$ be an algebra. We say that $A$ has \textit{homological duality in dimension} $d$ if :
\begin{enumerate}[label = (\roman*)]
    \item $A$ is a Poincaré duality $A^e$-module of dimension $d$, i.e. $A$ is homologically smooth with $d=\cd(A)$ and $\HH^{i}(A,A^{e})=\{0\}$ for $i\not=d$;
    \item  $D = \HH^d(A,A^e)$, endowed with the $A$-bimodule structure induced by right multiplication of $A^e$, is flat as a left and as a right $A$-module.
\end{enumerate}
If these conditions hold, we say that the bimodule $D$ is a dualizing bimodule of $A$.
\end{defi}

The following result justifies the "homological duality" terminology above. 

\begin{thm}\label{thm : hochschild duality}
Let $A$ be an algebra. If $A$ has homological duality in dimension $d$ with dualizing bimodule $D$ , then for any $A$-bimodule $M$, we have functorial isomorphisms $$\HH^{n}(A,M) \simeq \HH_{d-n}(A,M \otimes_A D )\simeq \HH_{d-n}(A, D\otimes_A M).$$ 
\end{thm}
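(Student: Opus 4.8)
The plan is to combine the abstract Poincaré duality isomorphism for $A$, viewed as a left $A^e$-module, with a purely homological comparison between $\tor^{A^e}_\bullet(D,-)$ and Hochschild homology; the flatness of $D$ is precisely what powers the latter comparison.

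First I would unwind the hypotheses. By Definition \ref{def : duality}(i), $A$ is a Poincaré duality $A^e$-module of dimension $d$, so Definition \ref{def:hdm} furnishes a dualizing module $D_A$ with a functorial isomorphism $\ext^i_{A^e}(A,N)\simeq \tor^{A^e}_{d-i}(D_A,N)$, and Theorem \ref{thm:hdm} (applied over the base ring $A^e$) identifies it as $D_A = \ext^d_{A^e}(A,A^e)=\HH^d(A,A^e)=D$. Taking $N=M$ an arbitrary $A$-bimodule, this reads
\[\HH^n(A,M) = \ext^n_{A^e}(A,M)\simeq \tor^{A^e}_{d-n}(D,M),\]
naturally in $M$, both sides vanishing for $n\notin\{0,\dots,d\}$. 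Everything is thus reduced to producing natural isomorphisms
\[\tor^{A^e}_\bullet(D,M)\cong \tor^{A^e}_\bullet(M\otimes_A D,A)\quad\text{and}\quad \tor^{A^e}_\bullet(D,M)\cong \tor^{A^e}_\bullet(D\otimes_A M,A),\]
since the right-hand sides are by definition $\HH_\bullet(A,M\otimes_A D)$ and $\HH_\bullet(A,D\otimes_A M)$.

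Second, I would establish the degree-zero identities these refine. A direct check on generators and relations (extending the standard $\HH_0(A,N)\cong N/[A,N]$) yields, for every $A$-bimodule $X$, natural isomorphisms $D\otimes_{A^e}X\cong (X\otimes_A D)\otimes_{A^e}A$ (implemented by the flip $d\otimes x\mapsto x\otimes d$) and $D\otimes_{A^e}X\cong (D\otimes_A X)\otimes_{A^e}A$ (implemented by the identity on representatives). Choosing a projective resolution $Q_\bullet\to M$ of $M$ by left $A^e$-modules and applying these degreewise gives
\[\tor^{A^e}_\bullet(D,M)=H_\bullet(D\otimes_{A^e}Q_\bullet)\cong H_\bullet\big((Q_\bullet\otimes_A D)\otimes_{A^e}A\big),\]
and likewise with $D\otimes_A Q_\bullet$. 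It then remains to recognize the right-hand side as $\tor^{A^e}_\bullet(M\otimes_A D,A)$, i.e. to check that $Q_\bullet\otimes_A D\to M\otimes_A D$ is a resolution of right $A^e$-modules by $\tor(-,A)$-acyclic objects.

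Third — and this is where I expect the real work to lie, and where the flatness of $D$ enters decisively — I would verify the two required properties of $Q_\bullet\otimes_A D$. Exactness of $Q_\bullet\otimes_A D\to M\otimes_A D$ holds because $D$ is flat as a one-sided $A$-module, so $-\otimes_A D$ preserves the exactness of $Q_\bullet\to M$. For flatness of each $Q_i\otimes_A D$ as a right $A^e$-module, since $Q_i$ is a direct summand of a free module $(A^e)^{(I)}$ it suffices to treat $A^e\otimes_A D$; here a bimodule computation produces a natural isomorphism $(A^e\otimes_A D)\otimes_{A^e}L\cong D\otimes_A L$ for every left $A^e$-module $L$, and the flatness of $D$ as an $A$-module makes $D\otimes_A -$ exact, so $A^e\otimes_A D$ is flat over $A^e$. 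This yields the first isomorphism, and the mirror-image computation (using flatness of $D$ on the other side) yields the second. Composing with the duality isomorphism of the first step and tracking naturality at every stage completes the proof. The main obstacle is the bimodule bookkeeping in this last step: one must keep straight which of the several $A$-actions on $A^e\otimes_A D$ feeds each tensor factor, so that the identification with $D\otimes_A -$, and hence the reduction to the one-sided flatness of $D$, is valid.
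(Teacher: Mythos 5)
Your proposal is correct and takes essentially the same route as the paper: the paper's proof consists of invoking Theorem \ref{thm:hdm} to obtain $\HH^{n}(A,M)\simeq \tor^{A^e}_{d-n}(D,M)$ and then asserting that the two-sided flatness of $D$, combined with the fact that $\tor$ can be computed from flat resolutions, identifies the right-hand side with the announced Hochschild homology spaces. Your second and third steps simply supply, correctly, the balancing/flat-resolution verification that the paper leaves to the reader as ``not difficult to check''.
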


\begin{proof}
We know from Theorem \ref{thm:hdm} that for any $A$-bimodule $M$ we have 
\[\HH^{i}(A,M)\simeq \tor_{d-i}^{A^{e}}(D,M)
\]
Using that $D$ is assumed to be flat as a left and as a right $A$-module and that $\tor$ can be computed by using resolutions by flat modules, it is not difficult to check that the above spaces on the right are isomorphic to the announced Hochschild homology spaces.
\end{proof}

Definition \ref{def : duality} and Theorem \ref{thm : hochschild duality} include a number of well-studied particular homological dualities in the literature \cite{van_den_bergh_relation_1998, ginzburg_calabi-yau_nodate,LiWaWu14}.

\begin{defi}
Let $A$ be an algebra having homological duality in dimension $d$ and dualizing bimodule $D$.
\begin{enumerate}
    \item $A$ is said to have Van den Bergh duality if the $A$-bimodule $D$ is invertible;
    \item $A$ is said to be twisted Calabi-Yau if $D \simeq A_\sigma$ as an $A$-bimodule, with $\sigma$ an algebra automorphism of $A$. In this case, the automorphism $\sigma$ is called \textit{the Nakayama automorphism} of $A$, and is unique up to inner automorphisms;
    \item $A$ is said to be Calabi-Yau if $D \simeq A$ as $A$-bimodules.
\end{enumerate}
\end{defi}

\begin{rem}
Let $A$ be an algebra and assume that $A$ is a Poincaré duality $A^{e}$-module of dimension $d$. If $D=\HH^d(A,A^{e})$ is flat as a left $A$-module, then by \cite[Theorem 1]{kowalzig_duality_2010}, we have natural isomorphisms for any $A$-bimodule $M$  
$$\HH^{n}(A,M) \simeq \HH_{d-n}(A,M \otimes_A D )$$
Similarly if $D$ is flat as a right $A$-module, then by \cite[Theorem 4]{krahmer_hochschild_2012}, we have natural isomorphisms   
$$\HH^{n}(A,M) \simeq \HH_{d-n}(A,D \otimes_A M )$$
These two results indicate that, although quite flexible, the framework for homological duality in Definition \ref{def : duality} might not be the most general one. We have adopted Definition \ref{def : duality} because we did not see any reason to privilegiate left or right flatness for the dualizing bimodule $D$. 
\end{rem}

See \cite[Section 4.3]{witherspoon_hochschild_2019} and references therein for examples of algebras with homological duality. In particular, algebras of polynomials in $m$ indeterminates have homological duality in dimension $m$.

\subsection{Cohomology of Hopf algebras}

In this subsection we recall some special features of Hochschild (co)homology for Hopf algebras. We begin by recalling Hochschild cohomology and homology spaces of a Hopf algebra $H$ can be described by using suitable $\ext$ and $\tor$ spaces in the category of right or left $H$-modules and resolutions of the trivial module (see \cite{FengTsygan91,ginzburg_cohomology_1993,brown_dualising_2008, wang_calabi-yau_2017}):



\begin{thm}\label{ext hopf}
    Let $H$ be a Hopf algebra and $M$ an $H$-bimodule. We have
    \[
        \HH^n(H,M) \simeq \ext^n_{H^{op}}(k_\varepsilon,\widetilde{M})\simeq \ext_H^n({}_\varepsilon k,\overline{M}), \quad
        \HH_n(H,M) \simeq \tor^H_n(k_\varepsilon,M') \simeq \tor^H_n(M^{''}, {_\varepsilon k})
    \]
    where the $H$-modules $\widetilde{M}$, $\overline{M}$, $M'$, $M''$ all have $M$ as underlying vector space, and $H$-action given, for $m\in N$ and $h\in H$, by the following formulas.
    \begin{enumerate}[label= $\bullet$]
        \item $\widetilde{M}$ has the right $H$-module structure given by $m\cdot h = S(h_{(1)}) m h_{(2)}$;
        \item $\overline{M}$ has the left $H$-module structure given by $h\cdot m = h_{(1)} m S(h_{(2)})$; 
        \item $M'$ has the left $H$-module structure given by $h\rightarrow m = h_{(2)}mS(h_{(1)})$;
        \item $M^{''}$ has  the right $H$-module structure given by $m\leftarrow h = S(h_{2}) m  h_{(1)}$.
    \end{enumerate}
\end{thm}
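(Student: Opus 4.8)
The plan is to realize the regular bimodule $H$ as a module \emph{induced} from the trivial $H$-module along a suitable algebra map $H\to H^e$, and then to transport the computation of $\HH^\bullet(H,M)=\ext^\bullet_{H^e}(H,M)$ and $\HH_\bullet(H,M)=\tor_\bullet^{H^e}(M,H)$ to $\ext$ and $\tor$ over $H$ by a base-change (Shapiro-type) isomorphism. Concretely, I would first introduce the map $\delta\colon H\to H^e$, $\delta(h)=h_{(1)}\otimes S(h_{(2)})$, and check directly that it is an algebra homomorphism: the factor $H^{op}$ reverses the product, which is exactly compensated by the anti-multiplicativity of $S$. Since $(\id\otimes\varepsilon)\circ\delta=\id_H$, the map $\delta$ is injective, and it endows $H^e$ with the structure of a right $H$-module by $(a\otimes b)\cdot h=(a\otimes b)\delta(h)=ah_{(1)}\otimes S(h_{(2)})b$.

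The heart of the argument is the claim that $H^e$ is \emph{free} as a right $H$-module via $\delta$, and that there is an isomorphism of left $H^e$-modules $H^e\otimes_H{}_\varepsilon k\simeq H$. To prove this I would exhibit an explicit trivialization: equip $H\otimes H$ with the free right $H$-module structure $(a\otimes b)\cdot h=ah\otimes b$ (acting on the first factor only) and consider the $k$-linear map $U\colon H\otimes H\to H^e$, $U(a\otimes b)=a_{(1)}\otimes S(a_{(2)})b$. A short Sweedler computation shows that $U$ is right $H$-linear, and that $x\otimes y\mapsto x_{(1)}\otimes x_{(2)}y$ is a two-sided inverse, both verifications using only the antipode identities $\sum a_{(1)}S(a_{(2)})=\varepsilon(a)=\sum S(a_{(1)})a_{(2)}$ — crucially, \emph{not} the bijectivity of $S$. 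Thus $H^e$ is free over $H$, and contracting the free variable identifies $H^e\otimes_H{}_\varepsilon k$ with $H$ as left $H^e$-modules.

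With the key lemma in hand the rest is formal. Because $H^e$ is free (in particular flat, and $H^e\otimes_H(-)$ preserves projectives) as a right $H$-module, tensoring any projective resolution $P_\bullet\to{}_\varepsilon k$ of the trivial left $H$-module produces a projective resolution $H^e\otimes_H P_\bullet\to H$ of $H$ over $H^e$. Applying $\Hom_{H^e}(-,M)$ and $M\otimes_{H^e}(-)$ and invoking the tensor–hom adjunction $\Hom_{H^e}(H^e\otimes_H P,M)\simeq\Hom_H(P,M)$ together with $M\otimes_{H^e}(H^e\otimes_H P)\simeq M\otimes_H P$ then gives $\HH^n(H,M)\simeq\ext^n_H({}_\varepsilon k,\overline M)$ and $\HH_n(H,M)\simeq\tor^H_n(M'',{}_\varepsilon k)$, the coefficients being $M$ with the $H$-structure restricted along $\delta$. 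Reading off these restrictions gives exactly the prescribed actions: the left $H^e$-convention $(a\otimes b)\cdot m=amb$ yields the left $H$-module $\overline M$ with $h\cdot m=h_{(1)}mS(h_{(2)})$, while the right $H^e$-convention $m\cdot(a\otimes b)=bma$ yields the right $H$-module $M''$ with $m\leftarrow h=S(h_{(2)})mh_{(1)}$.

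The two remaining identities follow by the symmetric argument, using the companion algebra homomorphism $\delta''\colon H\to H^e$, $\delta''(h)=h_{(2)}\otimes S(h_{(1)})$, together with the presentation of $H$ as a \emph{right} $H^e$-module $H\simeq{}_\varepsilon k\otimes_H H^e$, where $H^e$ is now free as a \emph{left} $H$-module via $\delta''$ (the associated induced-module isomorphism relying on the relation $\sum S(h_{(1)})h_{(2)}=\varepsilon(h)$). Running the same machine — computing $\ext$ over $H^e$ with right modules, and using $\tor^{H^e}_\bullet(M,H)\simeq\tor^{H^e}_\bullet(H,M)$ — and restricting along $\delta''$ produces the right $H$-module $\widetilde M$ with $m\cdot h=S(h_{(1)})mh_{(2)}$ and the left $H$-module $M'$ with $h\to m=h_{(2)}mS(h_{(1)})$, whence $\HH^n(H,M)\simeq\ext^n_{H^{op}}(k_\varepsilon,\widetilde M)$ and $\HH_n(H,M)\simeq\tor^H_n(k_\varepsilon,M')$. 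I expect the only genuine obstacle to be the key lemma of the second paragraph: arranging the free-module trivialization and the induced-module identification with the correct placement of $S$ and of the Sweedler legs, so that the four restricted actions come out as the four prescribed formulas. Everything after that is a formal base-change computation, and no hypothesis on $S$ beyond the Hopf axioms is required.
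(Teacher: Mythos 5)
Your proposal is correct and is essentially the standard argument behind Theorem \ref{ext hopf}, which the paper itself states without proof, referring to the cited literature (Feng--Tsygan, Ginzburg--Kumar, Brown--Zhang, Wang--Yu--Zhu): one checks that $\delta(h)=h_{(1)}\otimes S(h_{(2)})$ and $\delta''(h)=h_{(2)}\otimes S(h_{(1)})$ are algebra maps $H\to H^e$, that $H^e$ is free as a right $H$-module via $\delta$ and as a left $H$-module via $\delta''$, that $H\simeq H^e\otimes_H{}_\varepsilon k\simeq k_\varepsilon\otimes_H H^e$, and then applies base change. Your computations all verify (in particular the trivializations $U(a\otimes b)=a_{(1)}\otimes S(a_{(2)})b$ with inverse $x\otimes y\mapsto x_{(1)}\otimes x_{(2)}y$, and the four restricted actions), and you have correctly matched each of $\delta$, $\delta''$ with the side of the $H$-action for which freeness follows from the antipode axioms $h_{(1)}S(h_{(2)})=\varepsilon(h)=S(h_{(1)})h_{(2)}$ alone, so that no bijectivity of $S$ is needed.
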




Homological smoothness for Hopf algebras can be detected using the trivial left or right $H$-module (see \cite[Proposition A.2]{wang_calabi-yau_2017}):

\begin{thm}\label{smooth Hopf}
     Let $H$ be a Hopf algebra. The following assertions are equivalent: 
     \begin{enumerate}[label= (\roman*)]
         \item The algebra $H$ is homologically smooth;
         \item the right $H$-module $k_\varepsilon$ is of type $FP$;
         \item the left $H$-module ${}_\varepsilon k$ is of type $FP$.
     \end{enumerate}
     \end{thm}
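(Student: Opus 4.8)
The plan is to transfer the $FP$ property of $H$ as a left $H^e$-module to that of the trivial module over $H$ (and over $H^{op}$) through an induction functor attached to a suitable algebra map $H \to H^e$. Consider the linear map $\rho \colon H \to H^e$, $\rho(h) = h_{(1)}\otimes S(h_{(2)})$. A direct Sweedler computation shows that $\rho$ is a morphism of algebras, the key point being that the multiplication of $H^e=H\otimes H^{op}$ reverses the order in the second tensor factor, matching the anti-multiplicativity of $S$. The central technical lemma I would establish is that $H^e$ is free, hence faithfully flat, as a right $H$-module via $\rho$: the map $H\otimes H \to H^e$, $v\otimes h \mapsto (1\otimes v)\rho(h)=h_{(1)}\otimes S(h_{(2)})v$, is right $H$-linear (for the action of $H$ on the second factor of the source by multiplication) and bijective, with inverse obtained by composing the flip with the translation map $a\otimes h\mapsto a_{(1)}\otimes a_{(2)}h$. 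I would stress that this uses only the antipode axioms and not the bijectivity of $S$. As a consequence, inducing the trivial module yields an isomorphism of left $H^e$-modules $H^e\otimes_{H,\rho}{}_\varepsilon k \simeq H$, the regular bimodule, via $(a\otimes b)\otimes 1 \mapsto ab$.

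Granting this lemma, the implication (iii)$\Rightarrow$(i) is immediate. The induction functor $\mathrm{Ind}_\rho = H^e\otimes_{H,\rho}(-)\colon {}_H\mathcal M \to {}_{H^e}\mathcal M$ is exact (as $H^e$ is flat over $H$) and sends finitely generated projective modules to finitely generated projective modules (it sends $H$ to $H^e$ and commutes with finite direct sums and retracts). If ${}_\varepsilon k$ admits a finite resolution by finitely generated projective left $H$-modules, applying $\mathrm{Ind}_\rho$ produces such a resolution of $H\simeq\mathrm{Ind}_\rho({}_\varepsilon k)$ over $H^e$, so $H$ is of type $FP$ and is homologically smooth. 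The equivalence (i)$\Leftrightarrow$(ii) is handled symmetrically, replacing $\rho$ by the algebra map $\lambda\colon H\to H^e$, $\lambda(h)=h_{(2)}\otimes S(h_{(1)})$, for which $H^e$ is free as a left $H$-module and $k_\varepsilon\otimes_{H,\lambda}H^e\simeq H$ as right $H^e$-modules.

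For the converse (i)$\Rightarrow$(iii) I would argue through the two finiteness characterizations recalled above. Since $H^e$ is flat as a right $H$-module via $\rho$ and $H\simeq H^e\otimes_{H,\rho}{}_\varepsilon k$, flat base change gives, for every right $H^e$-module $N$, natural isomorphisms $\tor^{H^e}_n(N,H)\simeq\tor^H_n(N|_\rho,{}_\varepsilon k)$, where $N|_\rho$ is $N$ restricted along $\rho$; dually $\ext^n_{H^e}(H,M)\simeq\ext^n_H({}_\varepsilon k,\overline M)$, recovering the isomorphisms of Theorem \ref{ext hopf}. A short computation shows that restriction along $\rho$ is essentially surjective: any right $H$-module $P$ (resp. left $H$-module) is of the form $N|_\rho$ (resp. $\overline M$) for the bimodule equal to $P$ with the complementary one-sided action taken trivial via $\varepsilon$. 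Assuming $H$ is of type $FP$ over $H^e$, it has finite projective dimension $d=\cd(H)$, so $\ext^{n}_{H^e}(H,-)$ vanishes for $n>d$; by essential surjectivity $\ext^n_H({}_\varepsilon k,-)$ vanishes for $n>d$, whence $\pdim_H({}_\varepsilon k)\le d<\infty$. Moreover $H$ is of type $FP_\infty$, so by Proposition \ref{characterization FP} the comparison map $\tor^{H^e}_n(\prod_j N_j,H)\to\prod_j\tor^{H^e}_n(N_j,H)$ is an isomorphism for every family; transporting this through the isomorphisms above (restriction commutes with products on underlying spaces) shows $\tor^H_n(\prod_j P_j,{}_\varepsilon k)\to\prod_j\tor^H_n(P_j,{}_\varepsilon k)$ is an isomorphism for every family, i.e. ${}_\varepsilon k$ is of type $FP_\infty$. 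Combined with finite projective dimension, Proposition \ref{prop:FP-FPinfty} yields that ${}_\varepsilon k$ is of type $FP$.

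The main obstacle is the freeness lemma of the first paragraph: producing an explicit right $H$-linear trivialisation of $H^e$ for the twisted action $\rho$, and identifying the induced module with the regular bimodule. Everything else is formal manipulation of induction and restriction together with the two characterisations of $FP$ and $FP_\infty$, the only care being the naturality required to identify the product comparison maps for $H$ and for $H^e$ under the change-of-rings isomorphisms.
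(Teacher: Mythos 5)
The paper does not actually prove this statement --- it is quoted from \cite[Proposition A.2]{wang_calabi-yau_2017} --- and your argument is a correct rendition of the standard proof: the algebra maps $h\mapsto h_{(1)}\otimes S(h_{(2)})$ and $h\mapsto h_{(2)}\otimes S(h_{(1)})$ make $H^e$ free over $H$ on the appropriate side without requiring $S$ to be bijective, they induce the trivial module to the regular bimodule, and the $FP$ property is transported back and forth via Proposition \ref{prop:FP-FPinfty} and Proposition \ref{characterization FP}. The points you flag (the explicit trivialisations of $H^e$, essential surjectivity of restriction along these maps, and compatibility of the product comparison maps under change of rings) all check out.
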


The next result, which specializes the isomorphisms of Theorem \ref{ext hopf} in the case $M = H^e$,  is useful in view of a simple characterization of homological duality in the Hopf algebra case. 

\begin{prop}{\rm(\cite[Proposition 2.1.3]{wang_calabi-yau_2017})}\label{iso H}
    Let $H$ be a homologically smooth Hopf algebra. We have
    $$\HH^n(H,H^e) = \ext^n_{H^e}(H,H^e) \simeq \ext^n_{H^{op}}(k_\varepsilon,H) \otimes H \simeq \ext_H^n({}_\varepsilon k,H) \otimes H$$ as right $H^e$-modules for $n \geq 0$ where the $H^e$-module structures on $\ext^n_{H^{op}}(k_\varepsilon,H) \otimes H$ and $\ext_H^n({}_\varepsilon k,H) \otimes H$ are respectively given by $$(x\otimes h) \leftarrow (a\otimes b) = b_{(2)} x \otimes S^2(b_{(1)})ha \quad \text{ and } \quad (y\otimes h) \leftarrow (a\otimes b) =  y a_{(1)} \otimes bhS^2(a_{(2)}),$$ for $a,b,h \in H, \ x\in  \ext^n_{H^{op}}(k_\varepsilon,H) \otimes H$ and $y \in \ext_H^n({}_\varepsilon k,H)$.
\end{prop}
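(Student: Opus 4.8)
The starting point is that $\HH^n(H,H^e)=\ext^n_{H^e}(H,H^e)$ holds by definition of Hochschild cohomology, so the plan is to compute the right–hand side through a chain of identifications while keeping track, at each stage, of the right $H^e$–module structure induced by right multiplication of $H^e$ on itself. First I would apply Theorem \ref{ext hopf} with $M=H^e$. Here the outer $H$–bimodule structure on $H^e=H\otimes H^{op}$ (the one defining Hochschild cohomology, i.e. left multiplication in $H^e$) is $h\cdot(c\otimes d)\cdot h'=hc\otimes dh'$, so the twist formula of Theorem \ref{ext hopf} gives $\widetilde{H^e}=H\otimes H$ with right $H$–action $(c\otimes d)\cdot h=S(h_{(1)})c\otimes dh_{(2)}$, and symmetrically $\overline{H^e}$ has left $H$–action $h\cdot(c\otimes d)=h_{(1)}c\otimes dS(h_{(2)})$. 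Thus $\HH^n(H,H^e)\simeq\ext^n_{H^{op}}(k_\varepsilon,\widetilde{H^e})\simeq\ext^n_H({}_\varepsilon k,\overline{H^e})$, and it remains to untwist $\widetilde{H^e}$ and $\overline{H^e}$.

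The key homological step is to untwist $\widetilde{H^e}$ as a right $H$–module. The plan is to equip $\widetilde{H^e}$ with the right $H$–comodule structure $\rho(c\otimes d)=(c\otimes d_{(1)})\otimes d_{(2)}$ induced by comultiplication on the $H^{op}$–factor; a direct check shows that this makes $\widetilde{H^e}$ a right–right Hopf module. The fundamental theorem of Hopf modules \cite{montgomery_hopf_1993} then yields an isomorphism of right $H$–modules $\Theta\colon V\otimes H\to\widetilde{H^e}$, where $H$ carries the regular right action, $V=(\widetilde{H^e})^{co H}\simeq H$ is the (trivial) space of coinvariants, and $\Theta(v\otimes x)=(v\otimes 1)\cdot x=S(x_{(1)})v\otimes x_{(2)}$. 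Since $H$ is homologically smooth, $k_\varepsilon$ is of type $FP$ by Theorem \ref{smooth Hopf}, in particular of type $FP_\infty$, so by Proposition \ref{FP ext} the functor $\ext_{H^{op}}(k_\varepsilon,-)$ commutes with direct sums; writing $V\otimes H=\bigoplus_{\dim V}H$ this gives $\ext^n_{H^{op}}(k_\varepsilon,\widetilde{H^e})\simeq V\otimes\ext^n_{H^{op}}(k_\varepsilon,H)\simeq\ext^n_{H^{op}}(k_\varepsilon,H)\otimes H$ as vector spaces. The second isomorphism of the proposition is obtained by the same argument applied to $\overline{H^e}$, using the left version of the fundamental theorem and the left module statement of Proposition \ref{FP ext}.

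The main obstacle is to upgrade these vector-space isomorphisms to isomorphisms of right $H^e$–modules realizing the stated formulas. First I would observe that, since right multiplication on $H^e$ commutes with left multiplication, each map $-\leftarrow(a\otimes b)$, $(c\otimes d)\mapsto ca\otimes bd$, is an endomorphism of the left $H^e$–module $H^e$, hence commutes with the twist $m\mapsto S(h_{(1)})mh_{(2)}$ and defines a right $H$–module endomorphism of $\widetilde{H^e}$; this is exactly why the right $H^e$–structure descends to $\ext^n_{H^{op}}(k_\varepsilon,\widetilde{H^e})$. The remaining and most delicate point is to conjugate this action by $\Theta$, i.e. to compute $\Theta^{-1}\circ(-\leftarrow(a\otimes b))\circ\Theta$. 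Carrying out this computation, using the anti-coalgebra identity $\Delta\circ S=(S\otimes S)\circ\Delta^{op}$, is what produces the square of the antipode in $(x\otimes h)\leftarrow(a\otimes b)=b_{(2)}x\otimes S^2(b_{(1)})ha$: the two occurrences of $S$, one coming from $\Theta$ and one from $\Theta^{-1}$, combine into $S^2$. The analogous bookkeeping for $\overline{H^e}$ then yields the second formula $(y\otimes h)\leftarrow(a\otimes b)=ya_{(1)}\otimes bhS^2(a_{(2)})$. I expect this structure-tracking to constitute the bulk of the proof, the genuine homological content being confined to the $FP_\infty$ argument above; note in particular that only the antipode $S$ (not its inverse) is needed, consistently with the hypotheses of the proposition.
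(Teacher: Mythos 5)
The paper gives no proof of this proposition: it is quoted directly from \cite[Proposition 2.1.3]{wang_calabi-yau_2017}, so there is no internal argument to compare yours against. Your reconstruction is correct and is essentially the standard proof (the one in the cited reference, going back to Brown--Zhang): apply Theorem \ref{ext hopf} with $M=H^e$ carrying the outer bimodule structure, untwist $\widetilde{H^e}$ via the fundamental theorem of Hopf modules, use smoothness (Theorem \ref{smooth Hopf} plus Proposition \ref{FP ext}) to pull the multiplicity space out of the $\ext$, and transport the residual right $H^e$-action. I checked the one step you leave as ``delicate bookkeeping'': with your $\Theta(v\otimes h)=S(h_{(1)})v\otimes h_{(2)}$, the inverse is \emph{not} the naive $c\otimes d\mapsto d_{(1)}c\otimes d_{(2)}$ but the Hopf-module projection $\Theta^{-1}(c\otimes d)=S^2(d_{(1)})c\otimes d_{(2)}$ (coming from $m_{(0)}\cdot S(m_{(1)})$ together with $\Delta\circ S=(S\otimes S)\circ\Delta^{\mathrm{op}}$); conjugating $(c\otimes d)\leftarrow(a\otimes b)=ca\otimes bd$ by $\Theta$ then yields $(v\otimes h)\leftarrow(a\otimes b)=S^2(b_{(1)})va\otimes b_{(2)}h$ on $V\otimes H$, which after applying $\ext^n_{H^{op}}(k_\varepsilon,-)$ to the regular factor and flipping the tensor factors is exactly the stated $(x\otimes h)\leftarrow(a\otimes b)=b_{(2)}x\otimes S^2(b_{(1)})ha$; the left-module computation with $\overline{H^e}$ gives the second formula in the same way. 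So your heuristic that the $S^2$ arises from the two antipodes in $\Theta$ and $\Theta^{-1}$ is accurate, and your observation that bijectivity of $S$ is never needed is also correct. The only point worth making explicit in a written version is that the identification $\ext^n_{H^{op}}(k_\varepsilon,V\otimes H)\simeq V\otimes\ext^n_{H^{op}}(k_\varepsilon,H)$ is natural in the coefficient module, so that the endomorphism $S^2(b_{(1)})(-)a\otimes b_{(2)}(-)$, which does not preserve the direct-sum decomposition, still induces the expected map $\lambda\otimes(L_{b_{(2)}})_*$ on the $\ext$ level.
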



We thus see from the above result that $\HH^n(H,H^e)$ is free  
as a left and as a right $H$-module,  
hence in particular is flat as  a left and as a right $A$-module 
We thus obtain the following characterization of homological duality in the case of Hopf algebras.

\begin{thm}\label{duality Hopf}
Let  $H$ be a   Hopf algebra, and let $d\geq0$. The following assertions are equivalent.
\begin{enumerate}
    \item $H$ has homological duality in dimension $d$;
    \item $H$ is homologically smooth and $\ext_{H}^i( {}_\varepsilon k, H) = 0$  for $i\neq d$;
    \item $H$ is homologically smooth and $\ext_{H^{op}}^i( k_\varepsilon, H) = 0$  for $i\neq d$
    \item The $H$-module ${}_\varepsilon k$ is a Poincaré duality module of dimension $d$.
    \item The $H^{\rm op}$-module  $k_\varepsilon$ is a Poincaré duality module of dimension $d$.
\end{enumerate}
\end{thm}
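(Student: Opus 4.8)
The plan is to reduce all five conditions to the Poincaré duality criterion of Theorem \ref{thm:hdm} applied to the trivial modules, using Proposition \ref{iso H} as a dictionary. First I would observe that for a Hopf algebra the flatness requirement (ii) in Definition \ref{def : duality} is automatic: the remark following Proposition \ref{iso H} shows that $\HH^d(H,H^e)$ is free, hence flat, as a left and as a right $H$-module. Thus assertion (1) is equivalent to the bare statement that $H$ is homologically smooth, $\cd(H)=d$, and $\HH^i(H,H^e)=\{0\}$ for $i\neq d$. Next I would set up the key dictionary: by Proposition \ref{iso H} we have $\HH^i(H,H^e)\simeq \ext_H^i({}_\varepsilon k,H)\otimes H\simeq \ext_{H^{op}}^i(k_\varepsilon,H)\otimes H$, and since $H\neq\{0\}$ a $k$-vector space $V$ vanishes if and only if $V\otimes H$ does; hence for every $i$ the three spaces $\HH^i(H,H^e)$, $\ext_H^i({}_\varepsilon k,H)$ and $\ext_{H^{op}}^i(k_\varepsilon,H)$ vanish simultaneously.

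Assuming $H$ homologically smooth, I would then record the matching of dimensions. By Theorem \ref{smooth Hopf} both ${}_\varepsilon k$ and $k_\varepsilon$ are of type $FP$, so by Proposition \ref{prop:FP-FPinfty} (and the analogous statement for $\cd$ noted just after the definition of homological smoothness) each of $\cd(H)$, $\pdim_H({}_\varepsilon k)$ and $\pdim_{H^{op}}(k_\varepsilon)$ equals the largest degree in which the corresponding $\HH$ or $\ext$ space is nonzero, this largest degree being attained. By the dictionary above these three sets of nonvanishing degrees coincide, so $\cd(H)=\pdim_H({}_\varepsilon k)=\pdim_{H^{op}}(k_\varepsilon)$.

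With these two observations in hand the equivalences are essentially formal. For (2)$\Leftrightarrow$(4): under (2), Theorem \ref{smooth Hopf} gives ${}_\varepsilon k$ of type $FP$, and the vanishing $\ext_H^i({}_\varepsilon k,H)=\{0\}$ for $i\neq d$ together with the attained-maximum characterization of $\pdim_H({}_\varepsilon k)$ forces $\pdim_H({}_\varepsilon k)=d$; Theorem \ref{thm:hdm} then yields that ${}_\varepsilon k$ is a Poincaré duality module of dimension $d$, and conversely Theorem \ref{thm:hdm} and Theorem \ref{smooth Hopf} unwind (4) back to (2). The same argument on the opposite side gives (3)$\Leftrightarrow$(5), and (2)$\Leftrightarrow$(3) is immediate from the dictionary. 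Finally (1)$\Leftrightarrow$(2): the reduced form of (1) from the first paragraph translates, via the dictionary and the equality $\cd(H)=\pdim_H({}_\varepsilon k)$, into exactly the conditions of (2).

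I expect the only genuinely delicate point to be the bookkeeping of the dimension $d$: conditions (2) and (3) only assert vanishing away from $d$, so I must make sure this actually pins $d$ down as the common projective/cohomological dimension rather than allowing the top space to vanish as well. This is exactly where the fact that the maximum in Proposition \ref{prop:FP-FPinfty} is attained for a nonzero module of type $FP$ (so that $\ext_H^{d}({}_\varepsilon k,H)\neq\{0\}$, equivalently $\HH^d(H,H^e)\neq\{0\}$) is needed; everything else is a transport of vanishing along the isomorphisms of Proposition \ref{iso H}.
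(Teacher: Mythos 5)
Your proposal is correct and follows exactly the route the paper intends: the paper gives no written proof of Theorem \ref{duality Hopf}, presenting it as an immediate consequence of Proposition \ref{iso H} (the ``dictionary'' and the freeness, hence flatness, of $\HH^n(H,H^e)$ over $H$), combined with Theorem \ref{smooth Hopf}, Theorem \ref{thm:hdm} and Proposition \ref{prop:FP-FPinfty}. You have simply spelled out the details the paper leaves implicit, including the genuinely needed observation that the attained maximum in Proposition \ref{prop:FP-FPinfty} pins down $d$ as the common cohomological/projective dimension.
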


\begin{rem}
The above result recovers, when $H$ is a group algebra, the homological duality theory for groups due to Bieri-Eckman \cite{bieri_groups_1973}.
\end{rem}

The twisted Calabi-Yau property also has a particularly convenient formulation in the case of Hopf algebras.
The first part of the following result is  \cite[Lemma 2.15]{van_oystaeayen_cleft_2016}, while the second part is a direct consequence of Proposition \ref{iso H}.

\begin{prop}\label{Prop : CY Hopf}
    Let $H$ be a Hopf algebra with homological duality in dimension $d$.
    
    If $H$ is twisted Calabi-Yau with Nakayama automorphism $\sigma$, then $\ext^d_H({}_\varepsilon k,H) \simeq k_\xi$ (resp. $\ext^d_{H^{op}}(k_\varepsilon,H) \simeq {}_\eta k$) as right (resp. left) $H$-modules where $\xi = \varepsilon\circ \sigma$ (resp. $\eta = \varepsilon\circ \sigma^{-1}$). 

    Conversely, if  $\ext^d_H({}_\varepsilon k,H) \simeq k_\xi$ (resp. $\ext^d_{H^{op}}(k_\varepsilon,H) \simeq {}_\eta k$) as right (resp. left) $H$-modules then $H$ is twisted Calabi-Yau with Nakayama automorphism given by $\sigma (h) = \xi(h_{(1)}) S^2(h_{(2)})$ (resp. $\sigma (h) = \eta(h_{(2)}) S^2(h_{(1)})$) for $h\in H$.
\end{prop}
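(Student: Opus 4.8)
The plan is to prove Proposition \ref{Prop : CY Hopf} by leveraging the explicit description of $\HH^d(H,H^e)$ as a right $H^e$-module furnished by Proposition \ref{iso H}, and by translating back and forth between the $H^e$-module (i.e. $H$-bimodule) picture and the one-sided $H$-module picture via the dualizing bimodule $D = \HH^d(H,H^e)$. Since $H$ has homological duality in dimension $d$, we have $D \simeq \HH^d(H,H^e)$, and the twisted Calabi-Yau condition is by definition the statement that $D \simeq H_\sigma$ as $H$-bimodules, where $H_\sigma$ denotes $H$ with bimodule structure $a \cdot x \cdot b = a x \sigma(b)$ (or the relevant convention). The whole proof is really a dictionary between the bimodule $D$ and the right $H$-module $\ext^d_H({}_\varepsilon k, H)$, where the latter is obtained from $D$ essentially by ``dividing out'' one tensor factor as in Proposition \ref{iso H}.

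For the forward direction, I would start from $D \simeq H_\sigma$ and feed this into the isomorphism $\HH^d(H,H^e) \simeq \ext_H^d({}_\varepsilon k,H) \otimes H$ of Proposition \ref{iso H}. Writing $V = \ext_H^d({}_\varepsilon k, H)$, which is a right $H$-module, I must match the right $H^e$-action on $V \otimes H$ given by the explicit formula $(y\otimes h) \leftarrow (a\otimes b) = y a_{(1)} \otimes b h S^2(a_{(2)})$ against the bimodule structure on $H_\sigma$. The key computation is to identify the ``trivial'' direction: by restricting the bimodule action of $H_\sigma$ and comparing orbits of $1 \otimes 1$, one reads off that $V$ must be one-dimensional, hence $V \simeq k_\xi$ for some algebra morphism (character) $\xi : H \to k$, and one then extracts the relation between $\xi$ and $\sigma$. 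Concretely, the right $H$-action on the $V$-factor is governed by the $a_{(1)}$ slot while the $H$-factor twists by $S^2(a_{(2)})$; comparing with $x \mapsto x\sigma(a)$ in $H_\sigma$ and applying $\varepsilon$ appropriately should yield $\sigma(h) = \xi(h_{(1)}) S^2(h_{(2)})$, equivalently $\xi = \varepsilon \circ \sigma$ after using $\varepsilon \circ S^2 = \varepsilon$ and counit axioms. The first part of the statement for the forward direction is cited from \cite[Lemma 2.15]{van_oystaeayen_cleft_2016}, so I would simply invoke that and concentrate the self-contained work on the converse.

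For the converse, I would reverse this: assume $V = \ext^d_H({}_\varepsilon k, H) \simeq k_\xi$ as right $H$-modules, and substitute into Proposition \ref{iso H} to get $D \simeq \HH^d(H,H^e) \simeq k_\xi \otimes H$ as right $H^e$-modules, with the explicit action $(1 \otimes h) \leftarrow (a \otimes b) = \xi(a_{(1)}) \otimes b h S^2(a_{(2)}) = 1 \otimes b h S^2(a_{(2)}) \xi(a_{(1)})$. The vector space $k_\xi \otimes H$ is then naturally identified with $H$ (via $1 \otimes h \mapsto h$), and I would transport this right $H^e$-action to an $H$-bimodule structure on the underlying space $H$ using the bimodule/$A^e$ conventions fixed in the Notations (namely $m \cdot (a \otimes b) = bma$). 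The task is to check that this resulting bimodule is isomorphic to $H_\sigma$ with $\sigma(h) = \xi(h_{(1)}) S^2(h_{(2)})$: the left action comes out as ordinary left multiplication (from the $b$ slot) and the right action as $h \cdot a = \xi(a_{(1)}) h S^2(a_{(2)}) = h\,\sigma(a)$, exactly giving the twisted bimodule. I would also verify that this $\sigma$ is indeed an algebra automorphism, which follows since it is a convolution-type product of the character $\xi$ and the algebra automorphism $S^2$ (invertibility of the antipode, hypothesized in the ambient setting, makes $S^2$ bijective, and $\xi$ being a character guarantees multiplicativity of $\sigma$).

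The main obstacle I expect is bookkeeping the numerous conflicting conventions rather than any deep idea: one must keep straight (a) the left-versus-right $H^e$-module conventions for bimodules, (b) the precise form of the action formula in Proposition \ref{iso H} with its $S^2$-twist and its split between the two tensor slots, and (c) the placement of $h_{(1)}$ versus $h_{(2)}$ in the Sweedler notation, since swapping them produces $\sigma^{-1}$ instead of $\sigma$ and is exactly what distinguishes the left-module statement (with $\eta = \varepsilon \circ \sigma^{-1}$) from the right-module statement. The identity $\varepsilon \circ S = \varepsilon$ (hence $\varepsilon \circ S^2 = \varepsilon$) and the counit axioms will be used repeatedly to collapse Sweedler sums, and the only genuinely non-formal point is confirming that the candidate $\sigma$ is multiplicative and bijective, which I would handle by writing $\sigma$ as the convolution of $\xi \cdot (-)$ with $S^2$ and noting both factors are algebra endomorphisms with $\xi$ a character. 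Once the conventions are pinned down, both implications reduce to a direct comparison of two explicit bimodule structures on $H$.
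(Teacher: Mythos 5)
Your proposal is correct and follows essentially the same route as the paper, which likewise cites \cite[Lemma 2.15]{van_oystaeayen_cleft_2016} for the forward direction and obtains the converse as a direct consequence of Proposition \ref{iso H} by transporting the explicit right $H^e$-action on $\ext_H^d({}_\varepsilon k,H)\otimes H \simeq k_\xi \otimes H$ to the bimodule $H_\sigma$. Your sketch simply spells out the bookkeeping that the paper leaves implicit.
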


We refer the reader to \cite{brown_dualising_2008} for various examples of twisted Calabi-Yau Hopf algebras.

\section{Exact sequences of Hopf algebras}

\subsection{Definitions}

\begin{defi}\label{def : exact sequence}
A sequence of Hopf algebra maps 
\[k \longrightarrow B \overset{i}{\longrightarrow} A \overset{p}{\longrightarrow} H \longrightarrow k\]
is said to be exact \cite{andruskiewitsch_extensions_1996} if the following conditions hold:

\begin{enumerate}[label = (\arabic*), parsep=0cm,itemsep=0.1cm,topsep=0cm]
    \item $i$ is injective and $p$ is surjective,
    \item $\Ker(p) = i(B)^+ A = A i(B)^+$,
    \item $i(B) = A^{co H} = {^{co H}A}$
\end{enumerate}
where $A^{co H}= \{a \in A,~ a_{(1)}\otimes p(a_{(2)}) = a\otimes 1\}$ and ${}^{co H}A= \{a \in A,~ p(a_{(1)})\otimes a_{(2)} = 1\otimes a\}$.
\end{defi}

\begin{ex}
    Exact sequences of groups correspond, as expected, to exact sequences of their group algebras.
\end{ex}

\begin{ex}
    Let $\Gamma$ be a discrete group acting on a Hopf algebra $A$ \textit{via} a group morphism $\alpha : \Gamma \to {\rm Aut}(A)$. Recall that the crossed product $A \rtimes k \Gamma$ has $A \otimes k\Gamma$ as underlying vector space and product defined by $$(a\otimes g) \cdot (b\otimes h) = a \alpha_g(b) \otimes gh \quad \text{ for } \quad  a,b\in A \quad \text{ and } \quad g,h\in\Gamma.$$
    This algebra is naturally endowed with a Hopf algebra structure defined by $$\Delta (a\otimes g) = a_{(1)} \otimes g \otimes a_{(2)} \otimes g, \quad \varepsilon(a\otimes g) = \varepsilon(a), \quad S(a\otimes g) = \alpha_{g^{-1}}\bigl(S(a)\bigr) \otimes g^{-1}$$ for any $a\in A$ and $g\in \Gamma$. 
    Then $k\longrightarrow A \overset{\id \otimes u}{\longrightarrow} A \rtimes k \Gamma \overset{\varepsilon \otimes \id}{\longrightarrow} k\Gamma \longrightarrow k$ is an exact sequence of Hopf algebras.
\end{ex}

\begin{rem}\label{rem:exact}
There are a number of situations where some of the axioms of an exact sequence of Hopf algebras automatically hold.   Let $A$ be a Hopf algebra with bijective antipode.
\begin{enumerate}
    \item If $B\subset A$ is a Hopf subalgebra such that $B^+A=AB^+$, then $B^+A$ is a Hopf ideal, and if $A$ is faithfully flat as a left or right $B$-module, then the sequence $k \to B \to A \to A/B^+A\to k$ is exact, by (for example) \cite[Lemma 1.3]{schneider92}. Notice that faithful flatness automatically holds if $A$ is cosemisimple \cite{chi14}.
    \item If $p: A \to H$ is a surjective Hopf algebra map  such that $A^{co H} = {^{co H}A}$, then if $p : A \to H$ is faithfully coflat, then $k \to A^{co H} \to A \to H\to k$ is an exact sequence of Hopf algebras, by \cite[Theorem 2]{tak79} combined with \cite[Remark 1.3]{mulsch}. Notice that $p: A \to H$ is automatically faithfully coflat if $H$ is cosemisimple. 
\end{enumerate}
\end{rem}

We will extensively use the following well-known fact.

\begin{lemma}\label{lem : B stable}
    Let $k \longrightarrow B \overset{i}{\longrightarrow} A \overset{p}{\longrightarrow} H \longrightarrow k$ be an exact sequence of Hopf algebras, then $B$ is stable under the adjoint actions of $A$ : for $b\in B$ and $a\in A$, we have $a_{(1)}bS(a_{(2)}) \in B$ and $S(a_{(1)})b a_{(2)} \in B$. If moreover $A$ has bijective antipode, we have as well $a_{(2)}bS^{-1}(a_{(1)}) \in B$ and $S^{-1}(a_{(2)})b a_{(1)} \in B$.
\end{lemma}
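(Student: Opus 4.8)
The plan is to exploit axiom (3) of Definition \ref{def : exact sequence}, namely $i(B) = A^{co H} = {}^{co H}A$, which reduces the statement to showing that these coinvariant subspaces are stable under the relevant adjoint actions. The essential tool is that the two comodule maps
\[
\rho = (\id \otimes p)\circ \Delta : A \to A \otimes H, \qquad \lambda = (p \otimes \id)\circ \Delta : A \to H \otimes A
\]
are \emph{algebra} homomorphisms, being composites of the algebra maps $\Delta$ and $\id \otimes p$ (resp. $p \otimes \id$). By definition $A^{co H} = \{a : \rho(a) = a \otimes 1\}$ and ${}^{co H}A = \{a : \lambda(a) = 1 \otimes a\}$, so everything comes down to applying $\rho$ and $\lambda$ to the candidate elements.

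First I would treat $c = a_{(1)} b S(a_{(2)})$. Using that $\rho$ is multiplicative together with $\rho(b) = b \otimes 1$ (as $b \in A^{co H}$), I expand $\rho(c) = \rho(a_{(1)})\,(b \otimes 1)\,\rho(S(a_{(2)}))$. Substituting $\rho(a_{(1)}) = a_{(1)} \otimes p(a_{(2)})$ and $\rho(S(x)) = S(x_{(2)}) \otimes S(p(x_{(1)}))$ and re-indexing by coassociativity, the $H$-component of $\rho(c)$ becomes $p(a_{(2)})S(p(a_{(3)}))$, which collapses to $\varepsilon(a_{(2)})1_H$ by the antipode axiom in $H$ and the identity $\varepsilon \circ p = \varepsilon$; the counit axiom then yields $\rho(c) = c \otimes 1$, i.e. $c \in A^{co H} = B$. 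The symmetric computation with $\lambda$ shows $S(a_{(1)}) b a_{(2)} \in {}^{co H}A = B$, this time using $\lambda(b) = 1 \otimes b$ and the mirror identity $S(p(a_{(2)}))p(a_{(3)}) = \varepsilon(a_{(2)})1_H$.

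For the two remaining expressions I would assume $S$ bijective and pass to the Hopf algebra $A^{cop}$, whose antipode is $S^{-1}$ and whose coproduct is $\Delta^{cop}(a) = a_{(2)} \otimes a_{(1)}$; the same map $p$ is then a surjective Hopf algebra map $A^{cop} \to H^{cop}$. The two adjoint actions associated with $A^{cop}$ are precisely $b \mapsto a_{(2)} b S^{-1}(a_{(1)})$ and $b \mapsto S^{-1}(a_{(2)}) b a_{(1)}$, so the first part of the argument, applied to $A^{cop} \to H^{cop}$, shows they preserve $(A^{cop})^{co H^{cop}}$ and ${}^{co H^{cop}}(A^{cop})$ respectively. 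A direct check, obtained simply by flipping the defining tensor identities, shows $(A^{cop})^{co H^{cop}} = {}^{co H}A$ and ${}^{co H^{cop}}(A^{cop}) = A^{co H}$, both equal to $B$; this gives $a_{(2)} b S^{-1}(a_{(1)}) \in B$ and $S^{-1}(a_{(2)}) b a_{(1)} \in B$.

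The individual computations are routine Sweedler manipulations and present no genuine obstacle; the only points requiring attention are the bookkeeping of the four coproduct legs and the (immediate) verification that $\rho$ and $\lambda$ are algebra maps. The mildly delicate step is the last one, namely correctly matching the coinvariants of $A^{cop}$ over $H^{cop}$ with ${}^{co H}A$ and $A^{co H}$, which is exactly where the bijectivity of the antipode is genuinely used.
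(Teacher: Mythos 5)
Your proof is correct and follows essentially the same route as the paper: both verify that $\rho=(\id\otimes p)\circ\Delta$ and $\lambda=(p\otimes\id)\circ\Delta$ send the candidate elements to $c\otimes 1$ and $1\otimes c$ respectively, using that $b$ lies in both coinvariant subspaces and that $p(a_{(1)})S(p(a_{(2)}))=\varepsilon(a)1_H=S(p(a_{(1)}))p(a_{(2)})$. Your packaging of the $S^{-1}$ cases via $A^{cop}$ (with the identifications $(A^{cop})^{co\,H^{cop}}={}^{co\,H}A$ and ${}^{co\,H^{cop}}(A^{cop})=A^{co\,H}$) is a clean way of making precise what the paper leaves to the reader as ``similar arguments''.
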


\begin{proof}
    For $b \in B$ and $a\in A$, we have 
    \[\begin{aligned}
        \bigl(a_{(1)}bS(a_{(2)})\bigr)_{(1)} \otimes p\bigl(\bigl(a_{(1)}bS(a_{(2)})\bigr)_{(2)}\bigr) &= a_{(1)}b_{(1)}S(a_{(3)})_{(1)}\otimes p(a_{(2)})p(b_{(2)})p(S(a_{(3)})_{(2)}) \\ 
        &= a_{(1)}bS(a_{(4)})\otimes p(a_{(2)}S(a_{(3)}))\\ 
        &= a_{(1)}bS(a_{(3)})\otimes \varepsilon(a_{(2)})
        = a_{(1)}bS(a_{(2)}) \otimes 1
    \end{aligned}\]
    hence $a_{(1)}bS(a_{(2)}) \in A^{co H} = B$. 
    We also have \[\begin{aligned}
        p\bigl(\bigl(S(a_{(1)})b a_{(2)})\bigr)_{(1)}\bigr) \otimes \bigl(S(a_{(1)})b a_{(2)}\bigr)_{(2)} &= p\bigl(S(a_{(1)})_{(1)}\bigr) p(b_{(1)}) p(a_{(2)}) \otimes S(a_{(1)})_{(2)}b_{(2)} a_{(3)}\\
        &= p\bigl(S(a_{(2)})\bigr) p(a_{(3)}) \otimes S(a_{(1)})b a_{(4)}\\
        &= \varepsilon(a_{(2)}) \otimes S(a_{(1)})ba_{(3)}
        = 1 \otimes S(a_{(1)})b a_{(2)}
    \end{aligned}\]
  and  hence $S(a_{(1)})b a_{(2)} \in {}^{co H}A = B$. The last assertions follow from the first ones if $B$ is $S^{-1}$-stable, or can be proved directly without this assumption, with similar arguments as above. This is left to the reader.
\end{proof}

\subsection{Spectral sequences for exact sequences of Hopf algebras}\label{Sec:Spec Seq}

Let $k \longrightarrow B \overset{i}{\longrightarrow} A \overset{p}{\longrightarrow} H \longrightarrow k$ be an exact sequence of Hopf algebras. In this subsection we construct spectral sequences connecting the (co)homology groups $\ext^\bullet_{A}({}_\varepsilon k,-)$ (resp. $\tor_\bullet^A(k_\varepsilon , -)$) with $\ext^\bullet_{B}({}_\varepsilon k,-)$ and $\ext^\bullet_{H}({}_\varepsilon k,-)$ (resp. $\tor_\bullet^B(k_\varepsilon , -)$ and $\tor_\bullet^H(k_\varepsilon , -)$). These spectral sequences generalize the classical Lyndon-Hochschild-Serre spectral sequences and can be deduced from Stefan's spectral sequences \cite{stefan_hochschild_1995} in the setting of Hopf-Galois extensions.  However we work them in detail, on the one hand for the sake of completeness, and on the other hand because there are some specific technicalities  (notably regarding certain $H$-actions) that are needed in the proof of Theorem \ref{th seq}

Following the ideas in \cite{stefan_hochschild_1995}, we first need to define a structure of $H$-module on the cohomology of $B$. 
In order to do so, we will use resolutions of the $A$-module ${}_\varepsilon k$ to compute the homology spaces $\ext_B^q({}_\varepsilon k,M)$ for a left $A$-module $M$, which is allowed thanks to the following well-known result.

\begin{lemma}
    Let $A$ be a Hopf algebra and $B \subset A$ a Hopf subalgebra such that $A$ is projective as a left $B$-module. If $P_\bullet $ is a resolution of the left $A$-module ${}_\varepsilon k$, then the restriction of $P_\bullet $ is a resolution of the left $B$-module ${}_\varepsilon k$.
\end{lemma}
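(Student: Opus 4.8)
The plan is to observe that restriction of scalars along the inclusion $i : B \hookrightarrow A$ is an exact functor that leaves the underlying complex of vector spaces (and all its differentials) unchanged, so the hypothesis ``$A$ projective over $B$'' is needed only to preserve projectivity degree by degree. Writing the resolution as an exact complex of left $A$-modules
\[\cdots \longrightarrow P_1 \longrightarrow P_0 \longrightarrow {}_\varepsilon k \longrightarrow 0,\]
with each $P_i$ projective over $A$, the goal is to see that this same sequence, viewed over $B$, is still an exact complex of left $B$-modules with projective terms augmented towards the trivial module.

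First I would settle the two easy points. Since $B$ is a Hopf subalgebra of $A$, its counit is the restriction $\varepsilon_B = \varepsilon_A \circ i$; hence the $B$-module obtained by restricting the trivial $A$-module ${}_\varepsilon k$ is precisely the trivial $B$-module ${}_\varepsilon k$, and the augmentation $P_0 \to {}_\varepsilon k$ is in particular $B$-linear. Exactness of the restricted complex is then immediate, because restriction of scalars changes neither the maps nor the underlying vector spaces: the sequence stays exact as a sequence of vector spaces, a fortiori as a complex of $B$-modules.

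The only point that genuinely uses the hypothesis is that each restricted $P_i$ is projective as a $B$-module, and this is the standard fact that restriction along $B \hookrightarrow A$ preserves projectivity once $A$ is projective over $B$. Indeed, if $P$ is a projective left $A$-module, it is a direct summand of a free module $A^{(I)}$; restricting to $B$, the module $A^{(I)}$ becomes a direct sum of copies of the left $B$-module $A$, which is projective by assumption, so its restriction is projective (a direct sum of projectives), and $P$, being a direct summand of it, is therefore projective over $B$ as well.

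I do not expect any real obstacle here, since the statement amounts to the observation that the forgetful functor ${}_A\mathcal{M} \to {}_B\mathcal{M}$ sends projectives to projectives under the $B$-projectivity of $A$. The only mild care needed is the identification of the restricted trivial module with ${}_\varepsilon k$, which is guaranteed by the compatibility $\varepsilon_B = \varepsilon_A \circ i$ of the counits in a Hopf subalgebra.
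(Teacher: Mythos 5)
Your proof is correct and is exactly the standard argument the paper is implicitly invoking (the lemma is stated there as well known, with no proof given): restriction of scalars preserves exactness and the trivial module, and each projective $A$-module, being a summand of some $A^{(I)}$, restricts to a summand of a direct sum of copies of the $B$-projective module $A$, hence stays projective over $B$. Nothing is missing.
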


We first define the $H$-module structure on $\ext^0_{B}({}_\varepsilon k,-)\simeq M^B = \left\{ m \in M,\ bm = \varepsilon(b) m, \ b\in B\right\}$.

\begin{lemma}\label{lemma:structure}
    Let $k \longrightarrow B \overset{i}{\longrightarrow} A \overset{p}{\longrightarrow} H \longrightarrow k$ be an exact sequence of Hopf algebras such that $A$ is flat as a left and right $B$-module and let $M$ be a left $A$-module.
    There is a left $H$-module structure  $\cdot$ on $\ext_B^0({}_\varepsilon k,M)$, defined by $$p(a) \cdot m = a \cdot m $$ for any $a\in A$ and $m\in \ext_B^0({}_\varepsilon k,M) \simeq M^B$.
\end{lemma}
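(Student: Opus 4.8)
The plan is to transport everything through the standard identification $\ext_B^0({}_\varepsilon k,M) = \Hom_B({}_\varepsilon k, M) \simeq M^B$ given by $f \mapsto f(1)$ (so that $m\in M^B$ corresponds to the map sending $1$ to $m$, the defining relation $i(b)\cdot m = \varepsilon(b)m$ being exactly $B$-linearity). Under this identification the asserted formula $p(a)\cdot m = a\cdot m$ is nothing but the restriction to $M^B$ of the left $A$-action on $M$, read off through $p$. Hence the whole content of the lemma reduces to two verifications: that $M^B$ is stable under the $A$-action (so that it is an $A$-submodule of $M$), and that $\Ker(p)$ annihilates $M^B$ (so that the $A$-action descends through $p$).

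For the stability I would start from the identity
\[
i(b)\,a \;=\; \sum a_{(1)}\bigl(S(a_{(2)})\,i(b)\,a_{(3)}\bigr) \qquad (a\in A,\ b\in B),
\]
which follows at once from the antipode axiom $\sum a_{(1)}S(a_{(2)}) = \varepsilon(a)1$ together with coassociativity. By Lemma \ref{lem : B stable} each factor $S(a_{(2)})\,i(b)\,a_{(3)}$ again lies in $i(B)$, so for $m\in M^B$ one may replace its action by the scalar $\varepsilon\bigl(S(a_{(2)})\,i(b)\,a_{(3)}\bigr) = \varepsilon(b)\,\varepsilon(a_{(2)})\varepsilon(a_{(3)})$. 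Substituting and collapsing the counits via $\sum \varepsilon(a_{(2)})\varepsilon(a_{(3)})\,a_{(1)} = a$ yields
\[
i(b)\cdot(a\cdot m) \;=\; \varepsilon(b)\,(a\cdot m) \qquad \text{for all } b\in B,
\]
which is precisely the statement that $a\cdot m \in M^B$. This short computation is the only point with any substance, and it is the step I would single out as the main obstacle: everything hinges on the adjoint-stability of $B$ supplied by Lemma \ref{lem : B stable}.

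For the annihilation I would invoke axiom (2) of Definition \ref{def : exact sequence}, namely $\Ker(p) = A\,i(B)^+$. Any $x\in\Ker(p)$ is a sum of terms $a\,i(b')$ with $b'\in B$ and $\varepsilon(b') = 0$; since $m\in M^B$ gives $i(b')\cdot m = \varepsilon(b')\,m = 0$, each such term kills $m$, whence $x\cdot m = 0$. Combining the two points, $M^B$ is an $A$-submodule of $M$ on which $\Ker(p)$ acts trivially, so its $A$-module structure factors uniquely through the algebra isomorphism $A/\Ker(p)\simeq H$ induced by $p$; this produces a left $H$-module structure characterized by $p(a)\cdot m = a\cdot m$, the well-definedness (independence of the lift $a$ of $p(a)$) and the module axioms being inherited directly from the $A$-action. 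I note that the flatness of $A$ over $B$ plays no role in this degree-zero statement; it will be needed only later, when lifting the construction to the higher cohomology $\ext_B^q({}_\varepsilon k, M)$.
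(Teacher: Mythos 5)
Your proposal is correct, and it organizes the argument the same way the paper does: identify $\ext_B^0({}_\varepsilon k,M)$ with $M^B$, check that the formula is independent of the lift $a$ of $p(a)$, and check that $M^B$ is stable under the $A$-action. The independence step is essentially identical to the paper's (write $a'-a=\sum_i a_i b_i$ with $b_i\in B^+$ and use $b_i\cdot m=\varepsilon(b_i)m=0$). Where you diverge is the stability step. The paper disposes of it in one line using axiom (2) of Definition \ref{def : exact sequence}: for $b\in B$ one has $(b-\varepsilon(b))a\in B^+A=AB^+$, so this element can be rewritten as $\sum_j a_j c_j$ with $c_j\in B^+$ and therefore kills $m\in M^B$. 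You instead expand $i(b)a=\sum a_{(1)}\bigl(S(a_{(2)})i(b)a_{(3)}\bigr)$ and invoke the adjoint-stability of $B$ from Lemma \ref{lem : B stable}, which ultimately rests on axiom (3) ($B={}^{co H}A$) rather than axiom (2). Both routes are legitimate; the paper's is shorter and uses only the two-sidedness of the coideal $B^+A$, while yours is more computational but makes visible the same adjoint-action mechanism that the paper later needs for the higher-degree action in Proposition \ref{structure 2} (where the formula $(p(a)\cdot f)(x)=a_{(1)}f(S(a_{(2)})x)$ is checked to be $B$-linear by exactly the kind of computation you perform here). Your closing remark that flatness of $A$ over $B$ is not used in degree zero is accurate.
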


\begin{proof}
    We have to check that the above formula is well defined. 
    Consider $a,\ a' \in A$ such that $p(a) = p(a')$: there exist $b_1,\ldots , b_m\in B^+$ and $a_1, \ldots a_m \in A$ such that $a' - a =  \sum\limits_ia_ib_i $.
    Then for $m\in \ext_B^0({}_\varepsilon k,M) \simeq M^B$, we have $a' \cdot m = (a +  \sum\limits_ia_i b_i ) \cdot m =  a \cdot m$ since $m \in M^B$. This proves that  our expression does not depend on the choice of $a\in A$.

    For $m \in M^B,\ a \in A$ and $b\in B$, we have $$(b-\varepsilon(b)) \cdot (a \cdot m)  = \bigl((b-\varepsilon(b))a\bigr) \cdot m$$ with $(b-\varepsilon(b))a \in  B^+A = AB^+$, hence $(b-\varepsilon(b))\cdot (a\cdot m) = 0$ and $a\cdot m \in M^B$. We conclude that  the above formula indeed defines an action of $H$ on $\ext^0_B({}_\varepsilon k ,M)$.
\end{proof}

We recall some terminology and a few key results used in \cite{stefan_hochschild_1995} in order to extend the $H$-module structure defined on $\ext_B^0({}_\varepsilon k,M)$ to $\ext_B^q({}_\varepsilon k,M)$ for any $q\geq 1$.

\begin{defi}
Let $\mathfrak{C}$ and $\mathfrak{D}$ be two abelian categories with enough injectives and let $T_\bullet = (T_n)_{n\in \mathbb{Z}}$ be a sequence of covariant functors, $T_n : \mathfrak{C} \to \mathfrak{D}$. Then  $T_\bullet$ is said to be a \textit{a $\delta$-functor} if for any short exact sequence $$0\to M' \to M \to M''\to 0$$ in $\mathfrak{C}$, there are natural connecting homomorphism $$\delta :T_n(M'') \to T_{n+1}(M')$$ and all compositions are zero in the sequence $$T_n(M') \to T_n(M) \to T_n(M'') \overset{\delta}{\to}T_{n+1}(M').$$

If in addition $T_n = 0$ for $n<0$ and the above sequence is exact, then $T_\bullet$ is said to be a \textit{cohomological functor}.
\end{defi}

\begin{defi}
If $T_\bullet= (T_n)$ and $S_\bullet= (S_n)$ are $\delta$-functors, a morphism of $\delta$-functors from $S_\bullet$ to $T_\bullet$ is a family $\varphi = (\varphi_n)_{n\in\mathbb{Z}}$ of natural transformations $\varphi_n : S_n \to T_n$ such that the square 
\[\begin{tikzcd}    S_n(M'') \arrow[d,"\varphi_n(M'')"'] \arrow[r,"\delta_S"] & S_{n+1}(M') \arrow[d,"\varphi_{n+1}(M')"]\\
    T_n(M'') \arrow[r,"\delta_T"'] & T_{n+1}(M')\\
\end{tikzcd}\]
commutes for every exact sequence $0 \to M' \to M \to M'' \to 0$ in $\mathfrak{C}$ and every $n\in \mathbb{Z}$.
\end{defi}

\begin{defi}
A cohomological functor $T_\bullet=(T_n) : \mathfrak{C} \to \mathfrak{D}$ is \textit{coeffaceable} if, for each $n>0$, any object $M$ in $\mathfrak{C}$ can be embedded in an object $N$ such that $T_n(N) = 0$.
\end{defi}

\begin{thm}\label{delta functors}
\cite[Theorem 7.5]{brown_cohomology_1982} Let $T_\bullet$ be a cohomological and coeffaceable functor. If $S_\bullet$ is a $\delta$-functor and $\varphi_0 : T_0 \to S_0$ is a natural transformation, then $\varphi_0$ extends uniquely to a map of $\delta$-functors $\varphi_\bullet : T_\bullet \to S_\bullet$.
\end{thm}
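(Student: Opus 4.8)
The plan is to construct the family $\varphi_\bullet=(\varphi_n)$ by induction on $n\geq0$, the degree-$0$ term being the given $\varphi_0$, using the coeffaceability of $T_\bullet$ to run a dimension-shifting argument. Suppose $\varphi_0,\dots,\varphi_n$ have been defined, are natural, and commute with the connecting homomorphisms in all degrees below $n$; I then build $\varphi_{n+1}$.

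Fix an object $M$. Since $n+1>0$ and $T_\bullet$ is coeffaceable, I may embed $M$ into an object $N$ with $T_{n+1}(N)=0$; writing $M''=N/M$ gives a short exact sequence $0\to M\to N\to M''\to0$. As $T_\bullet$ is cohomological, the piece $T_n(N)\xrightarrow{\rho_T}T_n(M'')\xrightarrow{\delta_T}T_{n+1}(M)\to T_{n+1}(N)=0$ is exact, so $\delta_T$ is surjective with kernel $\operatorname{im}\rho_T$. This surjectivity --- the key payoff of coeffaceability --- allows me to define $\varphi_{n+1}(M)$ on all of $T_{n+1}(M)$ by the dimension-shift rule
$$\varphi_{n+1}(M)\bigl(\delta_T(y)\bigr):=\delta_S\bigl(\varphi_n(M'')(y)\bigr),\qquad y\in T_n(M'').$$

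It then remains to check, in order: (a) independence of the choice of preimage $y$; (b) independence of the chosen embedding $M\hookrightarrow N$; (c) naturality in $M$; (d) compatibility with $\delta$ in degree $n$; and (e) uniqueness. For (a), if $\delta_T(y)=\delta_T(y')$ then $y-y'=\rho_T(z)$ for some $z\in T_n(N)$, and naturality of $\varphi_n$ for the quotient map $N\to M''$ yields $\varphi_n(M'')\rho_T(z)=\rho_S\varphi_n(N)(z)$; applying $\delta_S$ and using the $\delta$-functor axiom $\delta_S\circ\rho_S=0$ shows the two values coincide. Items (b)--(d) all reduce to one diagram-chasing principle: a morphism of short exact sequences induces, through the naturality of $\varphi_n$ and of both families of connecting maps, a matching of the corresponding dimension-shift formulas. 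Concretely, for (b) I compare two coeffacing sequences by mapping both into a third (for instance one built from their direct sum), for (c) I lift a morphism $M\to M_1$ to a morphism of the chosen coeffacing sequences, and for (d) I compare a given short exact sequence with the coeffacing sequence of its subobject. Finally (e) is forced: any extension $\psi_\bullet$ satisfies $\psi_{n+1}(\delta_T y)=\delta_S(\psi_n(M'')y)=\delta_S(\varphi_n(M'')y)$ by $\delta$-compatibility and the inductive hypothesis $\psi_n=\varphi_n$, and surjectivity of $\delta_T$ gives $\psi_{n+1}=\varphi_{n+1}$.

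The heart of the matter is exactly items (b)--(d): the formula rests on non-canonical choices --- both the embedding $N$ and the preimage $y$ --- so all the real content lies in verifying that distinct choices are intertwined by morphisms of short exact sequences, to which naturality of $\varphi_n$ and of the connecting homomorphisms applies. The point requiring care is that $S_\bullet$ is only assumed to be a $\delta$-functor, with no exactness available: on the $S$-side I may use nothing beyond $\delta_S\circ\rho_S=0$ and the naturality of $\delta_S$, so the surjectivity that makes the whole construction possible must be supplied entirely by the cohomological, coeffaceable functor $T_\bullet$.
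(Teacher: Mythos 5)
Your proof is correct and is precisely the standard dimension-shifting argument of \cite[Theorem 7.5]{brown_cohomology_1982}, which the paper cites without reproving (and whose key formula $\varphi_{n+1}(M)\bigl(\delta_T(y)\bigr)=\delta_S\bigl(\varphi_n(M'')(y)\bigr)$ is exactly the mechanism the paper later exploits when making the $\delta$-functor extension explicit in Proposition \ref{structure 2}). You correctly isolate the two essential points: coeffaceability supplies the surjectivity of $\delta_T$, while well-definedness, independence of choices, naturality and uniqueness all reduce to half-exactness of $T_\bullet$, the relation $\delta_S\circ\rho_S=0$, and naturality applied to morphisms of short exact sequences.
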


\begin{prop}\label{Prop:structure H mod}
Let $k \longrightarrow B \overset{i}{\longrightarrow} A \overset{p}{\longrightarrow} H \longrightarrow k$ be an exact sequence of Hopf algebras such that $A$ is flat as a left and right $B$-module and let $M$ be a left $A$-module.
There is a left $H$-action on $\ext_B^q({}_\varepsilon k,M)$ for $q\geq 0$ which extends the structure on $\ext_B^0({}_\varepsilon k,M)$ defined in Lemma \ref{lemma:structure}.
\end{prop}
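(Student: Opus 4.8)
The plan is to realize $\ext_B^\bullet({}_\varepsilon k,-)$ as a cohomological and coeffaceable functor on the category ${}_A\mathcal{M}$ of left $A$-modules, and then, for each $a\in A$, to define the action of $a$ as the unique extension to a morphism of $\delta$-functors of the degree-$0$ action $m\mapsto a\cdot m$ supplied by Lemma \ref{lemma:structure}, invoking Theorem \ref{delta functors}. Once these extensions are in hand, the fact that they assemble into an $A$-module structure factoring through $H$ will follow formally from the uniqueness clause of that theorem.

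First I would set $T_q=\ext_B^q({}_\varepsilon k,-)$, viewed as a functor ${}_A\mathcal{M}\to \Vect$ by restricting a left $A$-module to $B$ before applying $\ext_B^q({}_\varepsilon k,-)$. Since restriction ${}_A\mathcal{M}\to {}_B\mathcal{M}$ is exact and $\ext_B^\bullet({}_\varepsilon k,-)$ is a derived functor, $T_\bullet$ is a cohomological functor: the connecting maps and the long exact sequences are inherited from those of $\ext_B$ applied to the restriction of a short exact sequence of $A$-modules, and $T_q=0$ for $q<0$. Here $T_0(M)=\Hom_B({}_\varepsilon k,M)=M^B$, so Lemma \ref{lemma:structure} concerns exactly $T_0$.

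Next I would establish coeffaceability. For a left $A$-module $M$, consider the coinduced module $N=\Hom_k(A,M)$ with left $A$-action $(a'\cdot f)(x)=f(xa')$, together with the map $M\to N$, $m\mapsto f_m$ where $f_m(x)=x\cdot m$; it is $A$-linear since $f_{a'm}=a'\cdot f_m$, and injective since $f_m(1)=m$. Regarding $A$ as a right $B$-module, the induced left $B$-module $N$ satisfies, by tensor-hom adjunction, $\Hom_B(-,N)\cong \Hom_k(A\otimes_B -, M)$; because $A$ is flat as a right $B$-module and $\Hom_k(-,M)$ is exact over the field $k$, this contravariant functor is exact, so $N$ is injective as a left $B$-module and $T_q(N)=\ext_B^q({}_\varepsilon k,N)=0$ for every $q>0$. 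Thus $T_\bullet$ is coeffaceable, and this is the step where right $B$-flatness of $A$ is used.

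Finally, for each $a\in A$ the assignment $\varphi_0^a(M)\colon m\mapsto a\cdot m$ defines a natural transformation $T_0\to T_0$ (it lands in $M^B$ and is natural in $M$ by Lemma \ref{lemma:structure}), which by Theorem \ref{delta functors} extends uniquely to a morphism of $\delta$-functors $\varphi_\bullet^a\colon T_\bullet\to T_\bullet$. Uniqueness yields all the needed identities: $\varphi^{a+a'}=\varphi^a+\varphi^{a'}$ and $\varphi^{\lambda a}=\lambda\varphi^a$ (both sides extend the same degree-$0$ map), and $\varphi^{aa'}=\varphi^a\circ\varphi^{a'}$, $\varphi^1=\id$ (the composite extends $m\mapsto (aa')\cdot m$), so that $a\cdot\xi:=\varphi^a(\xi)$ is a left $A$-module structure on each $\ext_B^q({}_\varepsilon k,M)$ agreeing with Lemma \ref{lemma:structure} in degree $0$. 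For $b\in B^+$ one has $\varphi_0^b=0$, since $b\cdot m=\varepsilon(b)m=0$ on $M^B$, whence $\varphi^b=0$; multiplicativity then forces $\varphi^a=0$ for all $a\in AB^+=\Ker(p)$, so $\varphi^a$ depends only on $p(a)$ and $p(a)\cdot\xi:=\varphi^a(\xi)$ is a well-defined left $H$-module structure extending the one of Lemma \ref{lemma:structure}. The main obstacle I anticipate is the coeffaceability step, namely producing the functorial $A$-module embedding into a $B$-injective module and verifying the vanishing of $T_q(N)$; the remaining verifications reduce to bookkeeping with the uniqueness of $\delta$-functor extensions.
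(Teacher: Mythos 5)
Your proof is correct and follows essentially the same route as the paper: realize $\ext_B^\bullet({}_\varepsilon k,-)$ composed with restriction as a cohomological, coeffaceable functor on ${}_A\mathcal{M}$ and extend the degree-zero action uniquely via Theorem \ref{delta functors}, with the uniqueness clause supplying multiplicativity. The only cosmetic differences are that you coefface directly with the coinduced module $\Hom_k(A,M)$ (the paper instead embeds $M$ into an injective $A$-module and checks that its restriction to $B$ is injective; both hinge on right $B$-flatness of $A$ via the same adjunction), and that you index the natural transformations by $a\in A$ and verify at the end that they kill $\Ker(p)$, whereas the paper works with $h\in H$ from the start.
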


\begin{proof}
    We first check that if $R : {}_A\mathcal{M} \to {}_B\mathcal{M}$ denotes the functor of restriction of scalars, then the cohomological functor $\ext_B^\bullet({}_\varepsilon k,-)\circ R: {}_{A}\mathcal{M}\to k-\Vect$ is coeffaceable. 

    Let $M$ be an $A$-module, there exists $N$ an injective $A$-module such that $M \subset N$. The functor $R$ has $A\otimes_B -$ as left adjoint, hence we have $\Hom_B(-,R(N)) \simeq \Hom_A(A\otimes_B-_, N)$. The right $B$-module $A$ is flat and $N$ is injective hence $\Hom_A(A\otimes_B -, N)$ is exact and $R(N)$ is an injective $B$-module. Thus, we have $\ext_B^n({}_\varepsilon k,-)\circ R (N) = 0$ for $n\geq 1$ and $\ext_B^\bullet({}_\varepsilon k,-)\circ R$ is coeffaceable.

    For any $h \in H$ let $\rho_0^h(-) : \ext_B^0({}_\varepsilon k,-)\circ R \to \ext_B^0({}_\varepsilon k,-)\circ R$ be the natural morphism
    given by  $\rho_0^h(M)(m) = h \cdot m$, for any $A$-module $M$, $m\in \ext_B^0({}_\varepsilon k,M)$ and $h\in H$. By Theorem \ref{delta functors} there is a unique morphism of $\delta$-functors $\rho_\bullet^h(-) : \ext_B^\bullet({}_\varepsilon k,-)\circ R \to \ext_B^\bullet({}_\varepsilon k,-)\circ R$ which extends $ \rho_0^h(-)$. 
We have, for any $h,k\in H$, $\rho_0^{hk}(-) = \rho_0^{h}(-) \circ \rho_0^{k}(-) $ hence uniqueness in Theorem \ref{delta functors} also gives that$\rho_\bullet^{hk}(-) = \rho_\bullet^{h}(-) \circ \rho_\bullet^{k}(-)$, and $\rho_\bullet^h(-)$ defines a structure of $H$-module.
\end{proof}

\begin{lemma}\label{acyclicity}
    Let $k \longrightarrow B \overset{i}{\longrightarrow} A \overset{p}{\longrightarrow} H \longrightarrow k$ be an exact sequence of Hopf algebras and let $M$ be an injective left $A$-module. Then $$\ext_{H}^q({}_\varepsilon k,M^B) = 0\ {\rm for } \ q \geq 1.$$
\end{lemma}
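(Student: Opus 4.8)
The plan is to prove the stronger statement that $M^B$ is in fact an \emph{injective} left $H$-module whenever $M$ is an injective left $A$-module; acyclicity for $\ext_H^q({}_\varepsilon k,-)$ is then immediate. This is precisely the input needed later to assemble the Grothendieck spectral sequence of the composite functor $(-)^A = (-)^H \circ (-)^B$, so proving injectivity here is both clean and exactly what is required downstream.

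First I would record a functorial description of $(-)^B$ together with its $H$-action. By the tensor--hom adjunction for the inclusion $B \hookrightarrow A$ (i.e. the adjunction between $A\otimes_B -$ and restriction of scalars), there is a natural isomorphism
$$M^B = \Hom_B({}_\varepsilon k, M) \simeq \Hom_A(A \otimes_B {}_\varepsilon k, M).$$
Now $A \otimes_B {}_\varepsilon k \simeq A/AB^+ = A/\Ker(p)$, which by the exactness axioms (1)--(2) of Definition \ref{def : exact sequence} is a left $A$-module via $p$ and simultaneously a right $H$-module via right multiplication transported along $p$ (well defined because $\Ker(p)$ is a two-sided ideal). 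These two actions commute, so $P := A\otimes_B {}_\varepsilon k$ is an $(A,H)$-bimodule with underlying space $H$, and consequently $\Hom_A(P,M)$ carries a left $H$-module structure induced by the right $H$-action on $P$, namely $(h\cdot \phi)(x) = \phi(xh)$.

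The crucial verification is that this $H$-module structure on $\Hom_A(P,M)$ agrees, under the evaluation isomorphism $\phi \mapsto \phi(1)$, with the structure on $M^B$ from Lemma \ref{lemma:structure}. I would check this on elements: for $h = p(a)$ one computes $(h\cdot\phi)(1) = \phi(p(a)) = a\cdot \phi(1)$ by $A$-linearity, which is exactly the action $p(a)\cdot \phi(1) = a\cdot\phi(1)$ of Lemma \ref{lemma:structure}. I expect this matching of the two $H$-actions to be the main (if elementary) obstacle, since it is precisely the point where the specific technicalities regarding the $H$-actions enter; everything else is formal.

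Granting the identification $M^B \simeq \Hom_A(P,M)$ of $H$-modules, the conclusion follows quickly. For any left $H$-module $N$, the tensor--hom adjunction for the $(A,H)$-bimodule $P$ gives a natural isomorphism
$$\Hom_H(N, M^B) \simeq \Hom_H\bigl(N, \Hom_A(P,M)\bigr) \simeq \Hom_A(P \otimes_H N, M),$$
and $P \otimes_H N \simeq N$ with $A$ acting through $p$, i.e. $P\otimes_H N$ is the restriction of $N$ along $p : A \to H$. Thus $\Hom_H(-, M^B)$ is naturally isomorphic to the composite of the restriction functor ${}_H\mathcal{M} \to {}_A\mathcal{M}$ (which is exact) with $\Hom_A(-,M)$ (which is exact, as $M$ is injective). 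Being a composite of exact functors, $\Hom_H(-,M^B)$ is exact, so $M^B$ is an injective $H$-module, whence $\ext_H^q({}_\varepsilon k, M^B)=0$ for all $q\geq 1$.
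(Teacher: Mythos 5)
Your proof is correct, but it takes a genuinely different route from the paper's. You identify the functor $(-)^B$ on left $A$-modules with $\Hom_A(P,-)$ for the $(A,H)$-bimodule $P=A\otimes_B{}_\varepsilon k\simeq A/\Ker(p)\simeq H$, check that the induced $H$-action matches the one of Lemma \ref{lemma:structure}, and then note that $\Hom_A(P,-)$ is right adjoint to the exact functor of restriction along $p:A\to H$, hence preserves injectives; this gives the stronger conclusion that $M^B$ is an injective $H$-module. The paper instead embeds $M$ $A$-linearly into the coinduced module $\Hom_k(A,M)$, uses injectivity of $M$ to split $M$ off as a direct summand, identifies $\Hom_k(A,M)^B\simeq\Hom_k(H,M)$ as $H$-modules, and verifies acyclicity of the latter by a direct computation: for a projective resolution $\mathbf{P}^\bullet$ of ${}_\varepsilon k$ over $H$, the complex $\Hom_H(\mathbf{P}^\bullet,\Hom_k(H,M))\simeq\Hom_k(\mathbf{P}^\bullet,M)$ is exact because $k$ is a field. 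So the paper only concludes that $M^B$ is a direct summand of an $F_1$-acyclic module, which suffices for the Grothendieck spectral sequence, whereas your argument is more conceptual and yields injectivity outright; the price is that the burden of the proof shifts entirely onto matching the adjunction-induced $H$-action with that of Lemma \ref{lemma:structure}, a point you correctly identify as the crux and verify. Both arguments rely only on the axiom $\Ker(p)=AB^+$ of Definition \ref{def : exact sequence} and neither needs flatness of $A$ over $B$.
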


\begin{proof}
    Let $i : M \to \Hom_k(A, M),\ i(m) (x) = xm$. Then $i$ is $A$-linear and injective, hence the injectivity of $M$ ensure that there is an $A$-module $N$, such that $\Hom_k(A, M) \simeq M \oplus N$ as $A$-modules. 
     Hence $\Hom_k(A, M)^B \simeq M^B \oplus N^B$ as $H$-modules and since $\ext$ preserves direct sums, it is sufficient to show that $$\ext_H^q({}_\varepsilon k,\Hom_k(A, M)^B) = 0\ {\rm for } \ q\geq 1.$$
    Moreover, we have an isomorphism of right $H$-modules $\varphi : \Hom_k(A, M)^B \to \Hom_k(H, M)$ given by $\varphi(f)(p(a)) = f(a)$ for $f \in \Hom_k(A,M)^B$ and $p(a) \in H$.

    Now, consider $\mathbf{P^\bullet}$ a resolution of ${}_\varepsilon k$ by projective $H$-modules. Then $\ext_H^\bullet({}_\varepsilon k,\Hom_k(A, M)^B)$ is the cohomology of the complex $$\Hom_H(\mathbf{P^\bullet},\Hom_k(A,M)^B) \simeq \Hom_H(\mathbf{P^\bullet},\Hom_k(H,M))\simeq \Hom_k(\mathbf{P^\bullet},M).$$ 
    The exactness of $\mathbf{P^\bullet}$ and the fact that $k$ is a field gives that this complex is exact, and hence the result. 
\end{proof}

We now get the following spectral sequence using the Grothendieck spectral sequence.

\begin{thm}\label{thm : ss ext}
Let $k \longrightarrow B \overset{i}{\longrightarrow} A \overset{p}{\longrightarrow} H \longrightarrow k$ be an exact sequence of Hopf algebras and let $M$ be a left $A$-module.
 Assume that $A$ is flat as left and right $B$-module. Then there is a spectral sequence 
\[E^{p,q}_2 = \ext_{H}^p({}_\varepsilon k,\ext_{B}^q({}_\varepsilon k,M)) \implies \ext^{p+q}_{A}({}_\varepsilon k,M)\]
which is natural in M. 
\end{thm}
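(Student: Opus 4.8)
The plan is to exhibit $\ext_A^\bullet({}_\varepsilon k,-)$ as the derived functors of a composition and invoke the Grothendieck spectral sequence. I would work with the functor $F=(-)^B:{}_A\mathcal{M}\to {}_H\mathcal{M}$ sending a left $A$-module $M$ to $M^B\simeq \ext_B^0({}_\varepsilon k,M)$, equipped with the left $H$-module structure of Lemma \ref{lemma:structure}, and the functor $G=(-)^H:{}_H\mathcal{M}\to k-\Vect$, $N\mapsto N^H\simeq \ext_H^0({}_\varepsilon k,N)$. Both are left exact, and the relevant module categories have enough injectives. The first point to verify is that $G\circ F=(-)^A=\ext_A^0({}_\varepsilon k,-)$: an element of $(M^B)^H$ is an $m\in M^B$ with $p(a)\cdot m=\varepsilon(a)m$ for all $a\in A$, and since $\varepsilon=\varepsilon_H\circ p$ and $p$ is surjective, the action formula $p(a)\cdot m=a\cdot m$ shows this set equals $\{m\in M:\ a\cdot m=\varepsilon(a)m \ \forall a\in A\}=M^A$.

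Next I would identify the derived functors feeding the machine. For $G$ this is immediate: $R^pG=\ext_H^p({}_\varepsilon k,-)$. For $F$, I would use that $A$ is flat as a right $B$-module, so that (exactly as in the proof of Proposition \ref{Prop:structure H mod}) the restriction functor $R:{}_A\mathcal{M}\to {}_B\mathcal{M}$, being right adjoint to the exact functor $A\otimes_B-$, preserves injectives. Hence an injective resolution $M\to I^\bullet$ in ${}_A\mathcal{M}$ restricts to a resolution of $M$ by injective $B$-modules, giving $R^qF(M)\simeq H^q\bigl((I^\bullet)^B\bigr)\simeq \ext_B^q({}_\varepsilon k,M)$ as vector spaces. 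Grothendieck's hypothesis, that $F$ carries injectives to $G$-acyclic objects, is precisely Lemma \ref{acyclicity}: for $I$ injective in ${}_A\mathcal{M}$ one has $R^qG(I^B)=\ext_H^q({}_\varepsilon k,I^B)=0$ for $q\geq 1$. The Grothendieck spectral sequence then yields
\[
E_2^{p,q}=R^pG\bigl(R^qF(M)\bigr)=\ext_H^p\bigl({}_\varepsilon k,\ext_B^q({}_\varepsilon k,M)\bigr)\ \Longrightarrow\ \ext_A^{p+q}({}_\varepsilon k,M),
\]
with naturality in $M$ inherited from the naturality of the Grothendieck construction.

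The step I expect to be the main obstacle is checking that the left $H$-module structure on $R^qF(M)$ produced automatically by deriving the ${}_H\mathcal{M}$-valued functor $F$ coincides with the structure on $\ext_B^q({}_\varepsilon k,M)$ defined in Proposition \ref{Prop:structure H mod}; only then does $E_2^{p,q}$ carry the asserted meaning. I would settle this through the uniqueness clause of Theorem \ref{delta functors}. For fixed $h\in H$, multiplication by $h$ is an endomorphism of $R^0F=F$, which by functoriality of derived functors extends to an endomorphism of the $\delta$-functor $R^\bullet F$; on the other hand $\rho_\bullet^h$ is, by construction, the unique endomorphism of the coeffaceable cohomological functor $\ext_B^\bullet({}_\varepsilon k,-)\circ R$ extending the same $\rho_0^h$. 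Since $R^\bullet F$ and $\ext_B^\bullet({}_\varepsilon k,-)\circ R$ agree as $\delta$-functors and both extensions restrict to multiplication by $h$ in degree $0$, uniqueness forces them to agree in every degree, so the two $H$-actions coincide and the identification of $E_2^{p,q}$ is complete.
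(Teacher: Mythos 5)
Your proposal is correct and follows essentially the same route as the paper: both factor $(-)^A$ as $(-)^H\circ(-)^B$, use flatness of $A$ over $B$ together with Lemma \ref{acyclicity} to verify the acyclicity hypothesis, and apply the Grothendieck spectral sequence. Your extra verification that the derived-functor $H$-action on $R^qF(M)$ agrees with the action of Proposition \ref{Prop:structure H mod}, via the uniqueness clause of Theorem \ref{delta functors}, is a point the paper asserts without detailed argument, and your treatment of it is sound.
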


\begin{proof}
    We proceed as in \cite{stefan_hochschild_1995}. Let $F,\ F_1, \ F_2$ be the following three functors 
    \begin{align*}
        F : {}_A\mathcal{M} \to {}_k\mathcal{M},\quad F = \Hom_A({}_\varepsilon k,-) = (-)^A,\\
        F_1 : {}_H\mathcal{M} \to {}_k\mathcal{M},\quad F_1 = \Hom_H({}_\varepsilon k,-) = (-)^H,\\
        F_2 : {}_A\mathcal{M} \to {}_H\mathcal{M},\quad F_2 = \Hom_B({}_\varepsilon k,-) = (-)^B.
    \end{align*}
For any $A$-module $M$, we have 
\begin{align*}
    F_1\circ F_2 (M) = (M^B)^H  &= \biggl(\{m\in M,\ b\cdot m = \varepsilon(b) m,\ \forall b\in B\}\biggr)^H\\ 
    &= \{m\in M,\ b\cdot m = \varepsilon(b)m,\ \forall b\in B, \ h\cdot m = \varepsilon(h)m,\ \forall h \in H\}\\
    &= \{m\in M,\ b\cdot m = \varepsilon(b)m,\ \forall b\in B, \ p(a)\cdot m = \varepsilon(p(a))m,\ \forall a \in A\}\\
    &= \{m\in M,\ a\cdot m = \varepsilon(a)m,\ \forall a\in A\}\\
    &= F(M),
\end{align*}
 hence $F_1\circ F_2 = F$. Moreover, if $M$ is an injective $A$-module, Proposition \ref{acyclicity} gives that $F_2(M)$ is $F_1$-acyclic.
    Therefore, applying the Grothendieck spectral sequence (\cite[Theorem 5.8.3]{weibel_introduction_2008}), we get the desired spectral sequence.
\end{proof}

The $H$-module structure involved in the spectral sequence in Theorem \ref{thm : ss ext} is, by construction of the Grothendieck spectral sequence, the one in Proposition \ref{Prop:structure H mod}. 
In Section \ref{sec : duality ex seq H}, we will need a more explicit description of this $H$-module structure on $\ext_B^\bullet({}_\varepsilon k,-)$, that the following result provides.

\begin{prop}\label{structure 2}
Let $k \longrightarrow B \overset{i}{\longrightarrow} A \overset{p}{\longrightarrow} H \longrightarrow k$ be an exact sequence of Hopf algebras such that $A$ is flat as a left and right $B$-module and let $M$ be a left $A$-module.
Let $P_\bullet \to {}_\varepsilon k$ be a projective resolution of ${}_\varepsilon k$ by $A$-modules. For $q\geq 0$, $a\in A, \ f \in \Hom_B(P_q,M)$ and $x\in P_q$ the expression \begin{align} (p(a) \cdot f) (x) = a_{(1)}f(S(a_{(2)})x) \tag{$*$} \end{align} 
defines an $H$-module structure on $\Hom_B(P_q,M)$ and on the complex $\Hom_B(P^\bullet, M)$, and hence induces an $H$-module structure on $\ext_B^q({}_\varepsilon k,M)$, which coincides with the $H$-module structure of Proposition \ref{Prop:structure H mod}.
\end{prop}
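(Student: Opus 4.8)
The plan is to verify the four assertions in turn: that $(*)$ yields a well-defined $B$-linear endomorphism for each element $p(a)\in H$, that these assemble into a left $H$-action, that this action is compatible with the differential of $\Hom_B(P_\bullet,M)$, and finally that the induced action on cohomology agrees with the one of Proposition \ref{Prop:structure H mod}.

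First I would check well-definedness. To see that $p(a)\cdot f$ lands in $\Hom_B(P_q,M)$, I would compute, for $b\in B$ and $x\in P_q$, the value $(p(a)\cdot f)(bx)=a_{(1)}f(S(a_{(2)})bx)$ and rewrite $S(a_{(2)})b = [S(a_{(2)})ba_{(3)}]\,S(a_{(4)})$ using $a_{(3)}S(a_{(4)})=\varepsilon(a_{(3)})$. By Lemma \ref{lem : B stable} the bracketed factor lies in $B$, so it can be pulled through $f$; the telescoping $a_{(1)}S(a_{(2)})=\varepsilon(a_{(1)})$ then collapses the expression to $b\,(p(a)\cdot f)(x)$, giving $B$-linearity. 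Independence of the chosen lift $a$ of $p(a)$ reduces, by linearity of $(*)$ in $a$, to showing $p(a)\cdot f = 0$ for $a\in\Ker p = AB^+$; writing $a=cb$ with $c\in A$ and $b\in B^+$, the factor $S(b_{(2)})\in B$ sits leftmost inside $f$, so the $B$-linearity of $f$ produces $b_{(1)}S(b_{(2)})=\varepsilon(b)=0$.

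Next I would settle the remaining algebraic points. The unit axiom is immediate, and multiplicativity $p(a)\cdot(p(a')\cdot f)=p(aa')\cdot f$ follows from $\Delta(aa')=a_{(1)}a'_{(1)}\otimes a_{(2)}a'_{(2)}$ together with the anti-multiplicativity $S(a'_{(2)})S(a_{(2)})=S(a_{(2)}a'_{(2)})$; combined with surjectivity of $p$ and the well-definedness above, this gives a genuine left $H$-module structure on each $\Hom_B(P_q,M)$. Compatibility with the differential $d(f)=f\circ\partial$ is where $A$-linearity of the resolution enters: since $\partial$ commutes with the action of $S(a_{(2)})\in A$, one gets $d(p(a)\cdot f)=p(a)\cdot d(f)$, so the differentials are $H$-linear and the cohomology $\ext_B^q({}_\varepsilon k,M)$ inherits the structure.

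The last step, and the one I expect to be the main obstacle, is to identify this structure with the abstractly defined one of Proposition \ref{Prop:structure H mod}. For $q=0$ I would use the identification $\ext_B^0({}_\varepsilon k,M)\simeq M^B$, $f\mapsto\bar f(1)$, where $f=\bar f\circ\eta$ factors through the augmentation $\eta\colon P_0\to{}_\varepsilon k$; evaluating $(*)$ at a preimage $u$ of $1$ and using $\eta(S(a_{(2)})u)=\varepsilon(a_{(2)})$ (from $\varepsilon\circ S=\varepsilon$) yields $a_{(1)}\varepsilon(a_{(2)})\,\bar f(1)=a\cdot\bar f(1)$, which is exactly the action of Lemma \ref{lemma:structure}. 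For $q\geq1$, both structures are natural and restrict to this common action in degree $0$; by the uniqueness clause of Theorem \ref{delta functors} they will coincide once I check that $(*)$ defines a morphism of $\delta$-functors, i.e. that it commutes with the connecting homomorphisms $\delta$. This compatibility is the delicate point: using the explicit description of $\delta$ coming from a lift $\widetilde\phi\colon P_q\to I$ of a cocycle along $0\to M\to I\to I/M\to 0$, I would argue by induction on $q$ that applying $(*)$ before and after $\delta$ gives the same class, the key computation being that $(f\cdot p(a))\circ\partial$ and the representative $x\mapsto a_{(1)}\,d(f)(S(a_{(2)})x)$ agree, again because $\partial$ is $A$-linear. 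The bookkeeping of this induction, rather than any conceptual difficulty, is the real work.
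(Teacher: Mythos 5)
Your proposal is correct and follows essentially the same route as the paper: the same use of Lemma \ref{lem : B stable} and the insertion $S(a_{(2)})b=\bigl(S(a_{(2)})ba_{(3)}\bigr)S(a_{(4)})$ for $B$-linearity, the same reduction of lift-independence to $\Ker p=AB^{+}$ via $b_{(1)}S(b_{(2)})=\varepsilon(b)=0$, and the same final step of checking agreement in degree $0$ on $M^{B}$ together with compatibility with the connecting homomorphisms (via $A$-linearity of $\partial$) so that the uniqueness clause of Theorem \ref{delta functors} identifies the two $H$-actions. The only cosmetic difference is that you phrase the $\delta$-functor compatibility as an induction on $q$, whereas a direct degree-by-degree verification suffices once uniqueness of the extension is invoked.
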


\begin{proof}
    First, we have to check that this defines an $H$-action. Consider $a,\ a' \in A$ such that $p(a) = p(a')$: there exist $b_1,\ldots , b_m\in B^+$ and $a_1, \ldots a_m \in A$ such that $a' - a =  \sum\limits_ia_ib_i $. For $f\in \Hom_B(P_q,M)$ and $x\in P_q$, we have $$\sum\limits_i a_{i(1)} b_{i(1)}f(S(b_{i(2)})S(a_{i(2)})x) = \sum\limits_i a_{i(1)} b_{i(1)} S(b_{i(2)})f(S(a_{i(2)})x) = \sum\limits_i a_{i(1)} \varepsilon(b_i) f(S(a_{i(2)})x) = 0 .$$
    Hence $(*)$ does not depend on the choice of $a \in A$ such that $h=p(a)$.
Now, for $f \in \Hom_B(P_q,M)$, $a\in A$,  $b\in B$ and $x \in P_q$, using Lemma \ref{lem : B stable} we have
    \begin{align*}
        \bigl(p(a)\cdot f\bigr)(bx) &= a_{(1)}f(S(a_{(2)})bx)
        = a_{(1)}f(S(a_{(2)})b \varepsilon(a_{(3)})x)\\
        &= a_{(1)}f(S(a_{(2)})b a_{(3)} S(a_{(4)})x)
        = a_{(1)}S(a_{(2)})b a_{(3)}f( S(a_{(4)})x)\\
        &= b a_{(1)}f(S(a_{(2)})x) = b \bigl(p(a)\cdot f\bigr).
    \end{align*}
    Thus $p(a) \cdot f$ is $B$-linear. We obtain that $(*)$ is well-defined and one easily checks that this defines a structure of left $H$-module on $\Hom_B(P_q,M)$, which is compatible with the differential of the complex $\Hom_B(P^\bullet, M)$, and hence induces an $H$-module structure on $\ext_B^q({}_\varepsilon k,M)$.
    
    For any $h = p(a) \in H$ let $\psi_q^h(-) : \ext_B^q({}_\varepsilon k,-)\circ R \to \ext_B^q({}_\varepsilon k,-)\circ R$ be the natural morphism given by  $\psi_q^h(M)(f) = p(a) \cdot f$, for any $A$-module $M$ and any $f\in \hom_B(P_q,M)$. 
    We will check that $\psi_\bullet^h(-) : \ext_B^q({}_\varepsilon k,-)\circ R \to \ext_B^q({}_\varepsilon k,-)\circ R$ is a morphism of $\delta$-functors and that $\psi_0^h(-) = \rho_0^h(-)$, hence the unicity statement in Theorem \ref{delta functors} will conclude the proof.

    We have $\ext_B^0({}_\varepsilon k,M) \simeq M^B$ and this isomorphism is given by $[f] \mapsto \widetilde{f}(1)$ for any $[f] \in \ext_B^0({}_\varepsilon k,M)$ where $\widetilde{f} \in \Hom_B({}_\varepsilon k,M)$ is the map induced by $f$. It is clear that this isomorphism sends $p(a) \cdot f = \bigl( x \mapsto a_{(1)} f(S(a_{(2)})x) \bigr)$ to $a \widetilde{f}(1) $ for any $a\in A$ and $[f]\in \ext_B^0({}_\varepsilon k,M)$ hence, we have $\psi_0^h(-) = \rho_0^h(-)$ for any $h = p(a) \in H$.

    In order to prove that $\psi_\bullet^h$ is a morphism of $\delta$-functors, we first explain de definition of the connecting homomorphism $\delta$ in our case of interest. Let $P_\bullet \to {}_\varepsilon k$ be a projective resolution of the left $A$-module ${}_\varepsilon k$ with $\partial_q : P_q \to P_{q-1}$ the morphisms of this resolution, and $d_q = -\circ \partial_q$. Let $0\to M' \to M \overset{\pi}{\to} M'' \to 0$ be an exact sequence of $A$-modules. For $q\geq 0$ and $[\phi] \in \ext_B^q({}_\varepsilon k,M'')$, let $\widetilde{\phi} : P_q \to M$ such that $\phi = \pi\circ \widetilde{\phi}$. Since $\phi$ is a cocycle, we then have $d_{q+1}(\widetilde{\phi})(P_{q+1})\subset M'$, and we set 
    $\delta([\phi]) = [d(\widetilde{\phi})]$. 

    For $h = p(a) \in H,\ q\geq 0, \ \alpha \in \ext_B^q({}_\varepsilon k, M''), \ \phi : P_q \to M''$ a representative of $\alpha$, $\widetilde{\phi} : P_q \to M$ such that $\phi = \pi \circ \widetilde{\phi}$ and $x\in P_{q+1}$, we have    
\begin{align*}
    p(a) \cdot d_{q+1}(\widetilde{\phi}) (x) &= p(a) \cdot \bigl(\widetilde{\phi} \circ \partial_{q+1}\bigr)(x) \\
    &= a_{(1)} \bigl(\widetilde{\phi} \circ \partial_{q+1}\bigr)(S(a_{(2)})x)\\
    &= a_{(1)} \widetilde{\phi}(\partial_{q+1}(S(a_{(2)})x))\\
    &= a_{(1)} \widetilde{\phi}(S(a_{(2)})\partial_{q+1}(x)) \quad {\rm since } \ \partial_{q+1} \ {\rm is} \ A{\rm -linear}\\
    &= \bigl(p(a)\cdot \widetilde{\phi}\bigr)(\partial_{q+1}(x))\\
    &=d_{q+1}\bigl(p(a) \cdot \widetilde{\phi}\bigr)(x).
\end{align*}
    We have, since $\pi$ is $A$-linear, $\pi(p(a) \cdot \widetilde{\phi}) = p(a) \cdot \phi$, hence $d_{q+1}\bigl(p(a) \cdot \widetilde{\phi}\bigr)$ is a representative of $\delta\bigl(p(a) \cdot [\phi]\bigr) = \delta(p(a) \cdot \alpha)$. One the other hand $p(a) \cdot d_{q+1}(\widetilde{\phi})$ is a representative of $p(a)\cdot [d_{q+1}(\widetilde{\phi})] = p(a) \cdot \delta([\phi]) = p(a) \cdot \delta(\alpha)$.
    
    Hence we have $\delta\bigl(\psi_q^h(M'')(\alpha)\bigr)= \psi_{q+1}^h(M')(\delta(\alpha))$, and
 it follows that for any $h\in H$,  $\psi_\bullet^h(-)$ and $\rho_\bullet^h(-)$ are morphisms of $\delta$-functors which both extend $\rho_0^h(-)$, and hence by unicity of such an extension, we have $\psi_\bullet^h(-) = \rho_\bullet^h(-)$, which ends the proof.
\end{proof}

We now turn to the construction of the spectral sequence in homology. Recall that for $M$ a left $A$-module, we have $\tor_0^B(k_\varepsilon,M) \simeq M/[M,B] $ where $[M,B] = {\rm Span}\{bm - \varepsilon(b) m \ | \ b\in B, m\in M\} = B^+M$. We define a structure of left $H$-module on $\tor_0^B(k_\varepsilon,M)$.

\begin{prop}
    Let $k \longrightarrow B \overset{i}{\longrightarrow} A \overset{p}{\longrightarrow} H \longrightarrow k$ be an exact sequence of Hopf algebras. For $M$ a left $A$-module, there is a natural left $H$-module structure on $\tor^B_0(k_\varepsilon,M)$ defined by $$p(a)\cdot \pi_M(m) = \pi_M(a m)$$ for any $a\in A$ and $m\in M$ where $\pi_M$ is the canonical projection of $M$ on $M/B^+M$.
\end{prop}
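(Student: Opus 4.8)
The plan is to recognize $\tor_0^B(k_\varepsilon,M)$ as the quotient module $M/B^+M$ (using that $[M,B]=B^+M$, since $bm-\varepsilon(b)m=(b-\varepsilon(b))m$ and $b-\varepsilon(b)$ ranges over $B^+$ as $b$ ranges over $B$), and then to obtain the $H$-action in two stages: first endow $M/B^+M$ with a left $A$-module structure, and then observe that this $A$-action factors through $p:A\to H$, since $H\cong A/\Ker(p)$ as algebras.

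First I would check that $B^+M$ is a left $A$-submodule of $M$. The only nontrivial point is stability under the $A$-action: for $a\in A$ and $b^+\in B^+$, we have $ab^+\in AB^+=B^+A$ by axiom (2) of Definition \ref{def : exact sequence}, so we may write $ab^+=\sum_j c_j^+ a_j$ with $c_j^+\in B^+$ and $a_j\in A$; hence $a(b^+m)=\sum_j c_j^+(a_j m)\in B^+M$ for every $m\in M$. Consequently $B^+M$ is $A$-stable, and the quotient $M/B^+M$ inherits a left $A$-module structure given by $a\cdot\pi_M(m)=\pi_M(am)$.

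It then remains to see that $\Ker(p)$ acts trivially on $M/B^+M$, so that the $A$-action descends to the quotient algebra $H\cong A/\Ker(p)$. This is immediate: by axiom (2), $\Ker(p)=B^+A$, hence $\Ker(p)\cdot M=B^+(AM)\subseteq B^+M$, i.e. $\pi_M(am)=0$ whenever $p(a)=0$. Therefore the formula $p(a)\cdot\pi_M(m)=\pi_M(am)$ is well defined and, since $p$ is a surjective algebra map (so $p(aa')=p(a)p(a')$ and $p(1)=1$), defines a left $H$-module structure on $M/B^+M\simeq\tor_0^B(k_\varepsilon,M)$. Naturality in $M$ is clear: any $A$-linear map $\phi:M\to N$ sends $B^+M$ into $B^+N$, hence induces a map on the quotients that is visibly $H$-linear for the structures just defined.

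I expect no serious obstacle here; the entire statement reduces to the $A$-stability of $B^+M$, which is exactly where axiom (2) of an exact sequence of Hopf algebras (the equalities $AB^+=B^+A=\Ker(p)$) enters. This single fact simultaneously yields both the well-definedness with respect to the choice of lift of $p(a)$ and the descent of the $A$-action to $H$.
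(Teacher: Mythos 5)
Your proof is correct and follows essentially the same route as the paper: both arguments rest entirely on the identity $AB^{+}=B^{+}A=\Ker(p)$ from axiom (2), the paper verifying directly that the formula is independent of the lift of $p(a)$ and kills $B^{+}M$, while you package the same two computations as ``the $A$-action descends to $M/B^{+}M$ and then factors through $A/\Ker(p)\simeq H$''. Your organization is, if anything, slightly cleaner, and your explicit check of naturality is a small bonus the paper omits.
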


\begin{proof}
Let  $a,a' \in A$ be such that $p(a) = p(a')$: there exist $b_1,\ldots, b_m \in B^+$ and $a_1,\ldots, a_m \in A$ such that $a-a' =\sum\limits_ia_ib_i $. 
    For $m\in M$, we have $\pi_M((a-a')m) =
    \pi_M(\sum_ia_ib_i m)= \sum\limits_i \varepsilon(a_ib_i) m=0$.
   Hence for $h= p(a) \in H$, the application $\psi_h : M \to \tor^B_0(k_\varepsilon,M), \ m \mapsto \pi_M(am)$, does not depend on the choice of $a$.
    For $h = p(a) \in H, \ b\in B$ and $m \in M$, we have $$\pi_M(a(bm - \varepsilon(b) m)) = \pi_M(a(b-\varepsilon(b))m) = 0.$$ 
    Hence the application $\psi_h$ satisfies $[M,B] \subset \ker(\psi_h)$ and induces the expected linear map $\phi_h : \tor^B_0(k_\varepsilon,M) \to \tor^B_0(k_\varepsilon,M)$, which defines the announced left $H$-module structure on $\tor^B_0(k_\varepsilon,M)$.
\end{proof}

As in the case of cohomology spaces, if $R : {}_A\mathcal{M} \to {}_B\mathcal{M}$ denotes the functor of restriction of scalars, then the functor $\tor^\bullet_B(k_\varepsilon ,-)\circ R$ is homological and effaceable, hence the above structure of $H$-module extends to the homology spaces of higher degree.

\begin{prop}
    Let $k \longrightarrow B \overset{i}{\longrightarrow} A \overset{p}{\longrightarrow} H \longrightarrow k$ be an exact sequence of Hopf algebras and $M$ a left $A$-module. The previous action extends to an action on $\tor_n^B(k_\varepsilon,M)$ for $n\geq 0$.
\end{prop}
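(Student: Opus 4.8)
The plan is to mirror the proof of Proposition~\ref{Prop:structure H mod}, replacing the cohomological coeffaceable functor $\ext_B^\bullet({}_\varepsilon k,-)\circ R$ by the homological functor $T_\bullet := \tor_\bullet^B(k_\varepsilon,-)\circ R \colon {}_A\mathcal{M}\to k-\Vect$, where $R$ is restriction of scalars, and replacing Theorem~\ref{delta functors} by its homological dual (universality of effaceable homological functors, obtained by the usual dualization of \cite[Theorem 7.5]{brown_cohomology_1982}; see also \cite{weibel_introduction_2008}). First I would check that $T_\bullet$ is effaceable: given a left $A$-module $M$, choose a surjection $P\twoheadrightarrow M$ with $P$ a projective $A$-module. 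Using flatness of $A$ as a left $B$-module, $R(P)$ is a direct summand of a direct sum of copies of $R(A)=A$ and is therefore flat as a left $B$-module, so $T_n(P)=\tor_n^B(k_\varepsilon,R(P))=0$ for $n\geq 1$. Together with the long exact $\tor$-sequences, this exhibits $T_\bullet$ as a homological effaceable functor whose degree-zero part $T_0$ is the functor $M\mapsto M/B^+M$ carrying the $H$-module structure defined in the previous proposition.

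Next, for each $h\in H$ the degree-zero action furnishes a natural transformation $\phi_0^h\colon T_0\to T_0$ of functors ${}_A\mathcal{M}\to k-\Vect$, given by $\phi_0^h(M)(\pi_M(m))=\pi_M(am)$ for any $a\in A$ with $p(a)=h$; well-definedness (independence of $a$ and of the representative $m$) was established in the previous proposition, and naturality in $M$ is immediate from the $A$-linearity of module maps. By the homological universality theorem, $\phi_0^h$ then extends uniquely to a morphism of homological $\delta$-functors $\phi_\bullet^h\colon T_\bullet\to T_\bullet$.

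Finally I would deduce the module axioms from uniqueness of the extension, exactly as in Proposition~\ref{Prop:structure H mod}. Since the identities $\phi_0^{hh'}=\phi_0^h\circ\phi_0^{h'}$ and $\phi_0^1=\id$ hold in degree zero, and $h\mapsto\phi_0^h$ is linear, uniqueness of the extending morphism of $\delta$-functors forces $\phi_\bullet^{hh'}=\phi_\bullet^h\circ\phi_\bullet^{h'}$, $\phi_\bullet^1=\id$, and linearity in $h$ in all degrees. Thus setting $h\cdot\xi:=\phi_n^h(M)(\xi)$ defines a left $H$-module structure on each $\tor_n^B(k_\varepsilon,M)$ extending the degree-zero one, and naturality in $M$ of the whole construction follows from that of the $\phi_\bullet^h$.

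The main obstacle is the homological universality statement itself: Theorem~\ref{delta functors} is phrased for cohomological coeffaceable functors, so one must first record its dual for effaceable homological functors before the formal argument applies. A secondary point is the effaceability verification, which uses flatness of $A$ as a left $B$-module --- the homological counterpart of the right-flatness used to restrict injectives in the cohomological case --- this being the only place where a flatness hypothesis enters.
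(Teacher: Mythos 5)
Your proposal is correct and follows essentially the same route as the paper, which only sketches this step in one sentence (the functor $\tor_\bullet^B(k_\varepsilon,-)\circ R$ is homological and effaceable, so the degree-zero action extends by the dual of Theorem \ref{delta functors}). Your fleshing out of the effaceability via restriction of projectives and of the module axioms via uniqueness matches the intended argument, including the (implicit in the paper) use of flatness of $A$ over $B$.
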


\begin{lemma}\label{lemma:iso tor0}
    Let $k \longrightarrow B \overset{i}{\longrightarrow} A \overset{p}{\longrightarrow} H \longrightarrow k$ be an exact sequence of Hopf algebras. For any vector space $V$, we have $\tor_0^B(k_\varepsilon,A\otimes V) \simeq H\otimes V$ as left $H$-modules.
\end{lemma}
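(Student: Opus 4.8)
The plan is to compute the left-hand side explicitly as a quotient and then to invoke the defining axiom of an exact sequence. Recall from the discussion preceding the statement that for any left $A$-module $M$ one has a natural isomorphism $\tor_0^B(k_\varepsilon,M)\simeq M/B^+M$, with $\pi_M\colon M\to M/B^+M$ the canonical projection. I would apply this to $M=A\otimes V$, regarded as a left $A$-module via left multiplication on the first factor (and as a left $B$-module by restriction along $i$). Since $B$ acts only on the $A$-factor, one has $b\cdot(a\otimes v)-\varepsilon(b)(a\otimes v)=\bigl((b-\varepsilon(b))a\bigr)\otimes v$, and as $b$ ranges over $B$ the elements $b-\varepsilon(b)$ range over all of $B^+$; hence $B^+(A\otimes V)=(B^+A)\otimes V$. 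As $-\otimes V$ is exact over the field $k$, this gives
\[\tor_0^B(k_\varepsilon,A\otimes V)\simeq (A\otimes V)/\bigl((B^+A)\otimes V\bigr)\simeq (A/B^+A)\otimes V.\]

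Next I would use axiom (2) of Definition \ref{def : exact sequence}, namely $\Ker(p)=B^+A$, together with the surjectivity of $p$ from axiom (1): these say precisely that $p$ induces a linear (indeed Hopf-algebra) isomorphism $\overline{p}\colon A/B^+A\xrightarrow{\sim} H$. Tensoring with $V$ yields a linear isomorphism
\[\Phi\colon \tor_0^B(k_\varepsilon,A\otimes V)\xrightarrow{\sim} H\otimes V,\qquad \pi_{A\otimes V}(a\otimes v)\longmapsto p(a)\otimes v.\]

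It then remains to check that $\Phi$ is $H$-linear for the $H$-module structures in play: the structure on the source defined above, $p(a')\cdot\pi_{A\otimes V}(a\otimes v)=\pi_{A\otimes V}(a'a\otimes v)$, and the standard structure on $H\otimes V$ given by left multiplication on the first factor. This is immediate, since $\Phi$ sends $\pi_{A\otimes V}(a'a\otimes v)$ to $p(a'a)\otimes v=p(a')p(a)\otimes v=p(a')\cdot\bigl(p(a)\otimes v\bigr)$. The only nonformal ingredient is the identification $A/B^+A\simeq H$, which is exactly the content of the exactness axioms; everything else is routine bookkeeping of the module actions, so I expect no real obstacle.
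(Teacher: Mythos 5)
Your proof is correct and follows essentially the same route as the paper's: both identify $\tor_0^B(k_\varepsilon,A\otimes V)$ with $(A\otimes V)/B^+(A\otimes V)$, use $\Ker(p)=B^+A$ to see that $p\otimes\id_V$ induces the isomorphism $\overline{a\otimes v}\mapsto p(a)\otimes v$, and check $H$-linearity directly. Your version merely spells out the computation of $B^+(A\otimes V)=(B^+A)\otimes V$ that the paper leaves as "a direct and easy computation".
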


\begin{proof}
    We have $\tor_0^B(k_\varepsilon,A\otimes V) \simeq A\otimes V /[A\otimes V,B]$. Consider the map 
    \begin{align*}
    \varphi : A\otimes V \ & \longrightarrow H\otimes V \\
a\otimes v & \longmapsto p(a)\otimes v
\end{align*}
which is clearly surjective since $p$ is. A direct and easy computation shows that $\ker(\varphi) = [A\otimes V,B]$, hence it induces the isomorphism
    \begin{align*}
     A\otimes V /[A\otimes V,B]\ & \longrightarrow H\otimes V \\
\overline{a\otimes v} & \longmapsto p(a)\otimes v
\end{align*}    
which is clearly $H$-linear.
\end{proof}

\begin{prop}\label{prop : acyc homo}
    Let $k \longrightarrow B \overset{i}{\longrightarrow} A \overset{p}{\longrightarrow} H \longrightarrow k$ be an exact sequence of Hopf algebras. Let $M$ be a projective left $A$-module, then $$\tor_n^H(k_\varepsilon, \tor_0^B(k_\varepsilon,M)) =0, \quad {\rm for} \ n\geq1. $$
\end{prop}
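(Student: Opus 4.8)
The plan is to reduce the statement to the case of a free $A$-module, where Lemma \ref{lemma:iso tor0} applies directly, in exact analogy with the cohomological acyclicity result of Lemma \ref{acyclicity}. Since $M$ is projective, it is a direct summand of a free $A$-module; writing such a free module as $A \otimes V$ for a suitable vector space $V$ (namely $V = k^{(I)}$ if $M$ is a summand of $A^{(I)}$), there is an $A$-module $N$ together with an $A$-linear isomorphism $A \otimes V \simeq M \oplus N$.

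First I would record that the functor $\tor_0^B(k_\varepsilon, -) : {}_A\mathcal{M} \to {}_H\mathcal{M}$ is additive, so applying it to the decomposition $A \otimes V \simeq M \oplus N$ yields an isomorphism of left $H$-modules
\[
\tor_0^B(k_\varepsilon, A \otimes V) \simeq \tor_0^B(k_\varepsilon, M) \oplus \tor_0^B(k_\varepsilon, N).
\]
By Lemma \ref{lemma:iso tor0}, the left-hand side is isomorphic to $H \otimes V$ as a left $H$-module. Since $H \otimes V$ is a free, hence flat, left $H$-module, we have $\tor_n^H(k_\varepsilon, H \otimes V) = 0$ for $n \geq 1$.

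Finally, using additivity of $\tor_n^H(k_\varepsilon, -)$ once more, this vanishing group equals
\[
\tor_n^H(k_\varepsilon, \tor_0^B(k_\varepsilon, M)) \oplus \tor_n^H(k_\varepsilon, \tor_0^B(k_\varepsilon, N)),
\]
and since a direct summand of the zero group vanishes, we conclude $\tor_n^H(k_\varepsilon, \tor_0^B(k_\varepsilon, M)) = 0$ for $n \geq 1$, as claimed.

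The argument is essentially formal once Lemma \ref{lemma:iso tor0} is available; the only point requiring slight care is that the various direct-sum decompositions are compatible with the $H$-module structures, which follows from the additivity and naturality of the functors $\tor_0^B(k_\varepsilon, -)$ and $\tor_n^H(k_\varepsilon, -)$. I do not expect a genuine obstacle here, so the main work has really been carried out already in establishing Lemma \ref{lemma:iso tor0}.
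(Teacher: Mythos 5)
Your proposal is correct and follows essentially the same route as the paper: reduce to a free module $A\otimes V$ via a direct-sum decomposition, invoke Lemma \ref{lemma:iso tor0} to identify $\tor_0^B(k_\varepsilon, A\otimes V)$ with $H\otimes V$, and conclude by freeness of $H\otimes V$ over $H$ (the paper phrases this last step by computing the homology of $\mathbf{P}^\bullet\otimes_H(H\otimes V)\simeq \mathbf{P}^\bullet\otimes V$ directly, which is the same observation as your flatness argument).
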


\begin{proof}
    The $A$-module $M$ being projective, there exists a module $N$ and a vector space $V$ such that $A \otimes V \simeq M \oplus N$ as $A$-modules. Since $\tor$ preserves direct sums, it is sufficient to prove that $$\tor_n^H(k_\varepsilon, \tor_0^B(k_\varepsilon,A \otimes V)) =0, \quad {\rm for } \ n\geq1. $$

    Now, let $\mathbf{P^\bullet}$ be a projective resolution of the $H$-module $k_\varepsilon$, then $\tor_\bullet^H(k_\varepsilon, \tor_0^B(k_\varepsilon,A \otimes V))$ is the homology of the complex $$\mathbf{P^\bullet} \otimes_H \tor_0^B(k_\varepsilon,A \otimes V) \simeq \mathbf{P^\bullet} \otimes_H (H \otimes V) \simeq \mathbf{P^\bullet} \otimes V$$
    where the first isomorphism is Lemma \ref{lemma:iso tor0}. Finally using the exactness of $\mathbf{P^\bullet}$, we get that this complex is exact since $k$ is a field. Hence $$\Homo_n(\mathbf{P^\bullet} \otimes_H \tor_0^B(k_\varepsilon,A \otimes V)) \simeq \Homo_n(P^\bullet \otimes V) = 0$$ for $n\geq 1$
\end{proof}

\begin{thm}\label{thm : ss tor}
    Let $k \longrightarrow B \overset{i}{\longrightarrow} A \overset{p}{\longrightarrow} H \longrightarrow k$ be an exact sequence of Hopf algebras such that $A$ is projective as a left and right $B$-module and let $M$ be a left $A$-module.
    There is a spectral sequence 
\[E_{p,q}^2 = \tor^{H}_p(k_\varepsilon,\tor^{B}_q(k_{\varepsilon},M)) \implies \tor_{p+q}^{A}(k_{\varepsilon},M)\]
which is natural in M.  
\end{thm}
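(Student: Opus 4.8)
The plan is to mirror the cohomological construction of Theorem~\ref{thm : ss ext}, but now in the homology setting, using the \emph{dual} of the Grothendieck spectral sequence (the composite functor spectral sequence for the composition of a right-exact functor followed by another right-exact functor, applied to left-derived functors). The three functors I would use are the analogues of $F$, $F_1$, $F_2$: namely $G = k_\varepsilon \otimes_A - : {}_A\mathcal{M} \to {}_k\mathcal{M}$, $G_2 = k_\varepsilon \otimes_B R(-) = \tor_0^B(k_\varepsilon,-) : {}_A\mathcal{M} \to {}_H\mathcal{M}$ (where $R$ is restriction of scalars and the $H$-module structure is the one just constructed), and $G_1 = k_\varepsilon \otimes_H - : {}_H\mathcal{M} \to {}_k\mathcal{M}$.

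First I would verify the factorization $G_1 \circ G_2 = G$. Just as in the proof of Theorem~\ref{thm : ss ext}, for a left $A$-module $M$ one computes $G_1(G_2(M)) = k_\varepsilon \otimes_H (M/B^+M)$. Since $M/B^+M = M/[M,B]$ carries the $H$-action $p(a)\cdot \pi_M(m) = \pi_M(am)$, quotienting further by $H^+ = p(A)^+$ exactly kills the span of $am - \varepsilon(a)m$ for all $a\in A$ (using surjectivity of $p$ and $\Ker(p)=AB^+$), so the iterated quotient is $M/A^+M = k_\varepsilon \otimes_A M = G(M)$, giving $G_1\circ G_2 = G$. Next I would check the acyclicity hypothesis needed for the composite spectral sequence: if $P$ is a projective (hence $G_2$-adapted) left $A$-module, then $G_2(P)$ must be $G_1$-acyclic, i.e. $\tor_n^H(k_\varepsilon, \tor_0^B(k_\varepsilon,P)) = 0$ for $n\geq 1$. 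This is precisely Proposition~\ref{prop : acyc homo}. Finally I would invoke the homological Grothendieck spectral sequence (the dual of \cite[Theorem 5.8.3]{weibel_introduction_2008}), which, under the factorization $G = G_1\circ G_2$ and the acyclicity of $G_2$ on projectives, produces
\[
E_{p,q}^2 = L_pG_1 \circ L_qG_2(M) = \tor_p^H(k_\varepsilon, \tor_q^B(k_\varepsilon,M)) \implies L_{p+q}G(M) = \tor_{p+q}^A(k_\varepsilon,M),
\]
natural in $M$, which is the claimed spectral sequence.

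I expect the main obstacle to be the verification that $G_2 = \tor_0^B(k_\varepsilon,-)$ sends projective $A$-modules to $G_1$-acyclic $H$-modules, together with the subtlety that $G_2$ is genuinely $H$-valued, so one must confirm that the homological Grothendieck machine applies with the correct (already-constructed) $H$-module structure on $\tor_q^B(k_\varepsilon,M)$ rather than a mere vector-space structure. The acyclicity itself is handled by Proposition~\ref{prop : acyc homo}, whose proof reduces (via a projective $A$-module being a direct summand of a free module $A\otimes V$ and Lemma~\ref{lemma:iso tor0}) to the exactness of $\mathbf{P^\bullet}\otimes V$ over a field; so the real work is to ensure the identification $\tor_0^B(k_\varepsilon, A\otimes V) \simeq H\otimes V$ is $H$-linear, which is exactly the content of Lemma~\ref{lemma:iso tor0}. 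Once these two ingredients are in place, the derivation is formal.
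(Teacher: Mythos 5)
Your proposal is correct and follows essentially the same route as the paper: the same factorization $k_\varepsilon\otimes_A - = (k_\varepsilon\otimes_H -)\circ(k_\varepsilon\otimes_B -)$ verified via the quotient identification $(M/B^+M)/H^+(M/B^+M)\simeq M/A^+M$, the same acyclicity input from Proposition \ref{prop : acyc homo} (resting on Lemma \ref{lemma:iso tor0}), and the homological Grothendieck spectral sequence, which the paper cites as \cite[Theorem 5.8.4]{weibel_introduction_2008}.
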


\begin{proof}
    We reason as in \cite{stefan_hochschild_1995}: let $F,\ F_1, \ F_2$ be the following three functors 
    \begin{align*}
        F : {}_A\mathcal{M} \to {}_k\mathcal{M},\quad F = k_\varepsilon \otimes_A - = -/[-,A],\\
        F_1 : {}_H\mathcal{M} \to {}_k\mathcal{M},\quad F_1 = k_\varepsilon \otimes_H - = -/[-,H],\\
        F_2 : {}_A\mathcal{M} \to {}_H\mathcal{M},\quad F_2 = k_\varepsilon \otimes_B - = -/[-,B].
    \end{align*}
For any $A$-module $M$, we have 
\[
    F_1\circ F_2 (M) = \tor_0^H(k_\varepsilon, M/B^+M) = \bigl(M/B^+M\bigr)/H^+\bigl(M/B^+M\bigr)
\]
Consider the canonic projection $\theta : M/B^+M \to M/A^+M$, we denote $\pi^{(B)}$ (resp. $\pi^{(A)}$) the projection of $M$ onto $M/B^+M$ (resp. $M/A^+M$). For $h=p(a)\in H$ and $m\in M$, we have 

$$\theta\biggl((h-\varepsilon(h))\cdot \pi^{(B)}(m)\biggr) = \theta\biggl(\pi^{(B)}\bigl((a-\varepsilon(a))m\bigr)\biggr) = \pi^{(A)}\bigl((a-\varepsilon(a))m\bigr) = 0.$$ Hence $H^+ \bigl(M/B^+M\bigr) \subset \ker(\theta)$. On the converse, if $\theta\bigl(\pi^{(B)}(m)\bigr) = \pi^{(A)}(m) = 0$, there exist $a_1,\dots, \ a_n \in A^+$ and $m_1,\dots, \ m_n$ such that $m = \sum\limits_i a_i m_i$. We thus have $$\pi^{(A)}(m) = \theta\bigl(\pi^{(B)}(\sum\limits_i a_i m_i)\bigr) = \theta\bigl(\sum\limits_i p(a_i)\cdot \pi^{(B)}(m_i)\bigr)$$ 
with $\sum\limits_i p(a_i)\cdot \pi^{(B)}(m_i) \in H^+ \bigl(M/B^+M\bigr)$, hence we have $\ker(\theta) \bigl(M/B^+M\bigr)$. We get an isomorphism $\widetilde{\theta} : \bigl(M/B^+M\bigr)/H^+\bigl(M/B^+M\bigr) \to M/A^+M$ and finally $F = F_1\circ F_2(M)$.

Moreover, if $M$ is a projective $A$-module, Proposition \ref{prop : acyc homo} gives that $F_2(M)$ is $F_1$-acyclic.
    Hence, applying the Grothendieck spectral sequence in homology (\cite[Theorem 5.8.4]{weibel_introduction_2008}), we get the desired spectral sequence.

\end{proof}

\begin{rem}\label{ff=proj}
It is shown in \cite[Corollary 1.8]{schneider92} that if $A$ is Hopf algebra with bijective antipode and $B\subset A$ is a Hopf subalgebra, then $A$ is faithfully flat as a (left or right) $B$-module if an only if it is projective as a (left or right) $B$-module. Hence, in order to invoke one of the above spectral sequences,  we can indifferently assume projectivity or faithful flatness.
\end{rem}

\section{Homological smoothness and exact sequences of Hopf algebras}\label{Sec:Smoothness}

In this section, using the spectral sequences defined in Section \ref{Sec:Spec Seq}, we prove that if $k \longrightarrow B \overset{i}{\longrightarrow} A \overset{p}{\longrightarrow} H \longrightarrow k$ is an exact sequence of Hopf algebras with bijective antipodes such that $A$ is faithfully flat as a left and right $B$-module, then homological smoothness of $B$ and $H$ implies the smoothness of $A$. This is the first step towards the proof of Theorem \ref{th seq}. We begin by a known result on the finiteness of the cohomological dimension \cite[Proposition 3.2]{Bic16}.

\begin{prop}\label{prop:finitecd}
Let $k \longrightarrow B \overset{i}{\longrightarrow} A \overset{p}{\longrightarrow} H \longrightarrow k$ be an exact sequence of Hopf algebras with bijective antipodes such that $A$ is flat as left and right $B$-module. We have $\cd(A)\leq \cd(B)+\cd(H)$, and hence if $\cd(B)$ and $\cd(H)$ are finite, so is $\cd(A)$.
\end{prop}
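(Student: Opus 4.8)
The plan is to use the cohomological spectral sequence of Theorem~\ref{thm : ss ext} to bound $\cd(A) = \pdim_{A^e}(A)$, translated via Theorem~\ref{ext hopf} into the language of the trivial module. Recall that for a Hopf algebra $A$ we have $\cd(A) = \pdim_A({}_\varepsilon k)$, since $\pdim_{A^e}(A) = \pdim_A({}_\varepsilon k)$ by Theorem~\ref{ext hopf} (the Hochschild cohomology of $A$ with arbitrary bimodule coefficients is computed by $\ext_A^\bullet({}_\varepsilon k, -)$, and projective dimension is detected by vanishing of $\ext$ in high degrees). Thus it suffices to show that $\ext_A^n({}_\varepsilon k, M) = 0$ for all $A$-modules $M$ whenever $n > \cd(B) + \cd(H)$.

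First I would note that by definition $\cd(B) = \pdim_B({}_\varepsilon k)$ and $\cd(H) = \pdim_H({}_\varepsilon k)$, so $\ext_B^q({}_\varepsilon k, N) = 0$ for $q > \cd(B)$ and any $B$-module $N$, and likewise $\ext_H^p({}_\varepsilon k, L) = 0$ for $p > \cd(H)$ and any $H$-module $L$. Now, since $A$ is flat as a left and right $B$-module (faithful flatness is equivalent to projectivity here by Remark~\ref{ff=proj}, but flatness suffices to invoke the spectral sequence), Theorem~\ref{thm : ss ext} gives, for any left $A$-module $M$, a convergent spectral sequence
\[
E_2^{p,q} = \ext_H^p\bigl({}_\varepsilon k, \ext_B^q({}_\varepsilon k, M)\bigr) \implies \ext_A^{p+q}({}_\varepsilon k, M).
\]
The key point is that the $E_2$-page vanishes outside the rectangle $0 \leq p \leq \cd(H)$, $0 \leq q \leq \cd(B)$: the $q$-direction vanishing comes from $\pdim_B({}_\varepsilon k) = \cd(B)$ applied to the $B$-module underlying $M$, and the $p$-direction vanishing comes from $\pdim_H({}_\varepsilon k) = \cd(H)$ applied to the $H$-module $\ext_B^q({}_\varepsilon k, M)$ (whose $H$-structure is provided by Proposition~\ref{Prop:structure H mod}).

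Consequently $E_2^{p,q} = 0$ whenever $p + q > \cd(B) + \cd(H)$, and since each $E_\infty^{p,q}$ is a subquotient of $E_2^{p,q}$, the abutment $\ext_A^n({}_\varepsilon k, M)$ admits a filtration whose associated graded pieces all vanish for $n > \cd(B) + \cd(H)$; hence $\ext_A^n({}_\varepsilon k, M) = 0$ in that range. This holds for every $A$-module $M$, so $\pdim_A({}_\varepsilon k) \leq \cd(B) + \cd(H)$, giving $\cd(A) \leq \cd(B) + \cd(H)$, and finiteness of $\cd(A)$ when both summands are finite. I do not anticipate a genuine obstacle here: the argument is the standard ``rectangle of vanishing'' estimate for a first-quadrant spectral sequence, and all the inputs—the spectral sequence itself, the identification of Hopf cohomological dimension with the projective dimension of the trivial module, and the interpretation of $\cd$ via $\ext_\bullet({}_\varepsilon k, -)$—are already available in the excerpt. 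The only mild subtlety is being careful that the $H$-module structure on $\ext_B^q({}_\varepsilon k, M)$ is the intrinsic one so that $\ext_H^p$ of it is defined and vanishes in the expected range, but this is exactly what Proposition~\ref{Prop:structure H mod} guarantees.
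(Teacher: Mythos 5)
Your proposal is correct and follows essentially the same route as the paper: the same spectral sequence from Theorem~\ref{thm : ss ext}, the same rectangle-of-vanishing estimate on the $E_2$-page, and the same conclusion via vanishing of $\ext_A^n({}_\varepsilon k, -)$ for $n > \cd(B)+\cd(H)$. The only cosmetic difference is that you invoke the general subquotient argument for the abutment where the paper explicitly tracks the vanishing of the differentials, and you spell out the identification $\cd(A)=\pdim_A({}_\varepsilon k)$ which the paper uses implicitly.
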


\begin{proof}
Under those assumptions, we can use Theorem \ref{thm : ss ext}. Thus, for every left $A$-module $M$, we get a spectral sequence $$E^{p,q}_2 = \ext_{H}^p({}_\varepsilon k,\ext_{B}^q({}_\varepsilon k,M)) \implies \ext^{p+q}_{A}({}_\varepsilon k,M).$$
If $\cd(B)$ or $\cd(H)$ is infinite, there is nothing to show, so we assume that these are finite.
 For $p > \cd(H)$ or $q> \cd(B)$, we have $E^{pq}_2 = \{0\} $. Now, if we denote $d_2^{p,q} : E^{pq}_2 \to E^{p+2,q-1}_2$ the differential on the second page of the spectral sequence, we get that for $p> \cd(H) $ or $q>\cd(B)$ the maps $d_2^{p,q} : E^{p,q}_2 \to E^{p+2,q-1}_2$ and $d_2^{p-2,q+1} : E^{p-2,q+1}_2 \to E_2^{p,q}$ are both $0$ hence $\{0\} = E_2^{p,q} \simeq E_3^{p,q} \simeq ..\simeq \ext^{p+q}_{A}({}_\varepsilon k,M)$. We obtain that $\ext^{n}_{A}({}_\varepsilon k,M)=\{0\}$
 for $n>\cd(B)+\cd(H)$, and hence $\cd(A) \leq \cd(H)+\cd(B)$.    
\end{proof}

\begin{rem}
    We will prove in the following section that if $B$ is homologically smooth and $H$ has finite cohomological dimension, then we have $\cd(A) = \cd(B) + \cd(H)$. This generalizes \cite[Theorem 5.5]{bieri_homological_1983}.
\end{rem}

\begin{thm}\label{thm : smoothness}
Let $k \longrightarrow B \overset{i}{\longrightarrow} A \overset{p}{\longrightarrow} H \longrightarrow k$ be an exact sequence of Hopf algebras with bijective antipodes such that $A$ is faithfully flat as left and right $B$-module. If $H$ and $B$ are homologically smooth algebras, then $A$ is homologically smooth.
\end{thm}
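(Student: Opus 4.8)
The plan is to establish homological smoothness of $A$ through Theorem \ref{smooth Hopf}, by showing that the trivial right $A$-module $k_\varepsilon$ is of type $FP$. By Proposition \ref{prop:FP-FPinfty} this amounts to two things: that $\pdim_{A^{op}}(k_\varepsilon)$ is finite, and that $k_\varepsilon$ is of type $FP_\infty$ over $A^{op}$. The finiteness of the projective dimension is immediate: since $B$ and $H$ are smooth, $\cd(B)$ and $\cd(H)$ are finite, so Proposition \ref{prop:finitecd} gives $\cd(A) < \infty$, and since $\cd(A) = \pdim_{A^{op}}(k_\varepsilon)$ (a consequence of Theorem \ref{ext hopf}), the projective dimension $\pdim_{A^{op}}(k_\varepsilon)$ is finite. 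Note also that by Remark \ref{ff=proj} faithful flatness of $A$ over $B$ is the same as projectivity, so the homology spectral sequence of Theorem \ref{thm : ss tor} is available. It thus remains to prove that $k_\varepsilon$ is of type $FP_\infty$ over $A^{op}$.

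To do so I would verify condition (iii) of Proposition \ref{characterization FP} for the right $A$-module $k_\varepsilon$: for any product $\prod A$ of copies of $A$ regarded as \emph{left} $A$-modules, one must show $\tor_i^A(k_\varepsilon, \prod A) = 0$ for $i \ge 1$ and that the natural map $k_\varepsilon \otimes_A (\prod A) \to \prod k$ is an isomorphism. I would feed $M = \prod A$ into the spectral sequence $E_{p,q}^2 = \tor_p^H(k_\varepsilon, \tor_q^B(k_\varepsilon, M)) \Rightarrow \tor_{p+q}^A(k_\varepsilon, M)$. The key input is that the inner terms essentially collapse. Since $k_\varepsilon$ is of type $FP_\infty$ over $B^{op}$ (by smoothness of $B$ and Theorem \ref{smooth Hopf}), Proposition \ref{characterization FP} lets me commute $\tor^B$ with the product, giving $\tor_q^B(k_\varepsilon, \prod A) \cong \prod \tor_q^B(k_\varepsilon, A)$; flatness of $A$ over $B$ kills this for $q \ge 1$, while for $q = 0$ Lemma \ref{lemma:iso tor0} (with $V = k$) identifies each factor with $H$, so that $\tor_0^B(k_\varepsilon, \prod A) \cong \prod H$ as left $H$-modules. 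The spectral sequence therefore degenerates to $\tor_p^A(k_\varepsilon, \prod A) \cong \tor_p^H(k_\varepsilon, \prod H)$. Applying the same reasoning one level up --- $k_\varepsilon$ is of type $FP_\infty$ over $H^{op}$ and $H$ is free over itself --- yields $\tor_p^H(k_\varepsilon, \prod H) \cong \prod \tor_p^H(k_\varepsilon, H)$, which vanishes for $p \ge 1$ and equals $\prod k$ for $p = 0$. This gives the required vanishing together with the isomorphism $k_\varepsilon \otimes_A (\prod A) \cong \prod k$.

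The delicate point --- and the step I expect to be the main obstacle --- is bookkeeping the module structures and naturality so that the abstract isomorphisms above are exactly the ones demanded by the criterion. Concretely, the identification $\tor_q^B(k_\varepsilon, \prod A) \cong \prod \tor_q^B(k_\varepsilon, A)$ must be checked to be an isomorphism of \emph{left $H$-modules} (the $H$-action being the one induced from the $A$-action, which commutes with the product projections), so that the second stage of the computation, taking place over $H$, is legitimate; and the edge homomorphism of the degenerate spectral sequence must be traced to confirm that the resulting map $k_\varepsilon \otimes_A (\prod A) \to \prod k$ is the canonical one $\overline{(a_j)_j} \mapsto (\varepsilon(a_j))_j$. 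Both verifications follow from the naturality of Theorem \ref{thm : ss tor} in $M$, of the product-comparison maps of Proposition \ref{characterization FP}, and of the isomorphism of Lemma \ref{lemma:iso tor0} (which is induced by $p$); applying the projections $\prod A \to A$ and functoriality assembles everything into the required natural comparison. Once condition (iii) is verified, $k_\varepsilon$ is of type $FP_\infty$ over $A^{op}$, and combined with the finiteness of $\pdim_{A^{op}}(k_\varepsilon)$ and Proposition \ref{prop:FP-FPinfty} we conclude that $k_\varepsilon$ is of type $FP$, whence $A$ is homologically smooth by Theorem \ref{smooth Hopf}.
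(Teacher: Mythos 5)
Your proposal is correct and follows essentially the same route as the paper: finiteness of $\cd(A)$ via Proposition \ref{prop:finitecd}, then verification of condition (iii) of Proposition \ref{characterization FP} for $M=\prod A$ by feeding it into the homology spectral sequence of Theorem \ref{thm : ss tor} and using the $FP_\infty$ property of the trivial module over $B$ and $H$ to commute $\tor$ with products. The only cosmetic difference is that you invoke Lemma \ref{lemma:iso tor0} and the freeness of $H$ directly where the paper packages the same computation into Proposition \ref{prop : acyc homo}.
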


\begin{proof}
We know from Proposition \ref{prop:finitecd} that $A$ has finite cohomological dimension, hence to prove that $A$ is homologicallly smooth, 
it remains to prove, by Proposition \ref{prop:FP-FPinfty} and Theorem \ref{duality Hopf}, that $k_\varepsilon $ is of type $FP_\infty$ as a right $A$-module. 

To prove that $k_\varepsilon $ is of type $FP_\infty$ as an $A$-module, we will use the characterization of Proposition \ref{characterization FP}, and hence we consider the $A$-module $M = \prod A$ where $\prod$ is an arbitrary direct product. We have to show that for $n\geq 1$, one has $\tor_{n}^A(k_\varepsilon ,M)=0$ and that $\tor_{0}^A(k_\varepsilon,M) \simeq  k_\varepsilon \otimes_A \left(\prod A\right)  \simeq \prod k_\varepsilon \simeq \prod \tor_0^{A}(k_\varepsilon,A)$. 


Under the hypothesis that $A$ is faithfully flat as a right and left $B$-module and regarding Remark \ref{ff=proj}, we can use Theorem \ref{thm : ss tor}. Thus, for every left $A$-module $M$ we get a spectral sequence $$E_{pq}^2 = \tor^H_p(k_\varepsilon,\tor_q^B(k_\varepsilon,M)) \implies \tor_{p+q}^A(k_\varepsilon,M).$$
The $B$-module $k_\varepsilon$ is of type $FP_\infty$ hence $\tor^B_q(k_\varepsilon,-)$ commutes with direct products. Moreover, $A$ is flat as a left $B$-module thus for $q\geq1$, we get that $$\tor^B_q(k_\varepsilon,\prod A) \simeq \prod \tor^B_q(k_\varepsilon,A) = \{0\}.$$
Hence we have $E_{pq}^2=\{0\}$ for $q>0$.

If $\pi_M$ (resp. $\pi_{A}$) denotes the natural projection of $M$ (resp. $A$) onto $\tor_0^B(k_\varepsilon,M) \simeq M/[M,B]$ (resp. $\tor^B_0(k_\varepsilon,A) \simeq A/[A,B]$), the natural isomorphism $\mu : \tor_0^B(k_\varepsilon,M) \to \prod \tor_0^B(k_\varepsilon,A)$ is given by $\mu(\pi_M\bigl(x_i\bigr)_i) = \bigl(\pi_{A}(x_i)\bigr)_i$ for any $x_i \in A$. Hence, for $h = p(a) \in H$, we have $$\mu(h\cdot \pi_M\bigl(x_i \bigr)_i) = \mu(\pi_M\bigl(ax_i \bigr)_i) = \bigl(\pi_{A}(ax_i)\bigr)_i = h\cdot\bigl((\pi_{A}(x_i))_i\bigr).$$ 
The natural isomorphism $\tor_0^B(k_\varepsilon,M) \simeq \prod \tor^B_0(k_\varepsilon,A)$ is thus an isomorphism of $H$-modules. Therefore, since the algebras $B$ and $H$ are homologically smooth, we get, using Proposition \ref{characterization FP}, for $p \geq 0$ 
$$E_{p0}^2=\tor_p^H(k_\varepsilon,\tor_0^B(k_\varepsilon,M)) \simeq \tor_p^H(k_\varepsilon,\prod\tor_0^B(k_\varepsilon,A)) \simeq \prod \tor_p^H(k_\varepsilon,\tor_0^B(k_\varepsilon,A)).$$
The $A$-module $A$ is projective, hence Proposition \ref{prop : acyc homo} ensures that \[\tor_p^H(k_\varepsilon,\tor_0^B(k_\varepsilon,A))=\{0\} \ \text{for $p\geq 1$.}\] Hence for $p\geq 1$ we have
$$E_{p,0}^2=\tor_p^H(k_\varepsilon,\tor_0^B(k_\varepsilon,M)) \simeq \prod \tor_p^H(k_\varepsilon,\tor_0^B(k_\varepsilon,A)) = \{0\}$$ 
We get that $E^2_{p,q}= \tor^H_p(k_\varepsilon,\tor_0^B(k_\varepsilon,M)) = \{0\}$ for $(p,q) \neq (0,0)$, and hence the spectral sequence ensures that $\HH_{n}(A,M)= \{0\}$ for $n>0$.


Finally, using the isomorphism in the proof of Theorem \ref{thm : ss tor} and the fact that $k_\varepsilon$ is of type $FP_\infty$ as a right $H$-module and a right $B$-module, we obtain 
\[
\begin{aligned} \tor_0^A(k_\varepsilon, \prod A) &\simeq \tor_0^H(k_\varepsilon,\tor_0^B(k_\varepsilon,\prod A))\\ &\simeq \tor^H_0(k_\varepsilon,\prod\tor^B_0(k_\varepsilon,A))\\ &\simeq \prod\tor^H_0(k_\varepsilon,\tor^B_0(B,A))\\ &\simeq \prod \tor_0^A(k_\varepsilon,A).
\end{aligned}
\]
It is not difficult to check that the above isomorphism is the natural map in the third item of Proposition \ref{characterization FP}. We thus conclude from Proposition \ref{characterization FP} that $A$ is homologically smooth.
\end{proof}

\section{Homological duality for exact sequences of Hopf algebras}\label{sec : duality ex seq H}

This section is dedicated to the proof of Theorem \ref{th seq}. 

We begin with some generalities.
Let $B$ be an algebra, let $M$ and $N$ be left $B$-modules. We consider the natural morphism $\phi : \Hom_B(N,B)\otimes_B M \to \Hom_B(N,M)$ defined by $\phi (f\otimes m) (n) = f(n)m$ for $m\in M,\ f\in \Hom_B(N,B)$ and $n\in N$. The following is a right version of \cite[Lemma 5.2]{bieri_homological_1983}, we give a proof for the sake of completeness.

\begin{prop}\label{prop : Bi 1}
    Let $B$ be an algebra, let $M$ be a left $B$-module and $N$ a finitely generated projective left $B$-module. The morphism $\phi:  \Hom_B(N,B) \otimes_B M \to \Hom_B(N,M)$ defined above is an isomorphism.
\end{prop}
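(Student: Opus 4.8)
The plan is to prove this by reduction to the free case, exploiting that the statement is compatible with finite direct sums and that a finitely generated projective module is a direct summand of a finitely generated free module. The morphism $\phi$ is clearly natural in $N$, so I would first verify that the class of modules $N$ for which $\phi$ is an isomorphism is closed under finite direct sums and under passage to direct summands (retracts).

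First I would treat the base case $N = B$. Here $\Hom_B(B,B) \otimes_B M \simeq B \otimes_B M \simeq M$ and $\Hom_B(B,M) \simeq M$, both via evaluation at $1$, and a direct check shows $\phi$ matches these identifications and is therefore an isomorphism. Next I would handle finite direct sums: if $N = N_1 \oplus N_2$, then both functors $\Hom_B(-,B) \otimes_B M$ and $\Hom_B(-,M)$ send finite direct sums in the first variable to direct sums (since $\Hom_B(-,B)$ turns finite direct sums into direct sums and $-\otimes_B M$ preserves direct sums, while $\Hom_B(-,M)$ also turns finite direct sums into products, which coincide with sums in the finite case), and the naturality of $\phi$ ensures it respects this decomposition. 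Hence by induction $\phi$ is an isomorphism whenever $N$ is finitely generated free.

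For the general case, since $N$ is finitely generated projective, there is a finitely generated free module $F$ and a module $N'$ with $F \simeq N \oplus N'$. The naturality of $\phi$ with respect to the split inclusion $N \hookrightarrow F$ and the projection $F \twoheadrightarrow N$ gives a commutative diagram expressing $\phi_N$ as a retract of $\phi_F$. Since $\phi_F$ is an isomorphism, and an isomorphism restricts to an isomorphism on any direct summand compatible with the splitting, $\phi_N$ is an isomorphism as well.

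I do not expect any genuine obstacle here, as the argument is the standard ``check on $B$, extend by additivity, descend to summands'' pattern. The only point requiring a little care is the bookkeeping showing that $\phi$ is compatible with the direct-sum decompositions, i.e. that under the natural isomorphisms identifying the source and target with sums of the corresponding pieces, $\phi_N$ really is the direct sum $\phi_{N_1} \oplus \phi_{N_2}$; this follows formally from the naturality of $\phi$ in $N$ applied to the canonical inclusions and projections, so it is routine rather than delicate.
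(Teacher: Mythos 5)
Your argument is correct and is essentially the same as the paper's proof: reduce by naturality to a finitely generated free module via a direct summand decomposition, then check the case $N=B$ where $\phi$ is the identity. No issues.
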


\begin{proof}
    There exists $B$-modules $L$ and $X$ such that $L$ is free and $L \simeq N \oplus X$. By naturality of $\phi$, we only need to show that $\phi : \Hom_B(L,B) \otimes_B M \to \Hom_B(L,M)$ is bijective and again by naturality, we only need to show that it is an isomorphism for $L\simeq B$. In this case $\phi$ is the identity hence it is bijective. 
\end{proof}

Now,  we replace the left $B$-module $N$ by a $B$-projective resolution $P^\bullet \to N$. This yields natural morphisms $\phi^i : \ext^i_B(N,B) \otimes_B M \to \ext_B^i(N,M)$. Under appropriate assumptions on $B$ and $M$, these morphisms are bijective.

\begin{prop}\label{prop : Bi 2}
    Let $B$ be algebra and $M$ and $N$ left $B$-modules. If $N$ is of type $FP_\infty$ and $M$ is flat, then the morphisms $$ \phi^i : \ext^i_B(N,B) \otimes_B M \to \ext_B^i(N,M)$$ are isomorphisms for any $i\geq 0$.
\end{prop}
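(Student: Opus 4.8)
The plan is to reduce the statement to Proposition \ref{prop : Bi 1} applied in each homological degree, combined with the exactness of tensoring by a flat module. First I would use the hypothesis that $N$ is of type $FP_\infty$ to choose a projective resolution $P_\bullet \to N$ in which every $P_i$ is a \emph{finitely generated} projective left $B$-module. Applying $\Hom_B(-,B)$ yields a cochain complex $\Hom_B(P_\bullet,B)$ of right $B$-modules whose cohomology is by definition $\ext^\bullet_B(N,B)$, while applying $\Hom_B(-,M)$ yields a cochain complex whose cohomology is $\ext^\bullet_B(N,M)$.

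The key observation is that the natural morphism $\phi$ supplies, in each degree $i$, a map $\Hom_B(P_i,B)\otimes_B M \to \Hom_B(P_i,M)$, and that by naturality of $\phi$ these assemble into a morphism of cochain complexes $\Hom_B(P_\bullet,B)\otimes_B M \to \Hom_B(P_\bullet,M)$ (compatibility with the differentials is exactly naturality of $\phi$ in its first argument). Since each $P_i$ is finitely generated projective, Proposition \ref{prop : Bi 1} shows that every one of these degreewise maps is an isomorphism, so we obtain an isomorphism of complexes.

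Next I would invoke the flatness of $M$. Because $M$ is flat as a left $B$-module, the functor $-\otimes_B M$ is exact on right $B$-modules, and exact functors commute with the formation of cohomology. Hence the natural map
$$H^i\bigl(\Hom_B(P_\bullet,B)\bigr)\otimes_B M \longrightarrow H^i\bigl(\Hom_B(P_\bullet,B)\otimes_B M\bigr)$$
is an isomorphism, and its source is precisely $\ext^i_B(N,B)\otimes_B M$. Composing this with the cohomology isomorphism induced by the isomorphism of complexes from the previous paragraph identifies $\ext^i_B(N,B)\otimes_B M$ with $H^i\bigl(\Hom_B(P_\bullet,M)\bigr)=\ext^i_B(N,M)$.

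Finally, the one point deserving genuine care is to check that the composite isomorphism produced by this chase coincides with the map $\phi^i$ of the statement, rather than being merely some abstract isomorphism between the two spaces. This is a naturality verification: the maps $\phi$ are natural in the first variable, so the induced maps on $\ext$ are the ones computed through any chosen projective resolution, and one traces the identifications above to see that the result is exactly $\phi^i$. I expect this bookkeeping to be the only delicate step, everything else being a formal consequence of Proposition \ref{prop : Bi 1} and the exactness of $-\otimes_B M$.
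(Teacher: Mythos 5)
Your proof is correct and follows essentially the same route as the paper's: take a finitely generated projective resolution, apply Proposition \ref{prop : Bi 1} degreewise, and use flatness of $M$ to commute $-\otimes_B M$ with cohomology. The only difference is that you explicitly flag the verification that the composite isomorphism is the natural map $\phi^i$, a point the paper leaves implicit.
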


\begin{proof}
    Let $P_\bullet \to N$ be a projective resolution with $P_i$ finitely generated for $i\in \mathbb{N}$. Using Proposition \ref{prop : Bi 1}, we have natural isomorphisms $\Hom_B(P_i,B) \otimes_B M \simeq \Hom_B(P_i,M)$ for $i \in \mathbb{N}$.

    This yields to isomorphisms in cohomology $\Homo^i\bigl(\Hom_B(P_\bullet,B) \otimes_B M\bigr) \simeq \Homo^i\bigl(\Hom_B(P_\bullet,M)\bigr) = \ext^i_B(N,M)$ for $i \in \mathbb{N}$. Finally, using flatness of $M$, we have $$\Homo^i\bigl(\Hom_B(P_\bullet,B) \otimes_B M\bigr) \simeq \Homo^i\bigl(\Hom_B(P_\bullet,B)\bigr) \otimes_B M \simeq \ext^i_B(N,B) \otimes_B M$$ which concludes the proof.
\end{proof}

\begin{prop}\label{prop : Bi 4}
    Let $k \longrightarrow B \overset{i}{\longrightarrow} A \overset{p}{\longrightarrow} H \longrightarrow k$ be an exact sequence of Hopf algebras with bijective antipodes, let $X$ be a right $A$-module and let $M$ be a left $A$-module. Then there exists a left $A$-module structure on $X\otimes_B M$ defined by
    \[a \cdot (x \otimes_B m) =xS^{-1}(a_{(2)}) \otimes_B a_{(1)}m, \  a\in A, \ x\in X, m\in M. \]
For $M=A$, endowing $X\otimes H$ with the structure of left $A$-module given by $a' \cdot (x \otimes p(a)) = x \otimes p(a' a)$ for any $a',a\in A$ and $x\in X$,  we have an isomorphism of left $A$-modules $u : X\otimes H \to X\otimes_B A$.    
\end{prop}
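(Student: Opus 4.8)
The plan is to establish the two assertions in order, since the isomorphism of the second depends on the module structure built in the first.

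For the first assertion, I would begin by noting that the very same formula $a\cdot(x\otimes m)=\sum x\,S^{-1}(a_{(2)})\otimes a_{(1)}m$ already defines a left $A$-module structure on the outer tensor product $X\otimes_k M$: associativity is a short Sweedler computation in which the antipode-reversing identity $S^{-1}((a'a)_{(2)})=S^{-1}(a_{(2)})S^{-1}(a'_{(2)})$ matches the product $a'_{(1)}a_{(1)}$ coming from $\Delta(a'a)$, and the unit axiom is immediate. The substantive point is then that this action descends to $X\otimes_B M$, i.e. that the subspace generated by the elements $xb\otimes m-x\otimes bm$, $b\in B$, is $A$-stable.

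This descent amounts to the balancing identity
\begin{equation}
\sum xb\,S^{-1}(a_{(2)})\otimes_B a_{(1)}m \;=\; \sum x\,S^{-1}(a_{(2)})\otimes_B a_{(1)}bm,\tag{$\star$}
\end{equation}
and proving $(\star)$ is the main obstacle. I would start from the right-hand side and rewrite the product $a_{(1)}b$ by means of the conjugation identity $cb=\sum c_{(3)}\,b\,S^{-1}(c_{(2)})\,c_{(1)}$, valid for $c\in A$, $b\in B$ and verified using $\sum S^{-1}(y_{(2)})y_{(1)}=\varepsilon(y)$. The key input is Lemma \ref{lem : B stable}, in its form involving $S^{-1}$ (this is exactly where the bijectivity of the antipode enters): it shows that the leftmost block $c_{(3)}\,b\,S^{-1}(c_{(2)})$ lies in $B$, so it may be slid across $\otimes_B$ onto the $X$-factor. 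Taking $c=a_{(1)}$ with a three-fold coproduct, so that the antipode argument becomes the outermost leg $a_{(4)}$, the right-hand side is turned into $\sum x\,S^{-1}(a_{(4)})a_{(3)}\,b\,S^{-1}(a_{(2)})\otimes_B a_{(1)}m$; the counit relation $\sum S^{-1}(a_{(4)})a_{(3)}=\varepsilon(\cdots)$ then collapses the two outer legs and returns precisely the left-hand side. The delicate choice is to use this $S^{-1}$-flavoured conjugation rather than the more familiar $S$-conjugation: only this choice produces a surviving antipode factor that cancels against $S^{-1}(a_{(2)})$. Granting $(\star)$, the maps $a\cdot(-)$ are well defined on $X\otimes_B M$ and inherit the module axioms from $X\otimes_k M$.

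For the second assertion I would define $u$ as the orbit map of $x\otimes_B 1$, namely $u(x\otimes p(a))=a\cdot(x\otimes_B 1)=\sum x\,S^{-1}(a_{(2)})\otimes_B a_{(1)}$, using the structure just produced. To see it is well defined it suffices to check that it annihilates $\Ker p=B^+A$: for $a=bc$ with $b\in B^+$ and $c\in A$, the factor $S^{-1}(b_{(2)})\in B$ may be slid across $\otimes_B$, leaving the scalar $\sum S^{-1}(b_{(2)})b_{(1)}=\varepsilon(b)=0$. Since $u$ is an orbit map, its $A$-linearity for the two prescribed structures is automatic, because $a'\cdot u(x\otimes p(a))=(a'a)\cdot(x\otimes_B 1)=u(x\otimes p(a'a))$. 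It then remains to exhibit an inverse; the natural candidate is $w(x\otimes_B a)=\sum x a_{(2)}\otimes p(a_{(1)})$, which is well defined on the tensor product over $B$ because $p$ restricts to the counit on $B$. A short computation shows that $w\circ u$ and $u\circ w$ reduce to the identity via $\sum S^{-1}(a_{(3)})a_{(2)}=\varepsilon(\cdots)$ and $\sum a_{(3)}S^{-1}(a_{(2)})=\varepsilon(\cdots)$ respectively, so $u$ is the desired isomorphism of left $A$-modules. In summary, the balancing relation $(\star)$ is the single genuinely delicate computation, everything else being either formal or a direct application of the antipode axioms.
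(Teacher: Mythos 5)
Your proposal is correct and follows essentially the same route as the paper: the balancing identity is verified by the same Sweedler manipulation (inserting $\varepsilon = S^{-1}(a_{(4)})a_{(3)}$ and sliding the element $a_{(3)}bS^{-1}(a_{(2)})\in B$ of Lemma \ref{lem : B stable} across $\otimes_B$ — your "conjugation identity" is this computation read in the opposite direction), and your inverse $w$ is the paper's map $v$. The only (welcome) streamlinings are the orbit-map observation making $A$-linearity of $u$ automatic and the explicit check that $u$ kills $X\otimes B^+A$, which the paper leaves to the reader.
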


\begin{proof}
We first check that the left $A$-action on $X\otimes_B M$ is well defined. For $ a\in A$, $x\in X$, $m \in M$ and $b\in B$, we have 
    \begin{align*}
        a \cdot (xb \otimes_B m) &= xbS^{-1}(a_{(2)}) \otimes_B a_{(1)}m\\
        &= x\varepsilon(a_{(3)})bS^{-1}(a_{(2)}) \otimes_B a_{(1)}m\\
        &= xS^{-1}(a_{(4)})a_{(3)}bS^{-1}(a_{(2)}) \otimes_B a_{(1)}m\\
        &= xS^{-1}(a_{(4)}) \otimes_B a_{(3)}bS^{-1}(a_{(2)})a_{(1)}m \quad {\rm using ~ Lemma \ref{lem : B stable}}\\
        &= xS^{-1}(a_{(2)}) \otimes_B a_{(1)}bm\\
        &= a \cdot(x\otimes_B bm) 
    \end{align*}
 and  hence the $A$-action on $X\otimes_B M$ is well defined.
Let
    \begin{align*}
        u : X\otimes H \ & \longrightarrow X\otimes_B A\\
        x \otimes p(a) & \longmapsto xS^{-1}(a_{(2)}) \otimes_B a_{(1)}.  
    \end{align*}
    It is not difficult to check that $u$ is well-defined.
    For $a,a'\in A$ and $x\in X$, we have $$u(a'\cdot(x\otimes p(a))) = u(x\otimes p(a'a)) = xS^{-1}(a_{(2)})S^{-1}(a'_{(2)}) \otimes_B a'_{(1)}a_{(1)} = a' \cdot u(x\otimes p(a))$$ and hence $u$ is $A$-linear. Now  consider the linear map 
    \begin{align*}
        v : X\otimes A \ & \longrightarrow X\otimes H\\
        x \otimes a & \longmapsto xa_{(2)} \otimes p(a_{(1)})  
    \end{align*}
    For $x\in X, \ a\in A$ and $b\in B$, we have $$v(xb\otimes a) = xb a_{(2)} \otimes p(a_{(1)}) = xb_{(2)}a_{(2)} \otimes p(b_{(1)})p(a_{(1)}) = v(x\otimes ba)$$ since $B = {}^{co H} A$. Thus $v$ induces a map
     \begin{align*}
        v : X\otimes_B A \ & \longrightarrow X\otimes H\\
        x \otimes_B a & \longmapsto xa_{(2)} \otimes p(a_{(1)})  
    \end{align*}
    For $x\in X$ and $a\in A$, we have 
    \begin{align*}
u\circ v (x\otimes_B a) &= u\bigl(xa_{(2)} \otimes p(a_{(1)})\bigr) = xa_{(3)} S^{-1}(a_{(2)}) \otimes_B a_{(1)} = x\otimes a, \ \text{and} \\
v\circ u (x\otimes p(a)) &= v\bigl(xS^{-1}(a_{(2)}) \otimes_B a_{(1)}\bigr) = xS^{-1}(a_{(3)}) a_{(2)} \otimes p(a_{(1)}) = x\otimes p(a).
    \end{align*}
    Hence $u$ is an isomorphism of $A$-modules.
\end{proof}

Let $k \longrightarrow B \overset{i}{\longrightarrow} A \overset{p}{\longrightarrow} H \longrightarrow k$ be an exact sequence of Hopf algebras with $A$ projective as a left $B$-module. If $M$ is a left $A$-module, recall from the previous section that $\ext_B^\bullet({}_\varepsilon k ,M)$ is endowed with a natural left $A$-module structure defined at the cochain level by $$\bigl(a \rightharpoonup f \bigr) (x) = a_{(1)}f(S(a_{(2)})x)$$ for $a\in A, \ f\in \Hom_B(P_q,M)$ and $x\in P_q$ where $P_\bullet$ is an $A$-projective resolution of ${}_\varepsilon k$. This is this action of $A$ induced by the action of $H$ defined in Proposition \ref{structure 2}.

Moreover $\ext_B^\bullet({}_\varepsilon k,B)$ is endowed with a natural structure of right $A$-module, extending its natural $B$-module structure, defined as follows. Consider an $A$-projective resolution $P_\bullet$ of ${}_\varepsilon k$. For any $q \geq 0$ and for $a\in A,\ f\in \Hom_B(P_q,B), \ x \in P_q$, put $$(f \leftharpoonup  a)(x) = S(a_{(2)})f(S^2(a_{(1)})x)a_{(3)}. $$ 
By Lemma \ref{lem : B stable} we have $S(a_{(2)})f(S^2(a_{(1)})x)a_{(3)} \in B$ for $a\in A,\ f\in \Hom_B(P_q,B)$ and $ x \in P_q$. Moreover, one easily checks using again Lemma \ref{lem : B stable} that $f\leftharpoonup a$ is a morphism of $B$-modules. Hence the above formula defines an $A$-module structure on the complex $\Hom_B(P^\bullet,B)$, and hence on $\ext_B^\bullet({}_\varepsilon k,B)$.

\begin{prop}\label{prop : Bi 3}
    Let $k \longrightarrow B \overset{i}{\longrightarrow} A \overset{p}{\longrightarrow} H \longrightarrow k$ be an exact sequence of Hopf algebras such that $A$ is faithfully flat as a left $B$-module, and let $M$ be a left $A$-module. The morphisms $$ \phi^i :  \ext^i_B({}_\varepsilon k,B) \otimes_B M \longrightarrow \ext_B^i({}_\varepsilon k ,M)$$ defined above are morphisms of left $A$-modules, where the left $A$-module structure on $\ext_B^i({}_\varepsilon k,B) \otimes_B M$ is the one of Proposition \ref{prop : Bi 4}, i.e. is given by $$a\rightarrow ( [f] \otimes_B m)  = [f] \leftharpoonup S^{-1}(a_{(2)}) \otimes_B a_{(1)}m$$ for $a\in A,\ m\in M$ and $[f]\in \ext_B^i({}_\varepsilon k,B)$.
\end{prop}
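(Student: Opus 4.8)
The plan is to verify the claim at the level of cochains and then pass to cohomology. I would fix an $A$-projective resolution $P_\bullet \to {}_\varepsilon k$; since $A$ is faithfully flat, hence projective, as a left $B$-module, its restriction is a $B$-projective resolution and so computes all the spaces $\ext_B^\bullet({}_\varepsilon k, -)$. With this single resolution, all the data in the statement are defined at the cochain level: the right $A$-action $\leftharpoonup$ on the complex $\Hom_B(P_\bullet, B)$, the left $A$-action $\rightharpoonup$ on $\Hom_B(P_\bullet, M)$, the induced left $A$-action of Proposition \ref{prop : Bi 4} on $\Hom_B(P_\bullet, B)\otimes_B M$, and the map $\phi$ itself, $\phi(f\otimes_B m)(x) = f(x)m$. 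Because $\phi$ is compatible with the differentials, it suffices to show that the cochain map $\phi : \Hom_B(P_i,B)\otimes_B M \to \Hom_B(P_i,M)$ intertwines these two $A$-actions; the required statement about $\phi^i$ then follows by passing to cohomology.

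The core is then a direct Sweedler computation. For $a\in A$, $f\in\Hom_B(P_i,B)$, $m\in M$ and $x\in P_i$, I would unwind the left-hand side as
\[
\phi\bigl(a\rightarrow(f\otimes_B m)\bigr)(x) = \phi\bigl((f\leftharpoonup S^{-1}(a_{(2)}))\otimes_B a_{(1)}m\bigr)(x) = \bigl(f\leftharpoonup S^{-1}(a_{(2)})\bigr)(x)\cdot a_{(1)}m.
\]
To evaluate $f\leftharpoonup c$ with $c = S^{-1}(a_{(2)})$, I would use that $S^{-1}$ is an anti-coalgebra map, i.e. $\Delta(S^{-1}(h)) = S^{-1}(h_{(2)})\otimes S^{-1}(h_{(1)})$, to read off the Sweedler components of $c$, together with $S\circ S^{-1} = \id$ and $S^2\circ S^{-1} = S$. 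This turns $\bigl(f\leftharpoonup S^{-1}(a_{(2)})\bigr)(x) = S(c_{(2)})f(S^2(c_{(1)})x)c_{(3)}$ into $a_{(3)}f(S(a_{(4)})x)S^{-1}(a_{(2)})$, so that the whole expression becomes
\[
a_{(3)}\,f\bigl(S(a_{(4)})x\bigr)\,S^{-1}(a_{(2)})\,a_{(1)}\,m.
\]

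Finally I would collapse the factor $S^{-1}(a_{(2)})a_{(1)}$ using the antipode identity for a bijective antipode, $\sum S^{-1}(h_{(2)})h_{(1)} = \varepsilon(h)1$, combined with coassociativity and the counit axiom to reindex $a_{(3)},a_{(4)}$ as $a_{(1)},a_{(2)}$. This yields $\sum a_{(1)}f(S(a_{(2)})x)m$, which is exactly $(a\rightharpoonup \phi(f\otimes_B m))(x)$, establishing the $A$-linearity of $\phi$ and hence of $\phi^i$. The only real obstacle is careful Sweedler bookkeeping: the two points that must be handled correctly are the anti-coalgebra behaviour of $S^{-1}$ when expanding $f\leftharpoonup S^{-1}(a_{(2)})$, and the coassociativity reindexing needed to legitimately apply $\sum S^{-1}(h_{(2)})h_{(1)} = \varepsilon(h)1$ to two of the four Sweedler legs; everything else is routine, and well-definedness over $\otimes_B$ is already guaranteed by Proposition \ref{prop : Bi 4} and the construction of the actions.
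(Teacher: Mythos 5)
Your proof is correct and takes essentially the same route as the paper: a direct cochain-level Sweedler computation showing $\phi\bigl(a\rightarrow(f\otimes_B m)\bigr)(x) = a_{(3)}f(S(a_{(4)})x)S^{-1}(a_{(2)})a_{(1)}m = a_{(1)}f(S(a_{(2)})x)m = \bigl(a\rightharpoonup\phi(f\otimes_B m)\bigr)(x)$, then passing to cohomology. The paper's proof is exactly this computation, with the anti-coalgebra bookkeeping for $S^{-1}$ left implicit.
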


\begin{proof}
    Let $P_\bullet$ be a projective resolution of ${}_\varepsilon k$ by $A$-modules. At the cochain level, we have for $a\in A,\ m\in M,\ f\in \Hom_B(P_i,B)$ and $x\in P_i$
    \begin{align*}
        \phi^i\biggl( a\rightarrow ( f \otimes m)\biggr) (x) &= \phi^i\biggl( f \leftharpoonup S^{-1}(a_{(2)}) \otimes a_{(1)}m\biggr) (x) \\ 
        &= ( f \leftharpoonup S^{-1}(a_{(2)})) (x) a_{(1)}m\\
        &=  a_{(3)} f(S(a_{(4)})x) S^{-1}(a_{(2)}) a_{(1)} m\\
        &= a_{(1)}f(S(a_{(2)})x) m \\
        &= \biggl( a\rightharpoonup \bigl(\phi^i (f\otimes m) \bigr)   \biggr) (x) 
    \end{align*}
 and from this we deduce the announced result.   
\end{proof}

We can now prove the homological equality announced in Section \ref{Sec:Smoothness}.

\begin{thm}\label{coro : cd equality}
    Let $k \longrightarrow B \overset{i}{\longrightarrow} A \overset{p}{\longrightarrow} H \longrightarrow k$ be an exact sequence of Hopf algebras with bijective antipodes such that $A$ is faithfully flat as a left and right $B$-module. If $B$ is homologically smooth and $H$ has finite cohomological dimension, then we have $$\cd(A) = \cd(B) + \cd(H).$$ 
\end{thm}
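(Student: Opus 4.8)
The inequality $\cd(A)\le \cd(B)+\cd(H)$ is already established in Proposition \ref{prop:finitecd}, so the whole point is to prove the reverse inequality $\cd(A)\ge \cd(B)+\cd(H)$. Write $d_B=\cd(B)$ and $d_H=\cd(H)$; both are finite, $d_B$ because $B$ is homologically smooth and $d_H$ by hypothesis. I will use throughout the identification $\cd(-)=\pdim_{(-)}({}_\varepsilon k)$ valid for any Hopf algebra: one direction is Theorem \ref{ext hopf}, and the other follows by noting that every left module $N$ underlies a bimodule (keep the original left action and let the right action be through the counit), whose adjoint module is exactly $N$. Thus it suffices to produce a single left $A$-module $M$ with $\ext^{d_B+d_H}_A({}_\varepsilon k,M)\neq 0$.

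The plan is to read this off the top corner of the spectral sequence of Theorem \ref{thm : ss ext}, $E_2^{p,q}=\ext^p_H({}_\varepsilon k,\ext^q_B({}_\varepsilon k,M))\Rightarrow \ext^{p+q}_A({}_\varepsilon k,M)$. Since $\pdim_H({}_\varepsilon k)=d_H$ and $\pdim_B({}_\varepsilon k)=d_B$, the page $E_2$ is concentrated in the rectangle $0\le p\le d_H$, $0\le q\le d_B$. The corner $(d_H,d_B)$ therefore carries no nonzero incoming or outgoing differential on any page, so $E_\infty^{d_H,d_B}=E_2^{d_H,d_B}$; moreover it is the only term of total degree $d_H+d_B$, whence $\ext^{d_H+d_B}_A({}_\varepsilon k,M)\cong E_2^{d_H,d_B}=\ext^{d_H}_H({}_\varepsilon k,\ext^{d_B}_B({}_\varepsilon k,M))$. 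It thus remains to choose $M$ so that this corner is nonzero.

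Here the key computation is the $H$-module structure of $\ext^{d_B}_B({}_\varepsilon k,M)$. Since $B$ is smooth, ${}_\varepsilon k$ is of type $FP$ over $B$ by Theorem \ref{smooth Hopf}, so $X:=\ext^{d_B}_B({}_\varepsilon k,B)\neq 0$ (it computes the top projective dimension, Proposition \ref{prop:FP-FPinfty}), and $\ext_B^{d_B}({}_\varepsilon k,-)$ commutes with direct sums by Proposition \ref{FP ext}. Taking $M=A\otimes_k V$ for a vector space $V$ — a direct sum of copies of the left $A$-module $A$, which is flat over $B$ by faithful flatness — Propositions \ref{prop : Bi 2}, \ref{prop : Bi 3} and \ref{prop : Bi 4} combine to give an isomorphism of left $H$-modules $\ext^{d_B}_B({}_\varepsilon k,A\otimes_k V)\cong X\otimes_B(A\otimes_k V)\cong (X\otimes_k V)\otimes_k H$, where $H$ acts by left multiplication on the last factor. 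In other words this is the free $H$-module of rank $\dim_k(X\otimes_k V)=\dim_k X\cdot \dim_k V$. (Bijectivity of the antipode enters here, through Proposition \ref{prop : Bi 4}.)

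Finally I expect the main obstacle to be that $H$ is not assumed smooth, so $\ext^{d_H}_H({}_\varepsilon k,H)$ may vanish and one cannot simply take $M=A$. This is to be circumvented by enlarging $V$. Because $\pdim_H({}_\varepsilon k)=d_H$ is finite, the last characterization of projective dimension recalled in Subsection 2.1 furnishes a free $H$-module $H^{(J)}$ with $\ext^{d_H}_H({}_\varepsilon k,H^{(J)})\neq 0$. Choosing $V$ with $\dim_k V\ge |J|$ and using $\dim_k X\ge 1$, the free module $(X\otimes_k V)\otimes_k H$ has rank at least $|J|$ and hence contains $H^{(J)}$ as a direct summand; since $\ext^{d_H}_H({}_\varepsilon k,-)$ respects the splitting of a direct sum into two summands, its value on $(X\otimes_k V)\otimes_k H$ retains $\ext^{d_H}_H({}_\varepsilon k,H^{(J)})\neq 0$ as a direct summand. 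The corner $E_2^{d_H,d_B}$ is therefore nonzero for $M=A\otimes_k V$, giving $\ext^{d_H+d_B}_A({}_\varepsilon k,M)\neq 0$ and thus $\cd(A)\ge d_B+d_H$, which would complete the proof.
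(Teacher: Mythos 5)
Your proposal is correct and follows essentially the same route as the paper: the reverse inequality is extracted from the corner $E_2^{\cd(H),\cd(B)}$ of the spectral sequence of Theorem \ref{thm : ss ext} applied to $M=V\otimes A$, using Propositions \ref{prop : Bi 2}, \ref{prop : Bi 3} and \ref{prop : Bi 4} to identify $\ext^{\cd(B)}_B({}_\varepsilon k,V\otimes A)$ with a free $H$-module, and a free $H$-module with nonvanishing top $\ext$ (available since $\pdim_H({}_\varepsilon k)$ is finite) to force the corner to be nonzero. The only cosmetic difference is that the paper fixes $V$ at the outset so that $\ext^{\cd(H)}_H({}_\varepsilon k,V\otimes H)\neq 0$ and then observes that the corner contains $V\otimes H$ as a direct summand, whereas you enlarge $V$ afterwards; the two devices are interchangeable.
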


\begin{proof}
    Let $n = \cd(B)$ and $m = \cd(H)$. There is a free left $H$-module $F = V \otimes H$ such that $\ext^m_H({}_\varepsilon k, F) \neq 0$. Following Theorem \ref{thm : ss ext} for the left $A$-module $V\otimes A$, we get the spectral sequence $$E^{p,q}_2 = \ext_{H}^p({}_\varepsilon k,\ext_{B}^q({}_\varepsilon k,V\otimes A)) \implies \ext^{p+q}_{A}({}_\varepsilon k,V\otimes A).$$ 
    For $p>\cd(H)$ or $q>\cd(B)$, we have $E_2^{p,q} = 0$ hence, reasoning as in the proof of Proposition \ref{prop:finitecd}, we get that $$\ext^{m+n}_A({}_\varepsilon k, V \otimes A)\simeq E^{p,q}_2 \simeq \ext^m_H({}_\varepsilon k, \ext^n_B({}_\varepsilon k, V\otimes A)).$$

    Then, using Proposition \ref{prop : Bi 2} and Proposition \ref{prop : Bi 3} we have $\ext^n_B({}_\varepsilon k, V\otimes A) \simeq \ext^n_B({}_\varepsilon k, B) \otimes_B (V\otimes A)$ as left $A$-modules. Since the action on $V\otimes A$ is simply multiplication on $A$, we have $\ext^n_B({}_\varepsilon k, B) \otimes_B (V\otimes A) \simeq (\ext^n_B({}_\varepsilon k, B) \otimes V) \otimes_B A$ and using Proposition \ref{prop : Bi 4} with $X = \ext^n_B({}_\varepsilon k, B) \otimes V$, we get that $\ext^n_B({}_\varepsilon k, V\otimes A) \simeq \ext^n_B({}_\varepsilon k, B) \otimes V \otimes H$ as left $A$-modules where the $A$-action on $\ext^n_B({}_\varepsilon k, B) \otimes V \otimes H$ is induced by the multiplication in $H$ and the surjective morphism $p:A \to H$. Therefore, we have $\ext^n_B({}_\varepsilon k, V\otimes A) \simeq \ext^n_B({}_\varepsilon k, B) \otimes V \otimes H$  as $H$-modules where the action on the left term is the one described in Proposition \ref{structure 2} and the action on the right term is the multiplication in $H$. Hence $\ext^{m+n}_A({}_\varepsilon k, V \otimes A) \simeq \ext^m_H({}_\varepsilon k, \ext^n_B({}_\varepsilon k, B) \otimes V \otimes H)$.

    Finally, since $\cd(B) = n$ and $B$ is homologically smooth, we have $\ext^n_B({}_\varepsilon k, B)\neq 0$, thus $\ext^n_B({}_\varepsilon k, B) \otimes V \otimes H$ can be written as a direct sum of copies of $V\otimes H$ and $\ext^m_H({}_\varepsilon k, \ext^n_B({}_\varepsilon k, B) \otimes V \otimes H)$ is non zero which concludes the proof.
\end{proof}

\begin{prop}\label{iso lemma hopf}
    Let $k \longrightarrow B \overset{i}{\longrightarrow} A \overset{p}{\longrightarrow} H \longrightarrow k$ be an exact sequence of Hopf algebras with bijective antipodes such that $A$ is faithfully flat as a left and right $B$-module and let $M$ be a left $A$-module. For $q\geq 0$, there is an isomorphism of $H$-modules $$\ext^q_{B}({}_\varepsilon k,B) \otimes H \simeq \ext_{B}^q({}_\varepsilon k,A)$$ where the $H$-module structure on $\ext^q_{B}({}_\varepsilon k,B) \otimes H$ is induced by left multiplication on $H$ and the $H$-module structure on $\ext_{B}^q({}_\varepsilon k ,A)$ is the one defined in Proposition \ref{structure 2}.     
    This isomorphism is given by $$[f]\otimes p(a) \mapsto p(a) \cdot[f]$$ for $[f]\in \ext^q_{B}({}_\varepsilon k,B)$ and $a\in A$.
\end{prop}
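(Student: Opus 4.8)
The plan is to realize the stated map as a composite of two isomorphisms already established in the text, and then match up the module structures. Write $X = \ext^q_B({}_\varepsilon k, B)$, equipped with the right $A$-action $\leftharpoonup$ introduced just before Proposition \ref{prop : Bi 3}; by Remark \ref{ff=proj} faithful flatness makes $A$ projective over $B$, so a projective resolution $P_\bullet \to {}_\varepsilon k$ by $A$-modules restricts to a projective resolution of the $B$-module ${}_\varepsilon k$, and all the actions $\leftharpoonup$, $\rightharpoonup$ below live at the level of the complex $\Hom_B(P_\bullet, -)$. First I would invoke Proposition \ref{prop : Bi 4} with $M = A$ and this $X$: it furnishes an isomorphism of left $A$-modules $u : X \otimes H \to X \otimes_B A$, $u([f] \otimes p(a)) = [f]\leftharpoonup S^{-1}(a_{(2)}) \otimes_B a_{(1)}$, where $X \otimes H$ carries the action $a' \cdot ([f] \otimes p(a)) = [f] \otimes p(a'a)$, i.e. the $H$-action by left multiplication on $H$. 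Next, Proposition \ref{prop : Bi 2} (applied to $N = {}_\varepsilon k$ and the flat left $B$-module $M = A$) gives an isomorphism $\phi^q : X \otimes_B A \to \ext^q_B({}_\varepsilon k, A)$, which by Proposition \ref{prop : Bi 3} is left $A$-linear for the Proposition \ref{prop : Bi 4}-action on the source and the $\rightharpoonup$-action of Proposition \ref{structure 2} on the target.

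The proposed isomorphism is then $\Phi = \phi^q \circ u$, which is bijective because both factors are. It is left $A$-linear, hence $H$-linear for the structures of the statement: the $A$-action on $X \otimes H$ factors through $p$ to the left-multiplication $H$-action, and the $\rightharpoonup$-action on $\ext^q_B({}_\varepsilon k, A)$ factors through $p$ to the $H$-action of Proposition \ref{structure 2}. It remains to check that $\Phi$ is given by the announced formula. For $[f] \in X$ and $a \in A$ we have $u([f]\otimes p(a)) = [f]\leftharpoonup S^{-1}(a_{(2)}) \otimes_B a_{(1)}$, which is exactly $a \to ([f]\otimes_B 1)$ for the action of Proposition \ref{prop : Bi 4}; since $\phi^q$ is $A$-linear and $\phi^q([f]\otimes_B 1) = [f]$ (viewed in $\ext^q_B({}_\varepsilon k, A)$ through $B \hookrightarrow A$, as the cochain map sends $g \otimes_B 1$ to $x \mapsto g(x)$), we obtain $\Phi([f]\otimes p(a)) = a \rightharpoonup [f] = p(a) \cdot [f]$, as required. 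Equivalently one could define $\Phi$ directly at the cochain level by $(p(a) \cdot f)(x) = a_{(1)} f(S(a_{(2)})x)$ and verify, via Lemma \ref{lem : B stable}, that this is a well-defined $H$-equivariant chain map; identifying it with $\phi^q \circ u$ is then what delivers bijectivity.

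The part I expect to be the crux is the isomorphism $\phi^q$, that is, the invocation of Proposition \ref{prop : Bi 2}: the map $u$ is an isomorphism with no finiteness hypothesis, and everything else is structure bookkeeping. Proposition \ref{prop : Bi 2} needs two inputs: flatness of $A$ over $B$, which is immediate from faithful flatness, and the genuinely restrictive one, namely that ${}_\varepsilon k$ be of type $FP_\infty$ as a $B$-module. Finiteness really is needed here, since already on a free rank-one module the cochain map underlying $\phi$ carries $\Hom_B(A,B) \otimes_B A$ into $\Hom_B(A,A)$ with image containing $\id_A$ only when $A$ is finitely generated projective over $B$. By Theorem \ref{smooth Hopf}, ${}_\varepsilon k$ is of type $FP_\infty$ over $B$ as soon as $B$ is homologically smooth, which is the situation in which the proposition is applied in the proof of Theorem \ref{th seq}.
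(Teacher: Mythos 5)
Your proof is correct and follows essentially the same route as the paper's: both realize the map as the composite of the isomorphism $u$ of Proposition \ref{prop : Bi 4} (with $X = \ext^q_B({}_\varepsilon k,B)$, $M=A$) with the map $\phi^q$ of Proposition \ref{prop : Bi 2}, use Proposition \ref{prop : Bi 3} for equivariance, and identify the composite with $[f]\otimes p(a)\mapsto p(a)\cdot[f]$ at the cochain level. Your observation that Proposition \ref{prop : Bi 2} requires ${}_\varepsilon k$ to be of type $FP_\infty$ over $B$ --- a hypothesis (equivalent to homological smoothness of $B$) not listed in the statement but satisfied wherever the proposition is invoked --- is apt, and the paper's own proof relies on the same unstated assumption.
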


\begin{proof}
    Using Proposition \ref{prop : Bi 4} with $X = \ext_B^q({}_\varepsilon k,B)$ and Proposition \ref{prop : Bi 2} for the flat left $B$-module $M = A$, we get the isomorphism $\ext^q_{B}(k_\varepsilon,B) \otimes H \simeq \ext_{B}^q(k_\varepsilon,A)$. 

    Let $P_\bullet$ be a projective resolution of ${}_\varepsilon k$ by $A$-modules, at the cochain level, we have for $a\in A,\ f\in \Hom_B(P_i,B)$ and $x\in P_i$   
    \begin{align*}
        \phi^i \circ u \biggl(( f \otimes p(a))\biggr) (x) &= \phi^i\biggl( f \leftharpoonup S^{-1}(a_{(2)}) \otimes a_{(1)}\biggr) (x) \\ 
        &= ( f \leftharpoonup S^{-1}(a_{(2)})) (x) a_{(1)}\\
        &= a_{(1)}f(S(a_{(2)})x) m \\
        &= (p(a) \cdot f) (x) 
    \end{align*}
    Hence, this isomorphism is given by the above formula and it is clearly $H$-linear which concludes the proof.
\end{proof}

Using this isomorphism of $H$-modules, we get the following isomorphisms between cohomology spaces.

\begin{prop}\label{iso seq}
   Let $k \longrightarrow B \overset{i}{\longrightarrow} A \overset{p}{\longrightarrow} H \longrightarrow k$ be an exact sequence of Hopf algebras with bijective antipodes such that $A$ is faithfully flat as a left and right $B$-module. Assume that $H$ and $B$ are smooth Hopf algebras. For $p,q\geq 0$, we have an isomorphism 
    $$E_2^{p,q} = \ext_{H}^p ({}_\varepsilon k, \ext_{B}^q({}_\varepsilon k,A)) \simeq \ext_{B}^q({}_\varepsilon k,B) \otimes \ext_{H}^p({}_\varepsilon k,H).$$
\end{prop}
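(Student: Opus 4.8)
The plan is to deduce this isomorphism from the $H$-module isomorphism already established in Proposition \ref{iso lemma hopf}, together with the fact that $\ext_H^p({}_\varepsilon k,-)$ commutes with arbitrary direct sums. The difficult structural work—recognizing $\ext_B^q({}_\varepsilon k,A)$ as a free $H$-module—has essentially been done in Propositions \ref{prop : Bi 2}, \ref{prop : Bi 4} and \ref{iso lemma hopf}, so what remains is a rather formal computation exploiting the smoothness of $H$.

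Concretely, the first step is to rewrite the coefficient module: Proposition \ref{iso lemma hopf} supplies an isomorphism of left $H$-modules $\ext_B^q({}_\varepsilon k,A) \simeq \ext_B^q({}_\varepsilon k,B) \otimes H$, in which $H$ acts only by left multiplication on the second tensor factor. Setting $V = \ext_B^q({}_\varepsilon k,B)$ and choosing a $k$-basis $(e_i)_{i\in I}$ of $V$ then exhibits $V\otimes H$ as the direct sum $\bigoplus_{i\in I} H$ of copies of the regular left $H$-module, the first factor contributing only multiplicities.

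The second step is to commute $\ext_H^p({}_\varepsilon k,-)$ past this direct sum. Since $H$ is homologically smooth, Theorem \ref{smooth Hopf} guarantees that the left $H$-module ${}_\varepsilon k$ is of type $FP$, hence of type $FP_\infty$, and Proposition \ref{FP ext} then ensures that $\ext_H^p({}_\varepsilon k,-)$ commutes with arbitrary direct sums. Combining the two steps yields
$$\ext_H^p({}_\varepsilon k, \ext_B^q({}_\varepsilon k,A)) \simeq \ext_H^p\Bigl({}_\varepsilon k, \bigoplus_{i\in I} H\Bigr) \simeq \bigoplus_{i\in I}\ext_H^p({}_\varepsilon k,H) \simeq V \otimes \ext_H^p({}_\varepsilon k,H),$$
which is the announced isomorphism.

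The main obstacle I anticipate is not the computation but the justification for passing through an infinite direct sum: $V = \ext_B^q({}_\varepsilon k,B)$ is a subquotient of a finitely generated, but generally infinite-dimensional, $B$-module, so it may well be infinite-dimensional over $k$. One therefore genuinely needs the $FP_\infty$ hypothesis on ${}_\varepsilon k$ over $H$ (rather than any finiteness of $V$) to interchange $\ext$ with the sum. A secondary point demanding care is that the splitting $V\otimes H \simeq \bigoplus_i H$ must respect the $H$-module structure; this is precisely why Proposition \ref{iso lemma hopf} is phrased at the level of $H$-modules with the multiplication action, and I would verify that the $H$-action there is indeed of the form ``multiplicity space tensor regular representation'' before decomposing the direct sum.
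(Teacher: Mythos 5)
Your proof is correct and follows essentially the same route as the paper: both apply Proposition \ref{iso lemma hopf} to replace the coefficient module by $\ext_B^q({}_\varepsilon k,B)\otimes H$ with $H$ acting by multiplication on the second factor, and then use the smoothness of $H$ together with Proposition \ref{FP ext} to commute $\ext_H^p({}_\varepsilon k,-)$ past the resulting direct sum of copies of $H$. Your explicit remarks on why the $FP_\infty$ hypothesis (rather than finite-dimensionality of $\ext_B^q({}_\varepsilon k,B)$) is what makes the interchange legitimate only make the argument more careful than the paper's.
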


\begin{proof}
    Using the isomorphism of $H$-modules in Proposition \ref{iso lemma hopf}, we have that for $p,q\geq 0$ $$E_2^{p,q}=\ext_{H}^p ({}_\varepsilon k, \ext_{B}^q({}_\varepsilon k,A)) \simeq \ext_{H}^p\left({}_\varepsilon k,\ext_{B}^q({}_\varepsilon k,B) \otimes H\right).$$
    Hence, using the smoothness of $H$, Proposition \ref{FP ext} and the fact that the $H$-module structure on $\ext_{B}^q({}_\varepsilon k,B) \otimes H$ is given by multiplication on $H$, we get the result.
\end{proof}

This allows to prove Theorem \ref{th seq}

\begin{proof}[Proof of Theorem \ref{th seq}]
     Let $k \longrightarrow B \overset{i}{\longrightarrow} A \overset{p}{\longrightarrow} H \longrightarrow k$ be an exact sequence of Hopf algebras with bijective antipodes such that $A$ is faithfully flat as a left and right $B$-module, and with $H$ and $B$ homologically smooth.
     
   Let us first assume that $B$ and $H$ have homological duality. 
    Using Theorem \ref{thm : smoothness}, we get that $A$ is homologically smooth, and (see Theorem \ref{duality Hopf}) we have to prove that $\ext_A^{n}({}_\varepsilon k,A) = 0$ if $n\neq \cd(A)$. 

    Since $H$ and $B$ have homological duality,  the isomorphism of Proposition \ref{iso seq} gives that $E_2^{p,q} =0$ if $p\neq \cd(H)$ or $q\neq \cd(B)$. Hence, $E_2^{\cd(H),\cd(B)}$ is the only non-zero space on the second page of the spectral sequence and we get that $\ext_A^{n}({}_\varepsilon k,A) = 0$ if $n \neq {\cd(B)+\cd(H)}$, while $\cd(A)=\cd(B)+\cd(H)$ by Theorem \ref{coro : cd equality}: we conclude that $A$ has homological duality.


    Conversely, assume that $A$ has homological duality. Let $r$ and $s$ be the smallest integers such that $\ext^r_{H}({}_\varepsilon k, H) \neq 0$ and $\ext_B^s({}_\varepsilon k,B) \neq 0$.
    Using the isomorphism of Proposition \ref{iso seq} and a corner argument, we get that $$E_2^{r,s} \simeq \ext_A^{r+s}({}_\varepsilon k,A) \simeq \ext_B^s({}_\varepsilon k,B) \otimes  \ext_{H}^r({}_\varepsilon k,H) \neq 0.$$ Hence, we have $\cd(A) = r+s$. We have $r\leq \cd(H)$ and $s\leq \cd(B)$, and $\cd(A) = \cd(B) + \cd(H)$ by Theorem \ref{coro : cd equality}, hence $r=\cd(H)$ and $s=\cd(B)$. We conclude from Theorem \ref{duality Hopf} that $H$ and $B$ have homological duality.
\end{proof}

We conclude this section with the case of twisted Calabi-Yau Hopf algebras.

\begin{coro}\label{coro:tCY}
    Let $k \longrightarrow B \overset{i}{\longrightarrow} A \overset{p}{\longrightarrow} H \longrightarrow k$ be an exact sequence of Hopf algebras with bijective antipodes and such that $A$ is faithfully flat as a left and right $B$-module. Assume that $H$ and $B$ are twisted Calabi-Yau. Then $A$ is twisted Calabi-Yau  in dimension $\cd(B) + \cd(H)$.
\end{coro}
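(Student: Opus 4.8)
The plan is to deduce the corollary from Theorem \ref{th seq} together with the characterization of the twisted Calabi-Yau property provided by Proposition \ref{Prop : CY Hopf}, so that essentially no new machinery is required. First I would observe that a twisted Calabi-Yau algebra has homological duality by definition, so both $B$ and $H$ have homological duality, in dimensions $\cd(B)$ and $\cd(H)$ respectively. Theorem \ref{th seq} then immediately yields that $A$ has homological duality in dimension $d := \cd(A) = \cd(B) + \cd(H)$. It therefore remains only to check that this duality is of twisted Calabi-Yau type, and by the converse part of Proposition \ref{Prop : CY Hopf} this amounts to proving that $\ext_A^d({}_\varepsilon k, A) \simeq k_\zeta$ as a right $A$-module, for some character $\zeta$ of $A$.

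Next I would reduce this to a dimension count. Since $B$ and $H$ are twisted Calabi-Yau, the direct part of Proposition \ref{Prop : CY Hopf} furnishes one-dimensional right modules $\ext_B^{\cd(B)}({}_\varepsilon k, B) \simeq k_{\xi_B}$ and $\ext_H^{\cd(H)}({}_\varepsilon k, H) \simeq k_{\xi_H}$. The key input is then the isomorphism of Proposition \ref{iso seq}, combined with the degeneration of the spectral sequence already recorded in the proof of Theorem \ref{th seq}: because $E_2^{\cd(H),\cd(B)}$ is the only nonzero term on the second page, the abutment in total degree $d$ gives
$$\ext_A^d({}_\varepsilon k, A) \simeq E_2^{\cd(H),\cd(B)} \simeq \ext_B^{\cd(B)}({}_\varepsilon k, B) \otimes \ext_H^{\cd(H)}({}_\varepsilon k, H).$$
As a tensor product of two one-dimensional spaces, the left-hand side is one-dimensional as a $k$-vector space.

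Finally I would conclude: a one-dimensional right $A$-module is automatically isomorphic to $k_\zeta$, where $\zeta : A \to k$ is the algebra character describing its (necessarily scalar) action. Hence $\ext_A^d({}_\varepsilon k, A) \simeq k_\zeta$ as a right $A$-module, and the converse part of Proposition \ref{Prop : CY Hopf} shows that $A$ is twisted Calabi-Yau, with Nakayama automorphism $\sigma_A(a) = \zeta(a_{(1)}) S^2(a_{(2)})$, in dimension $d = \cd(B) + \cd(H)$, as claimed.

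Since the result is a packaging of the tools built for Theorem \ref{th seq}, I do not expect a genuine obstacle; the only point deserving care is that one needs only one-dimensionality of $\ext_A^d({}_\varepsilon k, A)$ \emph{as a vector space}, the right $A$-module structure being then forced to have the form $k_\zeta$. The more laborious part, which is unnecessary for the statement as given, would be to make the character $\zeta$ (equivalently, the Nakayama automorphism of $A$) explicit in terms of $\xi_B$, $\xi_H$ and the data of the extension; this would require tracking the right $A$-module structures through the isomorphisms of Proposition \ref{iso lemma hopf} and Proposition \ref{iso seq}.
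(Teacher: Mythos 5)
Your proposal is correct and follows essentially the same route as the paper's own proof: both deduce homological duality of $A$ from Theorem \ref{th seq}, use the isomorphism $\ext^{\cd(A)}_A ({}_\varepsilon k,A) \simeq \ext^{\cd(B)}_B ({}_\varepsilon k,B) \otimes \ext^{\cd(H)}_H ({}_\varepsilon k,H)$ from that proof to conclude one-dimensionality, and then invoke the converse part of Proposition \ref{Prop : CY Hopf}. Your explicit remark that only one-dimensionality as a vector space is needed (the right $A$-module structure then being forced to be of the form $k_\zeta$) is a correct articulation of a step the paper leaves implicit.
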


\begin{proof}
    We already know Theorem \ref{th seq} that $A$ has homological duality in dimension $\cd(B) + \cd(H)$. Moreover we know fom the proof of Theorem \ref{th seq} that  \[\ext^{\cd(A)}_A ({}_\varepsilon k,A) \simeq \ext^{\cd(H)}_H ({}_\varepsilon k,H) \otimes \ext^{\cd(B)}_B ({}_\varepsilon k,B).\] By Proposition \ref{Prop : CY Hopf}, $\ext^{\cd(H)}_H ({}_\varepsilon k,H)$ and $\ext^{\cd(B)}_B ({}_\varepsilon k,B)$ are one-dimensional, hence $\ext^{\cd(A)}_A ({}_\varepsilon k,A)$ is one-dimensional and this concludes the proof, again using Proposition \ref{Prop : CY Hopf}.
\end{proof}

\section{Examples}

\subsection{The universal pointed Hopf algebra coacting on $k[x,y]$} 

First consider the algebra $H_1$ presented by generators $x,g, g^{-1}$ and relations $gg^{-1} = g^{-1}g = 1$, endowed with the Hopf algebra structure 
$$\Delta(x) = 1\otimes x + x\otimes g,\quad \Delta(g) = g\otimes g,\quad \varepsilon(x) = 0,\quad \varepsilon(g) = 1,\quad S(g) = g^{-1},\quad S(x) = -xg^{-1} .$$ 


\begin{prop}\label{duality H_1}
    The Hopf algebra $H_1$  has homological duality in dimension $1$.
\end{prop}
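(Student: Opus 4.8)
The plan is to work entirely at the level of the underlying algebra of $H_1$ and to produce a short explicit resolution of the trivial module. Since the only relations imposed on the generators are $gg^{-1}=g^{-1}g=1$, as an algebra $H_1$ is the free product (coproduct of $k$-algebras) $k[x] \ast k[g,g^{-1}]$ of the polynomial algebra and the Laurent polynomial algebra, both of which are domains of Hochschild cohomological dimension $1$. By Theorem~\ref{duality Hopf} it suffices to prove that $H_1$ is homologically smooth and that $\ext^i_{H_1}({}_\varepsilon k, H_1)=0$ for $i\neq 1$; I will obtain both from a single length-one finite free resolution of ${}_\varepsilon k$.

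First I would write down the sequence
\[ 0 \longrightarrow H_1\oplus H_1 \overset{\partial}{\longrightarrow} H_1 \overset{\varepsilon}{\longrightarrow} k \longrightarrow 0, \qquad \partial(a,b)=ax+b(g-1), \]
and check it is a resolution of ${}_\varepsilon k$. Exactness at $H_1$ is the statement that the augmentation ideal $\ker\varepsilon$ equals the left ideal $H_1 x + H_1(g-1)$: this follows from the telescoping identity expressing any $s_1\cdots s_n-\varepsilon(s_1\cdots s_n)$ as a left combination of the $s_j-\varepsilon(s_j)$, together with $g^{-1}-1=-g^{-1}(g-1)$. The essential point is the injectivity of $\partial$, i.e. that $H_1x$ and $H_1(g-1)$ are each free of rank one and meet only in $0$. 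Freeness of rank one holds because $x$ and $g-1$ are non-zero-divisors in the domain $H_1$; the direct-sum property is the instance, for the two factors $k[x]$ and $k[g,g^{-1}]$, of the standard exact sequence $0\to \bigoplus_j A\otimes_{A_j}\overline{A_j}\to A\to k\to 0$ attached to a free product $A=A_1\ast A_2$ of augmented algebras, and can be verified directly on the reduced-word basis of $H_1$. I expect this injectivity (equivalently, the freeness of the augmentation ideal as a left module) to be the main obstacle, as it is precisely the step where the free-product structure, rather than just the two factors, is used.

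Granting the resolution, ${}_\varepsilon k$ is of type $FP$ with $\pdim_{H_1}({}_\varepsilon k)\leq 1$, so $H_1$ is homologically smooth by Theorem~\ref{smooth Hopf} and $\cd(H_1)\leq 1$. To finish I would compute $\ext^\bullet_{H_1}({}_\varepsilon k,H_1)$ by applying $\Hom_{H_1}(-,H_1)$ to the resolution, obtaining the complex of right modules
\[ 0\longrightarrow H_1 \overset{\partial^{*}}{\longrightarrow} H_1\oplus H_1\longrightarrow 0, \qquad \partial^{*}(c)=(xc,\,(g-1)c). \]
Since $H_1$ is a domain, $\partial^{*}$ is injective, whence $\ext^0_{H_1}({}_\varepsilon k,H_1)=0$, while $\ext^i_{H_1}({}_\varepsilon k,H_1)=0$ for $i\geq 2$ by the length of the resolution. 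Finally $\ext^1_{H_1}({}_\varepsilon k,H_1)=\operatorname{coker}\partial^{*}$ is nonzero: applying $\varepsilon\oplus\varepsilon$ and using $\varepsilon(xc)=\varepsilon((g-1)c)=0$ shows $\im\partial^{*}\subseteq\ker(\varepsilon\oplus\varepsilon)$, so the cokernel surjects onto $k^{2}$. (Alternatively, smoothness forces $\pdim_{H_1}({}_\varepsilon k)=1$ via Proposition~\ref{prop:FP-FPinfty}.) Thus $\ext^i_{H_1}({}_\varepsilon k,H_1)=0$ precisely for $i\neq 1$, and Theorem~\ref{duality Hopf} yields that $H_1$ has homological duality in dimension $1$.
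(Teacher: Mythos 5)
Your proof is correct and follows essentially the same route as the paper: both exhibit the augmentation ideal as a free module of rank two on $x$ and $g-1$, obtaining a length-one finite free resolution of the trivial module, and then combine smoothness with the vanishing of $\ext^0_{H_1}({}_\varepsilon k,H_1)$ via Theorem~\ref{duality Hopf}. You merely supply more detail than the paper does (the free-product justification of the direct-sum decomposition and the explicit non-vanishing of $\ext^1$), which is welcome but not a different argument.
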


\begin{proof}
   We have  $H_1^+ = (g-1) H_1 \oplus x H_1$, hence $H_1^+$ is a finitely generated free $H_1$-module and $0 \to H_1 ^+ \to H_1  \to k \to 0$ 
    is a finitely generated free resolution of $_\varepsilon k$. Hence by Theorem \ref{smooth Hopf} the Hopf algebra $H_1$ is smooth, and since
    moreover it is clear that $\ext^0({}_\varepsilon k,H_1) \simeq \Hom_{H_1}({}_\varepsilon k, H_1) = 0$, Theorem \ref{duality Hopf} gives the result.
\end{proof}

We now define $H_2$ to be the algebra presented by generators $u_{11}, u_{12}, u_{22}$ with $u_{11}$ central in $H_2$ and $u_{11}$ and $u_{22}$ invertible and we endow it with the Hopf algebra structure 
$$\Delta(u_{11}) = u_{11}\otimes u_{11}, \quad \Delta(u_{12}) = u_{11}\otimes u_{12} + u_{12}\otimes u_{22}, \quad \Delta(u_{22}) = u_{22} \otimes u_{22}, \quad \varepsilon(u_{ij}) = \delta_{ij}.$$ 
The Hopf algebra $H_2$ is pointed since it is generated by group-like and skew-primitive elements.

There has been recently some interest on coactions of pointed algebras on commutative algebras, see \cite{krahmer_nodal_2017,brown_plane_2022}.
In this setting, the motivation for introducing the above Hopf algebra $H_2$ is that it is in some sense a universal pointed Hopf algebra coacting on $k[x,y]$ in a grading preserving way, as follows.

\begin{prop}
There is a grading preserving $H_2$-comodule algebra structure  on $k[x,y]$ defined by
\begin{align*}
    \rho : k[x,y] \ & \longrightarrow k[x,y] \otimes H_2 \\
x , \ y & \longmapsto x\otimes u_{11}, \ x\otimes u_{12} + y\otimes u_{22}  
\end{align*}
If $A$ is a pointed Hopf algebra coacting on $k[x,y]$ in a grading preserving way,
 then there exist  an $H_2$-comodule algebra structure given by a  coaction $\rho' : k[x,y] \to k[x,y] \otimes H_2$, and a Hopf algebra map $f :  H_2 \to A$ making the following diagram commute
$$\xymatrix{k[x,y] \ar[r]^-{\rho'} \ar[dr]_{\theta} & k[x,y] \otimes  H_2 \ar[d]^{{\rm id}\otimes f}\\
& k[x,y] \otimes A & 
}$$
where $\theta$ denotes the given coaction of  $A$.
\end{prop}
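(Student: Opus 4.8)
The plan is to handle the two assertions separately: the existence of $\rho$ is a direct verification, while the universal property rests on the comodule theory of pointed coalgebras.

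For the first assertion, I would use that $k[x,y]$ is the free commutative algebra on the degree-one space $V = kx \oplus ky$. Thus it suffices to prescribe $\rho$ on $x$ and $y$ and to check that the stated images commute in $k[x,y]\otimes H_2$; this commutativity is immediate from the centrality of $u_{11}$. Coassociativity $(\rho\otimes\id)\circ\rho = (\id\otimes\Delta)\circ\rho$ and counitality $(\id\otimes\varepsilon)\circ\rho=\id$ are then checked on the generators $x,y$ using the comultiplication and counit of $H_2$, and grading preservation is clear since $\rho(V)\subseteq V\otimes H_2$.

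For the universal property, grading preservation of $\theta$ guarantees $\theta(V)\subseteq V\otimes A$, so that $V$ becomes a two-dimensional right $A$-comodule. The key step, and the only place pointedness is used, is the following: since $A$ is pointed, every simple $A$-comodule is one-dimensional, so the finite-dimensional comodule $V$ has a nonzero socle and hence admits a one-dimensional subcomodule $L = kx''$. A one-dimensional comodule is determined by a group-like element, so $\theta(x'')=x''\otimes g$ with $g$ group-like. Completing $x''$ to a basis $(x'',y'')$ of $V$, the comodule axioms force $\theta(y'')=x''\otimes c + y''\otimes g'$ with $g'$ group-like and $\Delta(c)=g\otimes c + c\otimes g'$, $\varepsilon(c)=0$; that is, $c$ is a $(g,g')$-skew primitive element. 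This is exactly the coalgebra pattern encoded by $u_{11},u_{22},u_{12}$ in $H_2$, which is why one must first put the coaction in upper-triangular form.

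It then remains to build $\rho'$ and $f$ and to check compatibility. I would define $\rho'$ by the formulas of $\rho$ transported to the basis $(x'',y'')$, namely $\rho'(x'')=x''\otimes u_{11}$ and $\rho'(y'')=x''\otimes u_{12}+y''\otimes u_{22}$; since $(x'',y'')$ freely generate $k[x,y]$ as a commutative algebra, the verification that $\rho'$ is a grading-preserving $H_2$-comodule algebra is identical to that for $\rho$. I would set $f(u_{11})=g$, $f(u_{22})=g'$, $f(u_{12})=c$. To see that $f$ is a well-defined algebra map, I use that $g,g'$ are group-like, hence invertible, and that applying $\theta$ to the relation $x''y''=y''x''$ yields $gc=cg$ and $gg'=g'g$, which is precisely the statement that the image of the central generator $u_{11}$ is central among the generator images. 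That $f$ is a coalgebra map is read off from the comultiplications and counits computed above, and since a bialgebra map between Hopf algebras automatically commutes with the antipodes, $f$ is a Hopf algebra map. Finally $(\id\otimes f)\circ\rho'=\theta$ holds on $x''$ and $y''$ by construction, and as both sides are algebra maps out of $k[x,y]$ they agree everywhere, giving the commuting triangle. I expect the extraction of the one-dimensional subcomodule from pointedness to be the conceptual heart of the proof; once that is in place, the remaining steps are bookkeeping matching the defining relations and coalgebra structure of $H_2$.
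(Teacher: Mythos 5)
Your proposal is correct and follows essentially the same route as the paper: existence of $\rho$ from centrality of $u_{11}$ checked on generators, pointedness yielding a one-dimensional subcomodule of $V$ and hence an upper-triangular basis, the algebra-map constraint on $x''y''=y''x''$ giving the needed commutation relations, and then defining $f$ on the generators. You merely spell out the "direct verification" the paper leaves implicit (group-likeness of $g,g'$, skew-primitivity of $c$, and the antipode compatibility), which is all accurate.
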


\begin{proof}
The existence of the announced algebra map $\rho$ follows from the centrality of $u_{11}$, and it is clear that $\rho$ defines a grading preserving right $H_2$-comodule structure on $k[x,y]$.   

Now assume given a pointed Hopf algebra $A$  coacting on $k[x,y]$ in a grading preserving way. Then $V=kx\oplus ky$ is a subcomodule, and since $A$ is pointed, has a one-dimensional subcomodule.
Hence there exists a basis $\{x',y'\}$ of $V$ (with $k[x,y]=k[x',y']$) such that $\theta(x')=x'\otimes g$ and $\theta(y')= x'\otimes z + y'\otimes h$ for some $g,h,z\in A$. Since $\theta$ is an algebra map and $\{x'^2, x'y'\}$ is a linearly independent set, we see that $g$ commutes with $z$ and $h$. Defining the coaction $\rho'$ in the same way as $\rho$ before (replacing $x,y$ by $x',y'$), it is then a direct verification that there exists a unique Hopf algebra map $f : H_2 \to A$ making the above diagram commute.
\end{proof}

We now prove that $H_2$ has homological duality in dimension $2$ using Theorem \ref{th seq}.

\begin{prop}
    The Hopf algebra $H_2$ has homological duality in dimension $2$. 
\end{prop}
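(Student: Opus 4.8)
The plan is to realize $H_2$ as the middle term of an exact sequence of Hopf algebras
\[
k \longrightarrow B \longrightarrow H_2 \longrightarrow H_1 \longrightarrow k
\]
with $B\cong k[\mathbb{Z}]$ the Laurent polynomial algebra, and then invoke Theorem \ref{th seq}. First I would single out the central group-like $u_{11}$. Since $u_{11}$ is group-like, invertible and central, the subalgebra $B := k[u_{11}^{\pm 1}]$ it generates is a commutative Hopf subalgebra of $H_2$ isomorphic to the group algebra $k[\mathbb{Z}]$. Centrality of $u_{11}$ gives at once $B^+H_2 = H_2 B^+$, so Remark \ref{rem:exact}(1) will produce the desired exact sequence once faithful flatness is established, with quotient $H_2/B^+H_2 = H_2/(u_{11}-1)$. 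A direct inspection of the presentation shows this quotient is the Hopf algebra generated by the images of $u_{12}$ and $u_{22}$, with $\Delta(\overline{u_{12}}) = 1\otimes \overline{u_{12}} + \overline{u_{12}}\otimes \overline{u_{22}}$ and $\overline{u_{22}}$ group-like invertible; that is, $H_2/B^+H_2 \cong H_1$ via $\overline{u_{12}}\mapsto x$ and $\overline{u_{22}}\mapsto g$.

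Next I would check the standing hypotheses of Theorem \ref{th seq}. The antipode of $H_2$ is bijective: on generators one computes $S(u_{11})=u_{11}^{-1}$, $S(u_{22})=u_{22}^{-1}$ and $S(u_{12}) = -u_{11}^{-1}u_{12}u_{22}^{-1}$, whence $S^2$ is the inner (hence bijective) algebra automorphism fixing $u_{11},u_{22}$ and sending $u_{12}\mapsto u_{22}u_{12}u_{22}^{-1}$, namely conjugation by $u_{22}$; since $S^2$ is bijective, so is $S$. The key technical point is faithful flatness of $H_2$ over $B$ on both sides. This follows from the algebra isomorphism $H_2 \cong k[u_{11}^{\pm 1}] \otimes H_1$ (an isomorphism of algebras, not of Hopf algebras, since $\Delta(u_{12})$ involves $u_{11}$ rather than $1$): because $u_{11}$ is central and subject to no further relation, every element of $H_2$ is uniquely a finite sum $\sum_n u_{11}^n w_n$ with the $w_n$ in the span of the words in $u_{12}, u_{22}^{\pm 1}$, and that span is a copy of $H_1$. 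Consequently $H_2$ is free, hence faithfully flat, as a left and as a right $B$-module. I expect this faithful flatness verification to be the main, though essentially routine, obstacle.

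Finally I would feed in the duality inputs. By Proposition \ref{duality H_1}, $H_1$ has homological duality in dimension $1$. For $B = k[\mathbb{Z}]$ the same argument as in Proposition \ref{duality H_1} applies: the sequence $0 \to B \xrightarrow{\,u_{11}-1\,} B \to {}_\varepsilon k \to 0$ is a finite free resolution of ${}_\varepsilon k$, so $B$ is smooth by Theorem \ref{smooth Hopf}, and since $\ext^0_B({}_\varepsilon k, B)=0$, Theorem \ref{duality Hopf} gives that $B$ has homological duality in dimension $1$. Both $B$ and $H_1$ being smooth with homological duality, Theorem \ref{th seq} applies to the exact sequence above and yields that $H_2$ has homological duality, with $\cd(H_2)=\cd(B)+\cd(H_1)=1+1=2$.
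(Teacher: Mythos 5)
Your proposal is correct and follows essentially the same route as the paper: the same exact sequence $k \to k\mathbb{Z} \to H_2 \to H_1 \to k$ built from the central group-like $u_{11}$, the same freeness (hence faithful flatness) verification via a normal-form/basis argument, and the same application of Theorem \ref{th seq} with both inputs having homological duality in dimension $1$. The explicit check that the antipode of $H_2$ is bijective is a small addition the paper leaves implicit, but otherwise the two arguments coincide.
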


\begin{proof}
    Consider the sequence $k \to k\mathbb{Z} \overset{i}{\to} H_2 \overset{p}{\to} H_1 \longrightarrow k$
    where $i$ and $p$ are the Hopf algebra maps defined by 
    $i : k\mathbb{Z} \to H_2$, $t\mapsto u_{11}$ and 
    \begin{align*}
    p :  H_2 &\longrightarrow H_1\\
 u_{11}, u_{12}, u_{22}  & \longmapsto 1, x, g
\end{align*}
Using the diamond lemma we easily see that $i$ is injective, and the centrality of $u_{11}$ ensures that $p$ is the quotient map of $H_2$ by $i(k\mathbb Z)^+H_2=H_2i(k\mathbb Z)^+$. Moreover, using again the diamond lemma, we see
that $H_2$ is free as a left or right $i(k\mathbb Z)$-module, hence faithfully flat and by \cite[Proposition 3.4.3]{montgomery_hopf_1993}, we conclude that 
    this is an exact sequence of Hopf algebras. 
    It is well known that $k\mathbb{Z}$ has homological duality in dimension $1$ and Proposition \ref{duality H_1} assure that $H_1$ has homological duality in dimension $1$.  We conclude using Theorem \ref{th seq}.
\end{proof}

\subsection{Quantum groups of $GL(2)$ representation type}

In this subsection, we use Corollary \ref{coro:tCY} to show that the Hopf algebras $\mathcal{G}(E,E^{-1})$ introduced in \cite{mrozinski_quantum_2014} are twisted Calabi-Yau in dimension $4$. This was known from \cite{wang_calabi-yau_2023}, but the argument given here, based on a appropriate exact sequence, is shorter.

Let $n\geq 2$ and let $A,B\in GL_n(k)$. Consider the universal algebra $\mathcal{G}(A,B)$ with generators $(x_{ij})_{1\leq i,j\leq n}$, $d, d^{-1}$ satisfying the relations $$x^t A x = Ad, \quad xBx^t = Bd,\quad dd^{-1}=1 = d^{-1}d $$ 
where $x$ is the matrix $(x_{ij})_{1\leq i,j \leq n}$.
It can be endowed with the following  Hopf algebra structure $$\Delta(x_{ij}) = \sum\limits_{k=1}^n x_{ik}\otimes x_{kj}, \quad \Delta(d^{\pm}) = d^{\pm}\otimes d^{\pm},$$
$$\varepsilon(x_{ij}) = \delta_{ij}, \quad \varepsilon(d^{\pm}) = 1,\quad S(x) = d^{-1}A^{-1}x^t A,\quad S(d^{\pm}) = d^{\mp}. $$
For $q\in k^*$ and $A_q = \begin{pmatrix} 0 &1 \\ -q &0\end{pmatrix} \in GL_2(k)$, we have $\mathcal{G}(A_q,A_q) = \mathcal{O}(GL_q(2))$.

\begin{prop}
    Let $n\geq 2$ and $E \in GL_n(k)$. The Hopf algebra $\mathcal{G}(E,E^{-1})$ is a twisted Calabi-Yau algebra of dimension $4$.
\end{prop}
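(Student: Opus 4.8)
The plan is to realise $\mathcal{G}(E,E^{-1})$ as the middle term of an exact sequence of Hopf algebras whose outer terms are twisted Calabi-Yau of dimensions $1$ and $3$, and then to invoke Corollary \ref{coro:tCY}; this runs exactly parallel to the treatment of $H_2$ above. The element $d$ is group-like, with $\Delta(d)=d\otimes d$, $\varepsilon(d)=1$ and $S(d)=d^{-1}$, and the defining relations $x^tEx = Ed$, $xE^{-1}x^t = E^{-1}d$ force $d$ to be central (just as the quantum determinant is central in $\mathcal{O}(GL_q(2))$). Thus $B:=k\langle d,d^{-1}\rangle \simeq k\mathbb{Z}$ is a central Hopf subalgebra, and I would set $H:=\mathcal{G}(E,E^{-1})/(d-1)$, the ``special'' quotient obtained by imposing $d=1$, whose defining relations become $x^tEx=E$ and $xE^{-1}x^t=E^{-1}$. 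The target is the exact sequence
$$k \longrightarrow k\mathbb{Z} \overset{i}{\longrightarrow} \mathcal{G}(E,E^{-1}) \overset{p}{\longrightarrow} H \longrightarrow k, \qquad i(t)=d.$$

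To establish exactness and the hypotheses of Corollary \ref{coro:tCY}, I would first note that $\mathcal{G}(E,E^{-1})$ has bijective antipode (here $S^2$ is the conjugation automorphism governed by the matrix $E$, so $S$ is invertible). Centrality of $d$ gives $B^+\mathcal{G}(E,E^{-1}) = \mathcal{G}(E,E^{-1})B^+$, so it remains to check that $i$ is injective and that $\mathcal{G}(E,E^{-1})$ is faithfully flat over $k\mathbb{Z}$. As in the proof for $H_2$, both follow from a Bergman diamond-lemma analysis: ordering the generators so that the reduction system coming from the quadratic relations together with the central invertible $d$ yields a PBW-type basis which exhibits $\mathcal{G}(E,E^{-1})$ as a free $k\langle d,d^{-1}\rangle$-module on both sides. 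Freeness gives faithful flatness, and by Remark \ref{ff=proj} this is all that is needed; then \cite[Proposition 3.4.3]{montgomery_hopf_1993} (or Remark \ref{rem:exact}(1)) confirms that the displayed sequence is exact.

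Finally, $k\mathbb{Z}$ is Calabi-Yau of dimension $1$, and the special quotient $H$ is of $SL(2)$-representation type: such Hopf algebras are twisted Calabi-Yau of dimension $3$, being monoidally co-equivalent to $\mathcal{O}(SL_q(2))$, whose homological properties are by now classical (cf. \cite{mrozinski_quantum_2014}). Granting this, Corollary \ref{coro:tCY} yields at once that $\mathcal{G}(E,E^{-1})$ is twisted Calabi-Yau of dimension $\cd(k\mathbb{Z}) + \cd(H) = 1 + 3 = 4$, recovering the result of \cite{wang_calabi-yau_2023}. I expect the substantive input to be precisely the fact that the special quotient $H$ is twisted Calabi-Yau of dimension $3$: verifying centrality of $d$, injectivity of $i$ and faithful flatness are routine diamond-lemma computations, whereas the dimension-$3$ Calabi-Yau property of $H$ is the structural fact on which the entire reduction rests.
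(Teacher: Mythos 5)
Your proposal is correct and follows essentially the same route as the paper: the quotient $H=\mathcal{G}(E,E^{-1})/(d-1)$ you construct is exactly the Hopf algebra $\mathcal{B}(E)$ of \cite{dubois-violette_quantum_1990} used in the paper, the exactness and faithful flatness are established by the same diamond-lemma/centrality-of-$d$ argument, and the key external input (that this dimension-$3$ quotient is twisted Calabi--Yau, proved in \cite{bichon_hochschild_2013} via the monoidal equivalence with $\mathcal{O}(SL_q(2))$ that you allude to) is the same before applying Corollary \ref{coro:tCY}.
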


\begin{proof}
    Let $\mathcal{B}(E)$ (\cite{dubois-violette_quantum_1990}) be the Hopf algebra presented by generators $(u_{ij})_{1\leq i,j\leq n}$ and relations $$E^{-1}u^t E u= I_n =  u E^{-1} u^t E.$$ 
    We consider the sequence of Hopf algebra maps $$k \longrightarrow k\mathbb{Z} \overset{\iota}{\longrightarrow} \mathcal{G}(E,E^{-1}) \overset{\pi}{\longrightarrow} \mathcal{B}(E) \longrightarrow k$$ 
    where $\iota : k\mathbb{Z} \to \mathcal{G}(E,E^{-1})$ and $\pi : \mathcal{G}(E,E^{-1}) \to \mathcal{B}(E)$ are defined by $$\iota(t) = d,\quad \pi(x_{ij}) = u_{ij}\quad {\rm and } \quad\pi(d^\pm) = 1,$$ where $t$ is the generator of $\mathbb{Z}$. We easily see that $d$ is central in $\mathcal{G}(E,E^{-1})$, hence $\iota(k\mathbb{Z})^+ \mathcal{G}(E,E^{-1}) = \mathcal{G}(E,E^{-1})\iota(k\mathbb{Z})^+$ and that $\pi$ is the quotient map of $\mathcal{G}(E,E^{-1})$ by $\mathcal{G}(E,E^{-1})\iota(k\mathbb{Z})^+$. 
    

Reasoning as in \cite[Proposition 11.6]{mrozinski:tel-00948512}, we can get a linear basis of $\mathcal{G}(E,E^{-1})$ using the diamond lemma \cite{bergman78}. We order the set $\{1,\dots, \ n\} \times \{1,\dots, \ n\}$ lexicographically and we set $$Ind(E) = \left\{ I=\bigl((i_1,j_1), \dots,\ (i_m,j_m)\bigr), (i_t,i_{t+1}) \neq (n,u) \ {\rm and} \ (j_t,j_{t+1}) \neq (n,v)\right\}$$ where $(n,u)$ and $(n,v)$ are the maximal elements of $\{1,\dots, \ n\} \times \{1,\dots, \ n\}$ such that $E_{nu}\neq 0$ and $(E^{-1})_{nv}\neq 0$.
Using a suitable presentation of $\mathcal{G}(E,E^{-1})$ and the diamond lemma, we get that the family $\left(d^k x_I \right)_{k\in \mathbb{Z}, I \in Ind(E)}$ is a linear basis of $\mathcal{G}(E,E^{-1})$. In particular $\iota$ is injective and we get that $\mathcal{G}(E,E^{-1})$ is free hence faithfully flat as a left or right module over $\iota(k\mathbb{Z})$. By Remark \ref{rem:exact} we conclude that our sequence is an exact sequence of Hopf algebras.

It is known \cite{bichon_hochschild_2013} that $\mathcal{B}(E)$ is twisted Calabi-Yau in dimension $3$,  hence, using Corollary \ref{coro:tCY}, we get the result.
\end{proof}

\begin{rem}
    In fact, as mentioned before it was recently shown in \cite{wang_calabi-yau_2023}, by computing an explicit resolution of $k$ by free $\mathcal{G}(A,B)$ Yetter-Drinfeld modules that, for $B^tA^tBA\in k^*$, the algebras $\mathcal{G}(A,B)$ are twisted Calabi-Yau in dimension $4$.
\end{rem}

\bibliography{exact_sequences}

\end{document}